\definecolor{codegreen}{rgb}{0,0.6,0}
\definecolor{codegray}{rgb}{0.5,0.5,0.5}
\definecolor{codepurple}{rgb}{0.58,0,0.82}
\lstdefinestyle{mystyle}{
	basicstyle=\tiny,
	%backgroundcolor=\color{backcolour},   
	commentstyle=\color{codegreen},
	keywordstyle=\color{blue},
	numberstyle=\tiny\color{codegray},
	stringstyle=\color{codepurple},
	breakatwhitespace=false,         
	breaklines=true,                 
	captionpos=b,                    
	keepspaces=true,                 
	numbers=left,                    
	numbersep=5pt,                  
	showspaces=false,                
	showstringspaces=false,
	showtabs=false,                  
	tabsize=2
}
\newcommand{\R}{\mathbb{R}}
\newcommand{\pa}{\partial}
\newcommand{\ve}{\varepsilon}
\newcommand{\vp}{\varphi}
\newcommand{\md}{\,\mathrm{d}}
\newcommand{\supp}{\operatorname{supp}}
\newtheorem{theorem}{Theorem}
\newtheorem{lemma}[theorem]{Lemma}
\newtheorem{cor}[theorem]{Corollary}
\theoremstyle{definition}
\newtheorem{rem}{Remark}
\newtheorem{assumption}{Assumption}
\newcommandx{\unsure}[2][1=]{\todo[linecolor=red,backgroundcolor=red!25,bordercolor=red,#1]{#2}}
\newcommandx{\change}[2][1=]{\todo[linecolor=blue,backgroundcolor=blue!25,bordercolor=blue,#1]{#2}}
\newcommandx{\info}[2][1=]{\todo[linecolor=green,backgroundcolor=green!25,bordercolor=green,#1]{#2}}
\newcommandx{\improvement}[2][1=]{\todo[linecolor=yellow,backgroundcolor=yellow!25,bordercolor=yellow,#1]{#2}}
\newcommandx{\biblio}[2][1=]{\todo[linecolor=blue,backgroundcolor=magenta!25,bordercolor=blue,#1]{#2}}
\newcommandx{\laura}[2][1=]{\todo[linecolor=violet,backgroundcolor=violet!25,bordercolor=violet,#1]{#2}}
\definecolor{darkblue}{rgb}{0,0,0.7}
\begin{document}
	\title{Boltzmann mean-field game model for knowledge growth: limits to learning and general utilities}
	\author{Martin Burger\thanks{Computational Imaging Group and Helmholtz Imaging, Deutsches Elektronen-Synchrotron (DESY), Notkestr. 85, 22607 Hamburg and Fachbereich Mathematik, Universit\"at Hamburg. \tt{martin.burger@desy.de}} \and L. Kanzler\thanks{CEREMADE, UMR 7534, Universite Paris–Dauphine, Place du Marechal de Lattre de Tassigny, 75775 Paris Cedex 16, France. {\tt laura.kanzler@dauphine.psl.eu}} \and M.T. Wolfram\thanks{Mathematics Institute, University of Warwick, Coventry CV4 7AL, United Kingdom \tt{M.Wolfram@warwick.ac.uk}}}
	\date{\vspace{-5ex}}
	\maketitle
	
	\begin{abstract}
    In this paper we investigate a generalisation of a Boltzmann mean field game (BMFG) for knowledge growth, originally introduced by the economists Lucas and Moll \cite{ML}. In BMFG the evolution of the agent density with respect to their knowledge level is described by a Boltzmann equation. Agents increase their knowledge through binary interactions with others; their increase is modulated by the interaction and learning rate: Agents with similar knowledge learn more in encounters, while agents with very different levels benefit less from learning interactions. The optimal fraction of time spent on learning is calculated by a Hamilton-Jacobi-Bellman equation, resulting in a highly nonlinear forward-backward in time PDE system.\\
    The structure of solutions to the Boltzmann and Hamilton-Jacobi-Bellman equation depends strongly on the learning rate in the Boltzmann collision kernel as well as the utility function in the Hamilton-Jacobi-Bellman equation. In this paper we investigate the monotonicity behaviour of solutions for different learning and utility functions, show existence of solutions and investigate how they impact the existence of so-called balanced growth path solutions, that relate to exponential growth of the overall economy. Furthermore we corroborate and illustrate our analytical results with computational experiments. 
	\end{abstract}
	
	\begin{keywords}
    Boltzmann-type equation, Hamilton-Jacobi-Bellman equation, mean-field games
	\end{keywords}
	
	\vspace{0.5cm}
	
	\textbf{\textit{AMS subject classification:}} 35Q89, 35Q20, 35Q91, 49J20, 49N90, 70H20 \\
	
	\vspace{0.5cm}
	
	\section{Introduction}
	
	In this paper we investigate a Boltzmann mean field game (BMFG) model for knowledge growth in large societies, originally introduced by Lucas and Moll \cite{ML}. In this model agents are characterised by their knowledge level $z$ (in $\Omega = \mathbb{R}_+$ or $\Omega = [0, \bar{z}]$ with $\bar{z} \in \mathbb{R}^+$), and the time they spend on learning/increasing their knowledge level. Agents learn through interactions with others, and determine the best amount of learning time solving an optimal control problem. This leads to a coupling of a Boltzmann type equation, describing the interactions of agents, to a mean field game; hence a Boltzmann mean-field game. There has been a significant interest in the analysis and simulation of BMFGs in the last years, see \cite{BLW, BLW2, PR, PRV}. \\ 
	Boltzmann type equations have been used successfully to describe the interactions of large interacting agent systems in socio-economic sciences, for example in wealth distribution \cite{CC2000, pareschi2014wealth, MT2008}, price formation \cite{BCMW2013}, opinion formation \cite{T2006,During:strongleaders} or ranking in sports \cite{JJ2015}; see \cite{PT2013} for a more general introduction to kinetic models. Mean-field games were originally introduced by Lasry and Lions \cite{LL2007} and Caines et al \cite{CHM2006}; an introduction from the PDE perspective can be found in \cite{CP2020}. Classical mean-field games correspond to the Nash equilibrium configuration of a differential game of infinitely many players. It consists of a forward Fokker-Planck equation describing the evolution of the player distribution, which is coupled to a backward Hamilton-Jacobi-Bellman equation for the value function of a single player. For certain types of cost the coupled MFG system can be interpreted as a parabolic optimal control problem, so called variational or potential mean field games, see for example \cite{Benamou2017}. Potential mean-field games have a similar structure as the Benamou-Brenier formulation for optimal transport, see \cite{BB2000}. We will see in Section \ref{s:lmfg} that the proposed BMFG converges (formally) to a specific type of potential MFG in a suitable scaling limit; establishing a first formal connection between BMFGs and MFGs.\\
	Note that the form of the Boltzmann equation in BMFGs resembles to some extend the one from the Smoluchowski coagulation equations, see \cite{S1916}. However, the equation's fundamental structure is different since in the case of the Smoluchowski coagulation models symmetric and homogeneous kernels are considered. In our case we either consider an homogeneous, with index 0, but not symmetric kernel or a symmetric kernel without any homogeneity property. Moreover, the aforesaid coagulation equation models the fusion of a particle with size $y$ with one of size $x-y$, resulting in the gain of a particle of size $x$. Hence, the sum of the states remains constant within each binary interaction, which causes together with the symmetry property of the kernel that the mean value is conserved. This is not the case in our model since during a binary interaction the individual with smaller knowledge increases it, while the on of the interaction-partner remains unchanged, which causes increase of the mean with time. Therefore, analytical techniques and results as developed in \cite{FL2005, NV2013} can not be applied.

	\subsection{The Boltzmann mean-field game}
	Lucas and Moll proposed several generalisations of the proposed basic BMFG model in \cite{ML}. In this paper we will investigate two of them - how limits to learning as well as the change of the utility function effects the qualitative behaviour of solutions. \\
	
	We start by considering limits to learning and recall that agents are characterised by their knowledge level $z \in \Omega$. In the original BMFG model, agents learned from others with a higher knowledge level. Now we assume that an agent with knowledge level $z$ interacts with another individual of lower knowledge level $y$ with a certain rate
	$$
	k(z,y),
	$$
	which is assumed to be decreasing as the difference between the knowledge levels increase. Hence agents learn less, if the difference in the knowledge level is too big. We will focus, among the more general case, on two special cases of the interaction function, a polynomial kernel 
	\begin{align}\label{e:kpoly}
		k(z,y) = \delta + (1-\delta) \left(\frac{y}{z}\right)^{\kappa} 
	\end{align}
	for $\delta \in (0,1)$ and $\kappa > 0$ and an exponential one
	\begin{align}
		\label{e:kexp}
		k(z,y) = \mu e^{-\kappa \lvert z - y \rvert} \text{ with } \mu, \kappa >0.
	\end{align}
	In case of the polynomial kernel, agents always learn at the minimum rate $\delta$, no matter how big the knowledge gap is. This is not the case for the second choice \eqref{e:kexp}, in which the knowledge gain in interactions approaches zero exponentially fast as the difference in knowledge grows. We will see that the different decay behaviour of the kernel $k$ can have an influence on the long time behaviour. In particular, we are able to show that balanced growth path (BGP) solutions might degenerate to constant states in the case of the exponential kernel \eqref{e:kexp}. We will discuss BGPs in more detail in Section \ref{s:bgp}.\\
	Let $f = f(z,t)$ denote the distribution of agents with respect to the knowledge level. Then the evolution of agents can be described by a Boltzmann equation of the form
	\begin{equation}\label{BoE}
		\pa_t f(z,t) = f(z,t) \int_0^z \alpha\left(s(y,t)\right)k(z,y)f(y,t) \, \md y-\alpha\left(s(z,t)\right) f(z,t) \int_z^{\infty} k(y,z)f(y,t) \, \md y,
	\end{equation}
	where $\alpha=\alpha(s(z,t)): [0,1] \to [0,1]$ denotes the learning rate. 
	The function $s=s(z,t) \in [0,1]$ corresponds to the fraction of time an individual with knowledge $z$ spends on learning. The first term on the right-hand-side of \eqref{BoE} are the gains due to interactions of an agent with knowledge level $z$ with others having a lower knowledge level. The second term is the loss due to interactions of agents with knowledge level $z$ with others having a higher knowledge level (at rate $\alpha(s(z,t))$). \\
	Agents determine what fraction of their time they should spend on learning, modelled by the function $s(z,t)$, or working, that is $1-s(z,t)$, by maximising their productivity. This corresponds to an optimal control problem, resulting in a coupling to a Hamilton-Jacobi-Bellman equation. Let $V = V(z,t)$ denote the value function, which corresponds to the outcome that an agent with initial knowledge level $z$ can expect when optimizing over the time horizon $[t, \infty)$. The value function $V$ satisfies the following Hamilton-Jacobi-Bellman equation:
	\begin{equation}\label{BeE}
		\pa_t V(z,t) - rV(z,t) = -\max_{s \in \mathcal{S}}{\left[U((1-s)z) + \alpha(s) \int_z^{\infty} \left(V(y,t)-V(z,t)\right)f(y,t)k(y,z) \, \md y \right]},
	\end{equation}
	where  $p= (1-s)z$ is the individual productivity, i.e. productivity is proportional to the knowledge level times the spent actually working, and the maximum is taken over the set
	$$
	\mathcal{S}:=\left\{s: \, [0,\infty) \times [0,T] \to [0,1]\right\},
	$$
	of possible time-fractions spent on learning. The function $U$ is the so-called utility, which relates the individual productivity $y$ to the expected gain. Possible choices include the linear utility
	\begin{align}\label{e:ulin}
		U(p) = p
	\end{align}
	or the logarithmic utility
	\begin{align}\label{e:ulog}
		U(p) = \ln(p).
	\end{align} 
	The linear utility  \eqref{e:ulin} and the logarithmic utility \eqref{e:ulog} correspond to the limits, $\zeta =0$ and $\zeta=1$, of the class of isoelastic utility functions
	\begin{align}
		\label{e:ucrra}
		U(p) = 
		\frac{p^{1-\zeta}}{1-\zeta} &\text{ with } \zeta \in (0,1).
	\end{align} 
	Note that the isoelastic utility \eqref{e:ucrra} is often stated with an additional $-1$ in the nominator, that is $U(p) =\frac{p^{1-\zeta}-1}{1-\zeta}$. However, this additive constant does not change the optimal decision. We prefer to work with the positive version of the utility and will consider utility functions of the form \eqref{e:ucrra} throughout this paper.
	Function \eqref{e:ucrra} is a constant relative risk aversion (CRRA) function, which means that the quantity $\eta = -p \frac{U''(p)}{U'(p)}$ is constant. Note that the logarithmic utility \eqref{e:ulog} relates to high risk aversion, while the linear utility  is risk neutral that is $\eta = 0$. \\
	Equation \eqref{BeE} is supplemented with a terminal condition of the form
	\begin{align*}
		V(z,T) = 0,
	\end{align*}
	while the Boltzmann equation is initialised at time $t=0$, in particular $f(z,0) = f_I(z)$.\\
	In summary we obtain the fully coupled system:
	\begin{subequations}\label{bmfg}
		\begin{align}
			\pa_t f(z,t) &= f(z,t) \int_0^z \alpha\left(s(y,t)\right)k(z,y)f(y,t) \, \md y\label{bmfgf}\\
			&\quad -\alpha\left(s(z,t)\right) f(z,t) \int_z^{\infty} k(y,z)f(y,t) \, \md y,\nonumber \\
			\pa_t V(z,t) - rV(z,t) &= -\max_{s \in \mathcal{S}}{\left[ U(p) + \alpha(s) \int_z^{\infty} \left(V(y,t)-V(z,t)\right)f(y,t)k(y,z) \, \md y \right]},\label{bmfgV}\\
			S(z,t) &= \arg \max_{s \in \mathcal{S}} {\left[ U(p) + \alpha(s) \int_z^{\infty} \left(V(y,t)-V(z,t)\right)f(y,t)k(y,z) \, \md y \right]}, \\
			f(z,0) &= f_I(z)\\
			V(z,T) &= 0,
		\end{align}
	\end{subequations}
	where $f_I$ is the initial distribution of agents.\\
	
	We conclude this subsection by discussing the notion of balanced growth path (BGP) solutions for system \eqref{bmfg}.  Assume there exists a constant $\gamma \in \mathbb{R}^+$ and define $x = z e^{-\gamma t}$ as well as the functions
	\begin{align}\label{e:rescalbgp}
		&f(z,t) =  e^{-\gamma t}\phi(ze^{-\gamma t}), ~~V(z,t)= e^{\gamma t}v(ze^{-\gamma t}) \,\text{ and }\, s(z,t)= \sigma (ze^{-\gamma t}).
	\end{align}
	Then the Boltzmann mean field game \eqref{bmfg} in the new variables $(\phi, v, \sigma) = (\phi(x), v(x), \sigma(x))$ becomes  
	\begin{subequations}\label{bgp}
		\begin{align}
			-\gamma \phi(x) -\gamma x \phi'(x) &= \phi(x)\int_0^x\alpha(\sigma(y)) \tilde{k}(t,x,y) \phi(y)\,\md y - \alpha(\sigma(x))\phi(x)\int_x^\infty \tilde{k}(t,y,x)\phi(y)\,\md y\label{e:phi}\\
			(r-\gamma)v(x)+\gamma x v'(x) &=  \max_{\sigma \in \Xi}\left[U(p)+\alpha(\sigma)\int_x^\infty[v(y)-v(x)]\tilde{k}(t,y,x) \phi(y)\,\md y \right] \label{e:v}\\
			\Sigma(x) &= \arg \max_{\sigma \in \Xi}\left\{U(p)+\alpha(\sigma)\int_x^\infty[v(y)-v(x)]\tilde{k}(t,y,x)\phi(y)\, \md y \right\}\label{e:S}
		\end{align}
	\end{subequations}
	where $\Xi := \left\{\sigma:\, [0,\infty) \to [0,1]\right\}$ and where for the polynomial learning kernel we have
	\begin{align*}
		\tilde{k}(t,y,x) = k(y,x) = \delta + (1-\delta) \left(\frac{y}{x}\right)^{\kappa} ,
	\end{align*}
	while the exponential learning kernel inherits a time-dependency in the BGP variables
	\begin{align*}
		\tilde{k}(t,y,x) = \mu e^{-\kappa e^{\gamma t} \lvert x - y \rvert}.
	\end{align*}\\
	Then the overall productivity of an economy, defined as
	\begin{align}\label{e:overallproductivity}
		Y(t) = \int_0^{\infty} U((1-S(z,t))z)f(z,t) \,dz,
	\end{align}
	can be written in the new variables as 
	$$Y(t) =  \int_{0}^{\infty}U\left((1-\Sigma(x))e^{\gamma t} x\right) \phi(x) dx,
	$$
where we defined the maximizer in BGP variables by $\Sigma \left(ze^{-\gamma t}\right) = S(z,t)$. So if a rescaling of type \eqref{e:rescalbgp} with $\gamma > 0$ exists, then the overall productivity	grows  exponentially in time. Economists relate \eqref{e:overallproductivity} to the gross domestic product (GDP) of an economy, which is known to grow exponentially for most developed countries (in the long time run). This is why economists are particularly interested in the existence of such a rescaling and the respective BGP solutions $(\phi, v, \sigma)$. In the original model \eqref{bmfg} with $k \equiv 1$ a necessary assumption for the existence of such a rescaling is that the initial cumulative distribution function of agents with respect to their knowledge level has a Pareto tail,  meaning that there is enough high knowledge so that the individuals with lower knowledge levels can learn, see \cite{BLW2}. Knowledge diffusion, as considered in 
\cite{PR, PRV}, also ensures the existence of BGPs. In this case the diffusivity enters as a multiplicative constant in the knowledge growth. Note that the growth parameter $\gamma$ relates to the wave speed of travelling wave solutions to the problem in logarithmic variables, see \cite{PR} for further details. The magnitude of the growth parameter $\gamma$ is influenced by the interaction function $k$ (as individuals learn less if the difference in their knowledge levels is too big). We will present first results in Section \ref{s:bgp}.

	\subsection{Our contribution}
	\noindent In this paper we present analytical and computational results investigating the behaviour of system \eqref{bmfg} for different interaction kernels $k$ and utility functions $U$. We briefly recall the main analytical results of system \eqref{bmfg} in the case of the trivial learning kernel $k \equiv 1$ and linear utility \eqref{e:ulin}, see Burger et al. \cite{BLW, BLW2}: 
	\begin{itemize}[nosep]
		\item The value function $V$ is a non-decreasing function of the knowledge level $z$ for all times $t>0$.
		\item The optimal learning time fraction $S$ is a non-increasing function of the knowledge level $z$ for all times $t>0$.
		\item Pareto tails of the initial cumulative distribution function are preserved in time.
		\item The existence of a Pareto tail ensures the existence of BGP solutions.
	\end{itemize}

	\vspace{0.3cm}
	We will show that general learning kernels $k(x,y)$ and utility functions $U(y)$ have impact on the monotonicity behaviour of solutions as well as the existence of BGPs. In particular we show:
	\begin{itemize}[nosep]
		\item \emph{Existence and uniqueness of a solution} to \eqref{bmfg} for small time intervals is valid for all bounded learning kernels (especially for \eqref{e:kpoly} and \eqref{e:kexp}) and for all isoelastic utility functions \eqref{e:ucrra} as well as for the linear utility function \eqref{e:ulin}, see Theorem \ref{t:existence} in Section \ref{s:analysis_coupled_systems}.
		\item The \emph{value function $V(\cdot,t)$, $t \in [0,T)$, is non-decreasing} for all choices of learning kernels and utility functions considered in this paper, see Theorem \ref{t:Vmon} in Section \ref{s:bellman}.
		\item The \emph{value function $V(z,t)$ is non-negative} for all $(z,t) \in \R_+ \times \R_+$, for isoelastic utility functions \eqref{e:ucrra} as well as for the linear utility function \eqref{e:ulin}, see Lemma \ref{l:Vpos}. However, it can obtain negative values for small $z$ in case of a logarithmic utility \eqref{e:ulog}, see Lemma \ref{l:Vposln}.
		\item The optimal learning time fraction \emph{$S(\cdot,t)$ is non-increasing} for all times $t \in [0,T)$ for isoelastic and linear utilities in the following situations (see Corollary \ref{c:Smon}, Section \ref{s:bellman})
		\begin{itemize}
		\item In case of the polynomial learning kernel \eqref{e:kpoly} the involved parameters have to satisfy $1-\zeta \geq \kappa (1-\delta)$. 
		\item In case of the exponential learning kernel \eqref{e:kexp} there are no restrictions on the parameters involved. However, the results only holds true in the interval $z \in \left[0,\frac{1-\zeta}{\kappa}\right]$.
		\end{itemize}
		Numerical simulations show that \emph{non-monotonous learning behaviour} can be observed for the logarithmic utility, see Figure \ref{f:log_v} in Section \ref{s:numerics}.
		\item \emph{Pareto tails} of the initial cumulative distribution function are preserved in time, but the tail index can get arbitrarily close to zero in case of the exponential learning kernels, see Section \ref{s:bgp}. We note that the existence of such BGP solutions remains an open problem, but we believe that the proof should follow the lines of  \cite{BLW2}.
		\item A formal localisation in the learning kernel leads to a new class of \emph{local mean-field game models}, which shares strong structural similarities with potential mean field games, see Section \ref{s:lmfg}.
	\end{itemize}
	
	\vspace*{1em}
	
	\noindent These results are organised in the following way throughout this article: Section \ref{s:analysis} is dedicated to the analytical investigation of \eqref{bmfg}, where we start by investigating the decoupled equations first. Section \ref{s:boltzmann} focuses on the analysis of the Boltzmann equation \eqref{bmfgf} for a given learning function $\alpha$, while Section \ref{s:bellman} analyses the Hamilton-Jacobi-Bellman equation \eqref{bmfgV} for different interaction kernels $k$ and utility functions $U$. Existence of solutions to the full system is presented in Section \ref{s:analysis_coupled_systems}. Section \ref{s:bgp} focuses on BGP solutions, before concluding with a discussion of a local mean field games model in Section \ref{s:lmfg}. This formal connection between BMFGs and MFGs is derived by considering the formal limit of a localised interaction kernel $k$. We finish by illustrating the behaviour of solutions with various computational experiments in Section \ref{s:numerics}.
	
	\subsection{Notation and assumptions}
	
	Throughout this paper we make the following assumptions (unless stated otherwise).
	
	\begin{assumption}[Assumptions on the learning kernel $k$]
		\label{a:k} The learning function $k \in C^1$ is non-increasing in the difference between knowledge levels, in particular $k(z,\cdot) \in C^1(\R_+)$ for all $z \in \R_+$ and
		\begin{align}
			\pa_yk(z,y) \geq 0, \quad \text{for } z \geq y.
		\end{align}
	\end{assumption}
	
	\begin{assumption}[Assumptions on the utility $U$]
		\label{a:U}
		Non-linear utility functions $U:\, \R_+ \to \R$, \newline $U \in C^2\big((0,\infty)\big)$ are assumed to be increasing and concave, that is 
		\begin{align*}
				U' > 0 \text{ on } [0,\infty), \text{ and }\, U'' < 0 \text{ on } (0,\infty), \,\, U'' \leq 0 \text{ on } [0,\infty).
		\end{align*}
	\end{assumption}
	
	\begin{assumption}[Assumptions on the learning rate $\alpha$]
		\label{a:alpha}
		The learning rate $\alpha:\, \R_+ \to \R_+$, \newline $\alpha \in C^2\big([0,\infty)\big)$ satisfies
		\begin{align*}
			\alpha: \R_+ \to \R_+,\, \alpha \in C^{\infty}\left([0,1]\right), \, \alpha(0)=0, \, \alpha'(0)=\infty,\,  \alpha''<0, \text{ and } \alpha'>0,
		\end{align*}
	\end{assumption}
	\begin{assumption}\label{a:fI}
		The initial condition for the distribution function $f$ of individuals fulfils
		\begin{align*}
			f(z,t=0) = f_I(z), \, z \in \R_+, \,\text{ with }\, f_I \in L^1(\R_+),\,\, \int_0^{\infty} f_I(z) \,  \md z = 1, \quad f_I(z)\geq 0, \, \forall z \geq 0.
		\end{align*}
	\end{assumption}
	
		Note that the considered polynomial and exponential learning kernel \eqref{e:kpoly} and \eqref{e:kexp}, as well as the utility functions \eqref{e:ulin}, \eqref{e:ulog}, \eqref{e:ucrra} satisfy Assumption \ref{a:k} and \ref{a:U}.
		
	\section{Analysis of the Boltzmann mean-field game}\label{s:analysis}
	
	This section focuses on the analytical investigation of the BMFG system \eqref{bmfg}. In Section \ref{s:boltzmann} we derive properties of the Boltzmann equation for a given learning function; its main result is a global in time $L^1$ existence and uniqueness of the solution. We proceed with Section \ref{s:bellman}, in which we investigate the Hamilton-Jacobi-Bellman equation for a given agent distribution $f$. These results are two-fold: On the one hand, we are able to show an existence and uniqueness result of a $L^\infty$ solution together with a stability estimate. On the other hand, we were able to show qualitative properties of $V$ and $S$. We then use these results in Section \ref{s:analysis_coupled_systems}, to show local existence and uniqueness of a solution to the fully coupled system \eqref{bmfg}.
	
	\subsection{Investigation of the Boltzmann equation}\label{s:boltzmann}
	
	We start analyzing the Boltzmann equation 
	\begin{equation}\label{BoE2}
		\begin{split}
			\pa_t f(z,t) &= f(z,t) \int_0^z \alpha(y,t) k(z,y)f(y,t) \, \md y-\alpha(z,t) f(z,t) \int_z^{\infty} k(y,z)f(y,t) \, \md y \\
			&=: G(f,f)(z) - L(f,f)(z)= Q(f,f)(z),
		\end{split}
	\end{equation}
	subject to the initial condition $f(z,t=0)=f_I(z)$, fulfilling Assumption \ref{a:fI} for a given learning function, i.e. $\alpha =\alpha (z,t) \in L^{\infty}\Big(\R_+^2\Big)$. 
	Note that boundedness of $\alpha$ holds if $s:\, \R_+\times [0,T] \to [0,1]$ and if Assumption \ref{a:alpha} is fulfilled, which implies an upper bound $\bar{\alpha} = \alpha(1) \geq \alpha(s(z,t))$, for all $z \in \R_+$ and $t<T \in \R_+$. 
	\begin{rem}
	    Note that the Boltzmann equation \eqref{BoE2} can be investigated considering probability density functions $f_I,f \in \mathcal{P}(\R_+)$ as initial condition, in which case more careful consideration in the definition of the integral operator in \eqref{BoE2} is needed. 
	\end{rem}
	In what follows we will also use the \emph{cumulative distribution function}
	\begin{align}\label{F}
		F(Z,t) = \int_0^Z f(z,t) \,  \md z.
	\end{align}
	From \eqref{BoE} we can easily deduce the following time evolution of $F$
	\begin{align}\label{BoEF}
		\pa_tF(Z,t) = \int_0^Z f(z,t) \int_0^z \alpha(y,t) k(z,y) f(y,t) \, \md y \,\md z - \int_0^Z \alpha(z,t) f(z,t) \int_z^{\infty} k(y,z) f(y,t)  \, \md y \, \md z.
	\end{align}
	Note that in the case $k \equiv 1$ equation \eqref{BoEF} can be transformed using the function $G = 1-F$, which satisfies
    \begin{align}\label{e:G}
        \partial_t G = G(1-G).
    \end{align}
    The viscous version of \eqref{e:G} exhibits travelling wave solution, which relates to BGP solutions, see \cite{BLW2, PR} for more details. 

	\paragraph{Conservation laws and properties of the collision operator:}
	
	Multiplying the right-hand-side of \eqref{BoE2} with a test-function $\vp$ and integrating over the state space $(0,\infty)$ we obtain the \emph{weak formulation of the collision operator
	\begin{align}\label{BoE2weak}
		\int_0^{\infty} Q(f,f)(z)\vp(z) \, \md z = \int_0^{\infty} \int_0^z f(z,t) f(y,t) k(z,y) \alpha(y,t) \left(\vp(z)-\vp(y)\right) \, \md y \, \md z.
	\end{align}
	}If we set $\vp \equiv 1$ in \eqref{BoE2weak}, we see that the total mass is conserved, that is
	\begin{align}\label{masscons}
		\frac{d}{dt} \int_0^{\infty} f(z,t) \, \md z  = 0, \,\, \text{and therefore} \, \int_0^{\infty} f(z,t) \, \md z =1 \text{ for all times } t\geq0.
	\end{align}
	This is consistent with the modelling assumption that no individuals are gained or lost; only a change in their knowledge level occurs. Moreover, the \emph{mean knowledge level} of the population is defined as
	\begin{align}\label{mean}
		m(t) := \int_0^{\infty} z f(z,t) \,  \md z,
	\end{align}
	which one finds to be monotonically increasing in time. Indeed, setting  $\vp(z)=z$ for all $z \in \R_+$ in \eqref{BoE2weak} we obtain that 
	\begin{align}\label{meanmon}
		\frac{d}{dt}m(t) := \int_0^{\infty} \int_0^z f(z,t) f(y,t) \alpha(y,t) k(z,y) (z-y) \, \md y \, \md z \geq 0. 
	\end{align}
	In the weak formulation of the collision operator the \emph{asymmetry of the individual's learning interactions}, characteristic for this dynamics, can be seen clearly. This is the crucial property which causes the mean knowledge level to be non-decreasing. Moreover, one notices immediately, that the above expression on the right-hand-side can only vanish, if we have $f(z,t) = \delta_{z_*}$ for some $z_* \in (0,\infty)$, meaning that the mean knowledge level will always increase until all knowledge is concentrated at one level. Moreover, a special case of the following Lemma ensures that no blow-up in finite time can occur for the first moment. More generally we have	
	\begin{lemma}\label{l:weightbound}
		Let the initial data $f_I$ fulfill \eqref{a:fI} as well as
		$$
			\int_0^{\infty} w(z) f_I(z) \, \md z < \infty,
		$$
		with a measurable, positive non-decreasing function $w:\, \R_+ \to \R_+$ and $\alpha \in L^{\infty}(\R^2_+)$. Moreover, assume $k(z,y) \leq \bar{k}$ for all $y<z$. Then the integral of a solution $f=f(z,t)$ to \eqref{BoE2} weighted by $w$ is non-decreasing with respect to time and bounded by
		$$
			\int_0^{\infty} w(z) f(z,t) \, \md z \leq e^{\bar{\alpha}\bar{k} t} \int_0^{\infty} w(z) f_I(z) \, \md z.
		$$
	\end{lemma}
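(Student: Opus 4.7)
The plan is to test the weak formulation \eqref{BoE2weak} of the collision operator against $\varphi = w$ and then apply a Gr\"onwall-type argument, using mass conservation \eqref{masscons} to close the estimate.

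First I would justify that inserting $\varphi=w$ into \eqref{BoE2weak} is admissible: $w$ is measurable and non-negative, so the integrals make sense (possibly with value $+\infty$ a priori), and the weak form yields
\begin{equation*}
\frac{d}{dt}\int_0^\infty w(z)\,f(z,t)\,\md z = \int_0^\infty\!\!\int_0^z f(z,t)f(y,t)k(z,y)\alpha(y,t)\bigl(w(z)-w(y)\bigr)\,\md y\,\md z.
\end{equation*}
Since $w$ is non-decreasing and the inner integral is over $y\le z$, the integrand is non-negative, which immediately gives the monotonicity statement.

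Next, for the quantitative bound, I would drop $w(y)\ge 0$ in the integrand to obtain
\begin{equation*}
\frac{d}{dt}\int_0^\infty w(z)\,f(z,t)\,\md z \le \bar\alpha\,\bar k\int_0^\infty\!\!\int_0^z f(z,t)f(y,t)\,w(z)\,\md y\,\md z \le \bar\alpha\,\bar k\,\Bigl(\int_0^\infty f(y,t)\,\md y\Bigr)\!\Bigl(\int_0^\infty w(z)f(z,t)\,\md z\Bigr),
\end{equation*}
by extending the inner $y$-integral to $(0,\infty)$ and using the uniform bounds $\alpha\le\bar\alpha$, $k\le\bar k$. Mass conservation \eqref{masscons} turns the first factor into $1$, so setting $M(t):=\int_0^\infty w(z)f(z,t)\,\md z$ we have $M'(t)\le \bar\alpha\bar k\,M(t)$, and Gr\"onwall's inequality gives $M(t)\le e^{\bar\alpha\bar k\,t}M(0)$, which is the stated bound.

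The only subtle point is justifying the use of the weak formulation when $w$ is unbounded; I would handle this by truncating $w$ with $w_n(z):=\min(w(z),n)$, which is bounded, non-decreasing and non-negative, applying the argument above to obtain $M_n(t)\le e^{\bar\alpha\bar k\,t}M_n(0)\le e^{\bar\alpha\bar k\,t}M(0)$, and then passing to the limit $n\to\infty$ by monotone convergence. I do not expect a serious obstacle beyond this truncation step since everything else is an elementary consequence of the weak form, the sign of $w(z)-w(y)$ on the integration region, and Gr\"onwall.
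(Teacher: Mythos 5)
Your proposal is correct and follows essentially the same route as the paper: test the weak formulation with $\vp = w$, use the monotonicity of $w$ on the region $y \le z$ to get the sign, then bound by $\bar\alpha\bar k$, invoke mass conservation and Gr\"onwall. Your truncation step $w_n = \min(w,n)$ with monotone convergence is a welcome extra justification that the paper leaves implicit, but it does not change the argument.
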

	\begin{proof}
		From the weak formulation \eqref{BoE2weak} with the choice $\vp=w$ we immediately observe due to the monotonicity assumption on $w$ that
		\begin{align*}
			\frac{\md}{\md t} \int_0^{\infty} w(z) f(z,t) \, \md z = \int_0^{\infty} \int_0^z  f(z,t) f(y,t) k(z,y) \alpha(y,t)(w(z)-w(y))  \, \md y \, \md z \geq 0.
		\end{align*}
		From here we can further estimate 
		\begin{align*}
			\frac{\md}{\md t} \int_0^{\infty} w(z) f(z,t) \, \md z &\leq \bar{\alpha}\bar{k} \int_0^{\infty} \int_0^z  f(z,t) f(y,t)(w(z)-w(y))  \, \md y \, \md z \\
			&\leq \bar{\alpha}\bar{k} \int_0^{\infty} \int_0^z  f(z,t) f(y,t)w(z) \, \md y \, \md z,
		\end{align*}
		where we used the boundedness of $\alpha$ and $\kappa$ as well as the positivity of $w$. This provides the exponential bound on the first moment. Hence, blow-up in finite time cannot occur. 
	\end{proof}

\paragraph{Existence and uniqueness of a solution:}
	
	Due to mass conservation \eqref{masscons}, we can deal with the quadratic nonlinearity in the collision operator. This together with the boundedness of the collision kernel $k$ and the learning function $\alpha$ allows us to formulate the following existence result:
	
	\begin{theorem}\label{t:existenceBoE}
		Let the initial data $f_I$ fulfil \eqref{a:fI}, $\alpha \in L^{\infty}(\R_+^2)$ and $k(z,y) \leq \bar{k}$ for all $y<z$. Then \eqref{BoE2} has a unique global solution $f \in C\big([0,\infty),\,L_+^1(\R_+)\big)$.
	\end{theorem}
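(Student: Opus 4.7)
The plan is to treat \eqref{BoE2} as a bilinear evolution equation in $L^1(\R_+)$, establish local-in-time existence and uniqueness by a Banach fixed point argument that exploits the boundedness of $\alpha$ and $k$, then use the mass conservation \eqref{masscons} and positivity preservation to bootstrap to a global solution.

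For local existence, I would rewrite \eqref{BoE2} in mild form. Setting
\begin{equation*}
A[f](z,t) := \alpha(z,t)\int_z^{\infty}k(y,z)f(y,t)\,\md y,
\qquad
G(f,f)(z,t) := f(z,t)\int_0^z\alpha(y,t)k(z,y)f(y,t)\,\md y,
\end{equation*}
a solution of \eqref{BoE2} satisfies
\begin{equation*}
f(z,t) = f_I(z)e^{-\int_0^t A[f](z,\tau)\,\md\tau} + \int_0^t e^{-\int_s^t A[f](z,\tau)\,\md\tau} G(f,f)(z,s)\,\md s.
\end{equation*}
Using $\|\alpha\|_\infty\le\bar\alpha$ and $k\le\bar k$, together with the trivial bound $\int_z^\infty f(y,\tau)\,\md y\le \|f(\cdot,\tau)\|_{L^1}$, one gets $A[f]\ge 0$ and $A[f]\le \bar\alpha\bar k\|f(\cdot,\tau)\|_{L^1}$, and similarly $\|G(f,f)(\cdot,s)\|_{L^1}\le \bar\alpha\bar k\|f(\cdot,s)\|_{L^1}^2$. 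The map $\Phi$ sending $f$ to the right-hand side of the mild form is therefore well-defined on the closed ball $B_R:=\{f\in C([0,T],L^1_+(\R_+)):\sup_{t\le T}\|f(\cdot,t)\|_{L^1}\le R\}$, with $R=2\|f_I\|_{L^1}=2$, and by the bilinearity of $Q$ one obtains
\begin{equation*}
\|Q(f,f)-Q(g,g)\|_{L^1}\le 2\bar\alpha\bar k\,(\|f\|_{L^1}+\|g\|_{L^1})\,\|f-g\|_{L^1}.
\end{equation*}
This makes $\Phi$ a contraction on $B_R$ provided $T\le T_\ast(\bar\alpha,\bar k,R)$, yielding a unique local fixed point in $C([0,T_\ast],L^1_+(\R_+))$.

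The mild representation immediately gives non-negativity of $f$: the first term is $\ge 0$ since $f_I\ge 0$ and the exponential is positive, while the gain term $G(f,f)$ is non-negative whenever $f$ is. A simple continuity argument based on iterating $\Phi$ from $f_I\ge 0$ confirms that $B_R\cap\{f\ge 0\}$ is invariant. Once positivity is known, the weak formulation \eqref{BoE2weak} with $\varphi\equiv 1$ yields mass conservation $\|f(\cdot,t)\|_{L^1}=\|f_I\|_{L^1}=1$ for all $t\in[0,T_\ast]$. Since the local time of existence $T_\ast$ depends only on $\bar\alpha$, $\bar k$ and $\|f_I\|_{L^1}$ (not on any higher moment), the solution can be restarted at $t=T_\ast$ with the same time step; iterating gives a solution on $[0,\infty)$.

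For uniqueness on the full interval, let $f_1,f_2$ be two solutions with the same initial datum. Subtracting the mild formulations, using $|e^{-a}-e^{-b}|\le |a-b|$ and the bilinear Lipschitz estimate above, one arrives at
\begin{equation*}
\|f_1(\cdot,t)-f_2(\cdot,t)\|_{L^1}\le C(\bar\alpha,\bar k)\int_0^t \|f_1(\cdot,s)-f_2(\cdot,s)\|_{L^1}\,\md s,
\end{equation*}
and Grönwall's inequality yields $f_1\equiv f_2$. The main technical obstacle I anticipate is carefully verifying that positivity is genuinely preserved under the Picard iteration rather than only in the limit; this is where writing the loss as a multiplicative exponential factor and iterating from $f^{(0)}\equiv f_I$ is essential, since it guarantees $f^{(n)}\ge 0$ at every step, and the $L^1$ limit inherits this property.
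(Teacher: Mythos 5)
Your argument is correct and rests on the same pillars as the paper's proof: the uniform bounds $\alpha\le\bar\alpha$, $k\le\bar k$ give a bilinear Lipschitz estimate for the collision operator in $L^1$, a Picard/Banach fixed point gives local existence and uniqueness, and mass conservation (weak formulation with $\vp\equiv1$) shows the $L^1$ norm is unchanged at the restart time, so the local construction iterates to a global solution. The one genuine difference is the formulation of the fixed point: the paper iterates directly on the integrated equation using the estimate $\|Q(f,f)-Q(g,g)\|_{L^1}\le 4\bar\alpha\bar k\|f-g\|_{L^1}$ (on the unit ball), and simply asserts that nonnegativity ``follows immediately,'' whereas you use the Duhamel/mild form in which the loss term appears as the exponential damping factor $e^{-\int A[f]\,\md\tau}$. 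This costs a little extra work in the contraction and Gr\"onwall steps (you must also control differences of the exponentials via $|e^{-a}-e^{-b}|\le|a-b|$, as you note), but it buys a transparent proof that the nonnegative cone is invariant under the iteration --- precisely the point the paper leaves implicit --- and it makes explicit that the existence time depends only on $\bar\alpha$, $\bar k$ and the conserved mass, which is what justifies the restart argument. Both routes are sound; yours is slightly more detailed where the paper is terse, and the paper's is slightly shorter where the estimates are routine.
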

	
	\begin{proof}
		Let $f,g \in L_+^1(\R_+)$ such that $\|f\|_{L^1(\R_+)},\, \|g\|_{L^1(\R_+)} \leq 1$. We aim to perform Lipschitz estimates on the collision operator in order to be able to apply a Picard iteration argument. We treat the gain term $G$ and the loss-term $L$ defined in \eqref{BoE2} separately. We calculate
		\begin{align*}
			\|G(f,f)-G(g,g)\|_{L^1(\R)} &= \int_0^{\infty} \left|\int_0^z \alpha(z,t)k(z,y)\left(f(z,t)f(y,t)-g(z,t)g(y,t)\right)\, \md y \right| \, \md z \\
			&\leq \bar{\alpha} \bar{k} \int_0^{\infty}\int_0^z \left|f(z,t)f(y,t)-g(z,t)g(y,t)\right| \, \md y \, \md z,
		\end{align*}
		where we used that $\alpha(z,t) \leq \bar{\alpha} := \max_{(z,t) \in \R_+^2} \alpha(z,t)$ and the fact that $k(z,y) \leq \bar{k}$ for $y<z$. We further estimate
		\begin{align*}
			\|G(f,f)-G(g,g)\|_{L^1(\R)} &\leq \bar{\alpha} \bar{k} \int_0^{\infty}\int_0^{\infty} \left(|f(z,t)||f(y,t)-g(y,t)|+|g(y,t)||f(z,t)-g(z,t)|\right) \, \md y \, \md z \\
			&\leq 2\bar{\alpha}{\bar{k}} \|f-g\|_{L^1(\R_+)}.
		\end{align*}
		Similarly, we proceed with the loss-term
		\begin{align*}
			\|L(f,f)-L(g,g)\|_{L^1(\R)} &= \int_0^{\infty} \left|\int_z^{\infty} \alpha(y,t)k(y,z)\left(f(z,t)f(y,t)-g(z,t)g(y,t)\right)\, \md y \right| \, \md z \\
			&\leq \bar{\alpha}{\bar{k}} \int_0^{\infty}\int_z^{\infty} \left|f(z,t)f(y,t)-g(z,t)g(y,t)\right| \, \md y \, \md z,
		\end{align*}
		where we used again the boundedness of $\alpha$ and here the fact that $k(y,z) \leq \bar{k}$ for $z<y$. As before, we obtain
		\begin{align*}
			\|L(f,f)-L(g,g)\|_{L^1(\R)} &\leq \bar{\alpha} \int_0^{\infty}\int_0^{\infty} \left(|f(z,t)||f(y,t)-g(y,t)|+|g(y,t)||f(z,t)-g(z,t)|\right) \, \md y \, \md z \\
			&\leq 2\bar{\alpha} {\bar{k}}\|f-g\|_{L^1(\R_+)},
		\end{align*}
		from which we conclude
		\begin{align*}
			\|Q(f,f)-Q(g,g)\|_{L^1(\R+)} \leq 4 \bar{\alpha} \bar{k} \|f-g\|_{L^1(\R_+)},
		\end{align*}
		and, hence, Lipschitz-continuity of $Q$ uniform in time. From here, we can conclude the existence of a unique global solution by Picard iteration. Nonnegativity and conservation of mass follow immediately, the latter implying global existence by an iteration argument.
	\end{proof}
	
	\paragraph{Asymptotic behaviour:}
	
	We aim to show that under the assumption of limited initial knowledge, i.e. $\supp{(f_I)} = \mathcal{K}$, where $\mathcal{K} \subset \R_+$ is \emph{compact}, knowledge will concentrate at $z_*:=\arg \max_{z\in \R_+} f_I(z)$. 
	
	The first result shows that under this assumption the creation of more knowledge is impossible.
	\begin{lemma}\label{l:compsupp}
		Let $\supp{(f_I)} = \mathcal{K}$, where $\mathcal{K}$ is a compact subset of $\R_+$ and the solution to \eqref{BoE2} be continuous, i.e. $f \in C([0,\infty) \times \R_+)$. Then $\supp{(f(\cdot, t))} \subset \mathcal{K}$ for all $t>0$.
	\end{lemma}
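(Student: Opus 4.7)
The plan is to exploit the structural observation that both the gain and loss contributions in $Q(f,f)(z)$ carry a common prefactor of $f(z,t)$. Rewriting equation \eqref{BoE2} pointwise in $z$ as
$$
\pa_t f(z,t) = A[f](z,t)\, f(z,t), \qquad A[f](z,t) := \int_0^z \alpha(y,t)k(z,y)f(y,t)\,\md y - \alpha(z,t)\int_z^{\infty}k(y,z)f(y,t)\,\md y,
$$
one sees that for each fixed $z \in \R_+$ the map $t \mapsto f(z,t)$ satisfies a linear, scalar, time-inhomogeneous ODE with initial datum $f_I(z)$.

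The second step is to verify that $A[f](z,\cdot)$ is uniformly bounded on every bounded time interval. Using the essential bound $\alpha \leq \bar{\alpha}$, the assumed bound $k \leq \bar{k}$, and mass conservation \eqref{masscons}, I estimate
$$
|A[f](z,t)| \;\leq\; 2\bar{\alpha}\bar{k}\,\|f(\cdot,t)\|_{L^1(\R_+)} \;=\; 2\bar{\alpha}\bar{k}.
$$
Consequently, the unique solution of the scalar linear ODE at the level $z$ admits the explicit Duhamel-type representation
$$
f(z,t) \;=\; f_I(z)\,\exp\!\left(\int_0^t A[f](z,s)\,\md s\right),
$$
from which the implication $f_I(z)=0 \;\Rightarrow\; f(z,t)=0$ for all $t\geq 0$ is immediate.

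Finally, I translate this pointwise vanishing into the support statement. Since $\mathcal{K}=\supp(f_I)$ is closed, its complement in $\R_+$ is open and $f_I\equiv 0$ there; so for every $z \in \R_+\setminus \mathcal{K}$ the representation above yields $f(z,t)=0$ for all $t\geq 0$. By the assumed continuity of $f$ this implies $\{z:f(z,t)\neq 0\}\subseteq \mathcal{K}$, whence $\supp f(\cdot,t) \subseteq \overline{\mathcal{K}} = \mathcal{K}$.

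The only delicate point I foresee is a notational one: justifying the pointwise evaluation of the integral operator $A[f]$ at an arbitrary $z$ (rather than almost every $z$). This is handled by invoking $f(\cdot,t)\in L^1(\R_+)$ from Theorem \ref{t:existenceBoE}, together with the continuity of $f$ assumed in the statement, which ensures that $A[f](z,\cdot)$ depends continuously on $t$ and the ODE argument is rigorous at every $z$. No Gronwall-type iteration is needed beyond this, since the equation at fixed $z$ is linear and scalar.
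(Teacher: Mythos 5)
Your argument is correct, and it is more explicit than what the paper actually provides: the paper's ``proof'' of Lemma \ref{l:compsupp} is a one-line appeal to a maximum-principle argument in the style of \cite{BLW}[Proposition 3.1], whereas you give a self-contained elementary proof. The two routes hinge on the same structural fact --- both the gain and the loss parts of $Q(f,f)(z)$ carry the prefactor $f(z,t)$, so zeros of the initial datum cannot be filled in --- but you exploit it directly through the integrating-factor/Duhamel representation $f(z,t)=f_I(z)\exp\bigl(\int_0^t A[f](z,s)\,\md s\bigr)$ with $|A[f]|\le 2\bar\alpha\bar k$ by mass conservation, which immediately yields $f_I(z)=0\Rightarrow f(z,t)=0$, rather than running a comparison/maximum-principle argument at a putative first touching point. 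What your version buys is transparency and quantitative control (the same representation shows the support can neither grow nor shrink on $\{f_I\neq 0\}$); what the maximum-principle route buys is robustness when no explicit multiplicative structure is available. One technical point deserves a slightly more careful sentence than you give it: the $L^1$ theory of Theorem \ref{t:existenceBoE} yields the mild equation only for a.e.\ $z$, with a null set a priori depending on $t$; you should fix a countable dense set of times, use continuity of $f$ and of $t\mapsto A[f](z,t)$ to get a $t$-independent null set, solve the scalar ODE for those $z$, and only then invoke continuity of $f(\cdot,t)$ to conclude $f(z,t)=0$ for \emph{every} $z\in\R_+\setminus\mathcal{K}$ (a full-measure subset of this open set is dense in it). Alternatively, reading the hypothesis $f\in C([0,\infty)\times\R_+)$ as saying the equation holds pointwise makes your ODE step immediate; either reading closes the gap you flagged.
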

	\begin{proof}
		This result is based on a maximum principle argument, similar to \cite{BLW}[Proposition 3.1].
	\end{proof}
	
	The increase of mean knowledge \eqref{mean} together with Lemma \ref{l:compsupp} implies that knowledge will accumulate at the largest initial knowledge level, that is $z_* = \sup \supp(f_I)$, or at $\infty$ if $f_I$ is positive everywhere. In particular, if we can ensure that learning is always possible, i.e. the kernel $k$ and the learning function $\alpha$ have a positive lower bound, the knowledge will accumulate at the highest level possible.
	
	\begin{theorem}\label{t:concentration}
		Let $z_* := \sup\big({\supp{(f)}}\big)$.
		Let further $\alpha(z,t) \geq \underline{\alpha} >0$, $k(z,y) \geq \underline{k} >0 $ for all $z<y< z_*$, $t>0$. If $z_*<\infty$, i.e. the support of $f_I$ is bounded, we have
		$$
		f(\cdot, t) \rightharpoonup^* \delta_{z_*}, \, \text{ for }\, t \to \infty.
		$$
		In the case where the $f(\cdot,t)$ does not have compact support the knowledge accumulates at $z=\infty$ for $t \to \infty$.
	\end{theorem}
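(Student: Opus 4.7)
The plan is to show that for every $\epsilon > 0$ the mass
\begin{align*}
F_\epsilon(t) := \int_0^{z_*-\epsilon} f(z,t) \, \md z
\end{align*}
decays exponentially to zero, so that mass conservation \eqref{masscons} forces all the mass into an arbitrarily small left neighbourhood of $z_*$. I would begin by noting that, by Lemma \ref{l:compsupp}, $\supp f(\cdot,t) \subset [0, z_*]$ for all $t > 0$, and hence $\int_{z_*-\epsilon}^{z_*} f(y,t) \, \md y = 1 - F_\epsilon(t)$ for every $\epsilon \in (0, z_*)$.

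The key step is to derive a logistic-type differential inequality for $F_\epsilon$. I would apply the weak formulation \eqref{BoE2weak} with the test function $\vp = \mathbbm{1}_{[0,\,z_*-\epsilon]}$, justified by an approximation with smooth cutoffs together with dominated convergence (using $f \in L^1$ and the $L^\infty$ bounds on $\alpha$ and $k$). The only surviving contribution comes from pairs with $y \leq z_*-\epsilon < z \leq z_*$, on which $\vp(z) - \vp(y) = -1$, so
\begin{align*}
    \frac{\md}{\md t} F_\epsilon(t) \; = \; -\int_{z_*-\epsilon}^{z_*}\!\int_0^{z_*-\epsilon} f(z,t)\, f(y,t)\, k(z,y)\, \alpha(y,t) \, \md y \, \md z .
\end{align*}
(Equivalently, the gain integral over $[0, z_*-\epsilon]$ cancels exactly with the portion of the loss integral with $y \leq z_*-\epsilon$, by the usual Boltzmann symmetry.) Invoking the positive lower bounds $\alpha \geq \underline\alpha$ and $k \geq \underline k$ on the relevant region and the identity above, I obtain
\begin{align*}
    \frac{\md}{\md t} F_\epsilon(t) \; \leq \; -\underline\alpha\,\underline k \; F_\epsilon(t)\,\bigl(1 - F_\epsilon(t)\bigr).
\end{align*}

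Since $z_* = \sup\supp(f_I)$, every left neighbourhood of $z_*$ carries positive initial mass, so $F_\epsilon(0) = 1 - \eta_\epsilon$ with $\eta_\epsilon \in (0,1)$; comparison with the explicit logistic solution then gives $F_\epsilon(t) \to 0$ exponentially. To upgrade this to weak-$\ast$ convergence, I would take any $\vp \in C_b(\R_+)$, split $\int_0^\infty \vp(z) f(z,t)\,\md z$ at $z_*-\epsilon$, bound the lower part by $\|\vp\|_\infty F_\epsilon(t) \to 0$, and rewrite the upper part as $\vp(z_*)(1 - F_\epsilon(t))$ plus a remainder controlled by the modulus of continuity of $\vp$ on $[0,z_*]$. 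Sending $t \to \infty$ first and then $\epsilon \to 0$ yields $f(\cdot,t) \rightharpoonup^* \delta_{z_*}$.

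The unbounded case follows from the identical argument applied to $F_R(t) := \int_0^R f(z,t)\,\md z$ for arbitrary fixed $R > 0$: unboundedness of $\supp(f_I)$ gives $F_R(0) < 1$, the same cancellation leads to $\frac{\md}{\md t} F_R \leq -\underline\alpha\,\underline k\, F_R(1-F_R)$, and hence $F_R(t) \to 0$, i.e.\ mass escapes every bounded interval. The main technical subtlety is the rigorous use of the indicator test function, which needs the cutoff approximation described above; everything else in the argument reduces to an elementary analysis of a scalar logistic ODE.
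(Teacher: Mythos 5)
Your proof is correct and takes essentially the same route as the paper: both apply the weak formulation \eqref{BoE2weak} with an indicator test function and the lower bounds $\alpha\geq\underline{\alpha}$, $k\geq\underline{k}$ to obtain the logistic inequality $\frac{\md}{\md t}F \leq -\underline{\alpha}\,\underline{k}\,F(1-F)$ for the cumulative distribution below any level $Z<z_*$, conclude $F(Z,t)\to 0$, and deduce weak-$\ast$ convergence to $\delta_{z_*}$ (resp.\ escape of mass to infinity). Your explicit remarks that $F_\epsilon(0)<1$ and the modulus-of-continuity argument for the weak-$\ast$ upgrade merely spell out steps the paper leaves implicit.
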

	\begin{proof}
		For the cumulative distribution function of \eqref{F} we compute
		\begin{align*}
			- \frac{\md}{\md t} F(Z,t) &= \frac{d}{dt} (1-F(Z,t)) =  \frac{\md}{\md t} \int_Z^{\infty} f(z,t) \, \md z \\
			&= \int_Z^{\infty} \left[\int_0^z \alpha(y,t) k(z,y) f(y,t) f(z,t) \, \md y - \int_Z^z \alpha(y,t) k(z,y) f(y) f(z) \, \md y \right] \, \md z \\
			&=  \int_Z^{\infty} \int_0^Z \alpha(y,t) k(z,y) f(y) f(z) \, \md y \, \md z \\
			&\geq \underline{\alpha} \underline{k} \int_Z^{\infty} \int_0^Z f(y) f(z) \, \md y \, \md z = \underline{\alpha} \underline{k} (1-F(Z,t))F(Z,t),
		\end{align*}
		where from the first to the second line we used \eqref{BoE2weak} with $\vp(z) := \mathbb{1}_{[Z,\infty)}$ and the estimate is due to $\alpha(z,t) \geq \underline{\alpha}$ for all $z,t \in \R_+$ and $k(z,y) \geq \underline{k}$ for all $z > y$. This is equivalent to
		\begin{align*}
			\frac{\md}{\md t} F(Z,t) \leq - \underline{\alpha} \underline{k} (1-F(Z,t))F(Z,t),
		\end{align*}
		from which we can conclude uniform convergence $F(Z,t) \to 0$ for all $Z < z_*$ as $t \to \infty$. Convergence of the distribution function $f$ to the Dirac mass centric at $z_*$ follows.
	\end{proof}
	
	The condition of the initial datum having non-compact support is crucial to ensure that the overall knowledge level is increasing. In particular a Pareto tail condition on the initial datum is needed to allow exponential growth of the overall economy as in case of BGP solutions (see Section \ref{s:bgp}).
		
	\subsection{Investigation of the Hamilton-Jacobi-Bellman equation}\label{s:bellman}
	
	Next we investigate the Hamilton-Jacobi-Bellman equation
	\begin{equation}\label{BeE2}
		\begin{split}
			&\pa_t V(z,t) - rV(z,t) = -\max_{s \in \mathcal{S}}{\left[U((1-s)z) + \alpha(s(z,t)) \int_z^{\infty} \left(V(y,t)-V(z,t)\right)f(y,t)k(y,z) \, \md y \right]}, \\
			&V(z,T) = 0,
		\end{split}
	\end{equation}
	 for a given distribution function $f \in C\left((0,T], L_+^1(\R_+)\right)$. To ease notation, we define 
	\begin{align}\label{B}
		B(z,t) :=  \int_z^{\infty} \left(V(y,t)-V(z,t)\right)f(y,t)k(y,z) \, \md y, 
	\end{align}
	which measures the \emph{benefit from search} at time $t>0$ of an individual with knowledge level $z$.
	
	\paragraph{Solvability of the optimization problem:}
	First, we make sure that the maximization problem in \eqref{BeE2} has a solution for a fixed $B \in \R_+$.	
	\begin{lemma}\label{l:max}
		Let $\alpha:\, [0,1] \to \R_+$, satisfy Assumptions \ref{a:alpha} and $U:\, \R_+ \to \R_+$ satisfy Assumption \ref{a:U}.  Then for every $z>0$ and $B \in \R$ there exists a unique solution $S=S(B)$ to the optimization problem
		\begin{align}\label{OP}
			\max_{s \in \mathcal{S}} \left[U((1-s)z)+\alpha(s) B\right],
		\end{align}
		and we denote $S=S(B):=\operatorname{argmax}_{s \in \mathcal{S}} \left[U((1-s)z)+\alpha(s) B\right]$.
	\end{lemma}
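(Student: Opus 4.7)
The plan is to set $J(s) := U((1-s)z) + \alpha(s) B$ on $[0,1]$ and analyse its shape. Existence of a maximiser follows from Weierstrass: $J$ is continuous on $[0,1]$ under Assumptions \ref{a:U}--\ref{a:alpha}. The one caveat is a utility unbounded below at zero (e.g.\ logarithmic), in which case $J(s) \to -\infty$ as $s\to 1^-$; then one restricts the search to a compact subinterval $[0,1-\varepsilon]$ on which $J$ is continuous, giving existence there and hence on $[0,1]$.

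For uniqueness I would split according to the sign of $B$. If $B \leq 0$, then $\partial_s U((1-s)z) = -z\,U'((1-s)z) < 0$ from $U' > 0$ (Assumption \ref{a:U}), while $\alpha'(s) B \leq 0$ from $\alpha' > 0$ (Assumption \ref{a:alpha}); hence $J'(s) < 0$ on $(0,1)$ and $J$ is strictly decreasing, so the unique maximiser is $S(B) = 0$.

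If $B > 0$, I would compute
$$
J''(s) \;=\; z^2\, U''((1-s)z) + \alpha''(s)\, B \;<\; 0,
$$
where $U'' \leq 0$ (Assumption \ref{a:U}) and $\alpha'' < 0$ (Assumption \ref{a:alpha}) combine with $B>0$ to give strict negativity. Thus $J$ is strictly concave on $[0,1]$, and a strictly concave function on a convex compact set has a unique maximiser. In addition, $\alpha'(0) = \infty$ together with $B > 0$ forces $J'(s) \to +\infty$ as $s \to 0^+$, so the maximiser lies in $(0,1]$, and it is characterised either by $J'(S) = 0$ or, when $J'(1) \geq 0$ is admissible (i.e.\ $U(0)$ is finite), by $S=1$.

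The most delicate point is the careful treatment of the logarithmic (or more generally, unbounded-below) utility, where $s=1$ must be excluded from the admissible set. This is handled by the limit argument above ($J(s)\to -\infty$ as $s \to 1^-$), which automatically places the maximiser in the interior $(0,1)$ so that the first-order optimality condition $J'(S) = 0$ applies and strict concavity yields uniqueness.
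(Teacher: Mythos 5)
Your proof is correct and rests on the same core mechanism as the paper's: strict concavity of the objective coming from $\alpha''<0$ and $U''\leq 0$, combined with the first-order optimality condition (and the corner case $S(B)=0$ for $B\leq 0$). The only cosmetic difference is that the paper first substitutes $\xi=\alpha(s)$, $\beta=\alpha^{-1}$ and shows the optimality condition $\frac{B}{z}=U'((1-\beta(\xi))z)\beta'(\xi)$ has a strictly monotone right-hand side, whereas you differentiate directly in $s$ and split on the sign of $B$; both yield the same uniqueness argument.
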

	\begin{proof}
		Defining $\xi:=\alpha(s)$ and $\alpha^{-1}:=\beta$, we can rewrite \eqref{OP} as 
		\begin{align*}
			\max_{\xi \in [0,\alpha(1)]} \left[U((1-\beta(\xi)))z+B\xi\right].
		\end{align*}
		Calculating the optimality condition we obtain
		\begin{align}\label{optcond}
		\frac{B}{z} = U'((1-\beta(\xi))z) \beta'(\xi).
		\end{align}
		The assumption on $\alpha$ (strict concavity) and hence $\beta$ (strict convexity) imply
		$$
		\pa_{\xi} \left( U'((1-\beta(\xi))z) \beta'(\xi)\right) = -U''((1-\beta(\xi))z)\beta'^2(\xi)z+U'((1-\beta(\xi))z)\beta''(\xi)>0, \quad \forall \beta(\xi) \neq 1
		$$
		and, hence, existence of a unique solution $S \in [0,1)$ to \eqref{OP}.
	\end{proof}
	
	\begin{lemma}\label{l:lip}
		Let $\alpha:\, \R_+\to \R_+$, $U:\, \R_+ \to \R$ fulfil Assumptions \ref{a:alpha} and \ref{a:U}, respectively. Let further $S=S(B)$ be the optimal solution to \eqref{OP} for a given $z \in \R_+$ and $B \in \R$. Under the condition
		$$
		\lim_{B \to 0}B^3\frac{\left[\alpha''(S(B))U'((1-S(B))z)+\alpha'(S(B))zU''((1-S(B))z)\right]}{U'((1-S(B))z)^3} < 0,
		$$
		the maps $B \to S(B)$, $B \to \alpha(S(B))$ and $\alpha(S(B))B$ are Lipschitz-continuous.
	\end{lemma}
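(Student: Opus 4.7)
The plan is to combine the strict convexity-concavity structure underlying Lemma \ref{l:max} with a direct implicit differentiation of the optimality condition \eqref{optcond} in order to obtain explicit control of $\frac{dS}{dB}$ and $\frac{d\alpha(S(B))}{dB}$. Working with $\xi:=\alpha(S)$ and $\beta:=\alpha^{-1}$ as in the proof of Lemma \ref{l:max}, the optimality condition reads
\begin{equation*}
\frac{B}{z}= U'((1-\beta(\xi))z)\,\beta'(\xi),
\end{equation*}
and since the right-hand side is strictly increasing in $\xi$ (which is exactly the positivity established inside the proof of Lemma \ref{l:max}), the implicit function theorem yields a $C^1$-map $B\mapsto\xi(B)=\alpha(S(B))$ on $B>0$ with
\begin{equation*}
\frac{d\xi}{dB}\;=\;\frac{1/z}{-U''((1-\beta(\xi))z)\,\beta'(\xi)^2\,z+U'((1-\beta(\xi))z)\,\beta''(\xi)}.
\end{equation*}

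The next step is to translate this expression back into the variables $S,\alpha'(S),\alpha''(S)$ using $\beta'(\xi)=1/\alpha'(S)$ and $\beta''(\xi)=-\alpha''(S)/\alpha'(S)^3$, and to eliminate the remaining factor $\alpha'(S)^3$ by means of the identity $\alpha'(S)=zU'((1-S)z)/B$, which is itself the optimality condition rewritten. A short algebraic manipulation produces
\begin{equation*}
\frac{d\xi}{dB}\;=\;\frac{-z^2}{\displaystyle B^3\,\frac{\alpha''(S)\,U'((1-S)z)+\alpha'(S)\,z\,U''((1-S)z)}{U'((1-S)z)^3}},
\end{equation*}
so that the quantity appearing in the lemma's hypothesis is, up to sign, precisely the denominator of $d\xi/dB$. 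By Assumptions \ref{a:alpha}--\ref{a:U} this denominator is strictly negative for every $B>0$, depends continuously on $B$ on any interval $[\varepsilon,\infty)$, and by the assumed limit remains bounded away from zero as $B\to 0^+$; hence $d\xi/dB$ is uniformly bounded on $(0,\infty)$. For $B\le 0$ the optimality equation has no interior solution (its right-hand side is non-negative), so the maximizer sticks at $S=0$, both $\xi(B)$ and $S(B)$ vanish identically there, and no extra contribution to the Lipschitz constant arises.

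Once $B\mapsto\alpha(S(B))=\xi(B)$ is Lipschitz, the remaining two maps follow at once. The chain rule gives $\frac{dS}{dB}=\frac{1}{\alpha'(S(B))}\,\frac{d\xi}{dB}$, and since $\alpha''<0$ implies that $1/\alpha'(S)$ is bounded on $[0,1]$ and in fact tends to $0$ as $S\to 0$ (i.e.\ as $B\to 0^+$), $S(B)$ is likewise Lipschitz. For the product $B\,\alpha(S(B))=B\,\xi(B)$ the product rule yields the derivative $\xi(B)+B\,\frac{d\xi}{dB}$, which is bounded because $\xi\in[0,\alpha(1)]$ and $d\xi/dB$ has already been bounded.

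The main technical obstacle is precisely the behaviour as $B\to 0^+$: by Assumption \ref{a:alpha} one has $\alpha'(0)=\infty$, so both $\alpha'(S(B))$ and $\beta''(\xi(B))=-\alpha''(S)/\alpha'(S)^3$ diverge. The virtue of the limit hypothesis in the statement is that, after substituting $\alpha'(S)=zU'((1-S)z)/B$, the blow-up of $\beta''$ is recast exactly as the explicit factor $B^3/U'^3$ appearing in the denominator derived above, and requiring the displayed limit to be strictly negative is the minimal condition that cancels this singularity and keeps $d\xi/dB$ finite at $B=0$.
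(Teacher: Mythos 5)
Your route is essentially the paper's: the paper also differentiates the optimality condition \eqref{optcond} (writing $S(B)=H^{-1}(z/B)$ with $H(S)=\alpha'(S)/U'((1-S)z)$), arrives at exactly your formula $\frac{d}{dB}\alpha(S(B))=-z^2U'((1-S)z)^3\big/\big(B^3[\alpha''(S)U'((1-S)z)+\alpha'(S)zU''((1-S)z)]\big)$, and uses the displayed limit hypothesis to control the behaviour as $B\to0^+$, with $S\equiv0$ for $B\le 0$. So for the interior regime your argument reproduces the published one in the variable $\xi=\alpha(S)$.

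However, there is a genuine gap: you claim the implicit function theorem gives a $C^1$ map $B\mapsto\xi(B)$ on all of $B>0$, which is false when $U'(0)<\infty$ (in particular for the linear utility \eqref{e:ulin}, which the lemma must cover). For $B>\frac{zU'(0)}{\alpha'(1)}$ the optimality condition has no solution with $S\in[0,1]$ --- the critical point would require $S>1$ --- so the maximizer of \eqref{OP} sits on the boundary $S\equiv1$ and your IFT-based formula no longer describes $S(B)$. The paper devotes a separate case to this: a Taylor expansion of $U$ around $0$ and $\alpha$ around $1$ shows the maximum is attained at $s=1$ there, and Lipschitz continuity then requires checking the matching at the junction $B=\frac{zU'(0)}{\alpha'(1)}$ (where the interior branch reaches $S=1$), which your proposal never addresses; your assertion that $d\xi/dB$ is ``uniformly bounded on $(0,\infty)$'' is unjustified precisely because the interior characterization breaks down in this regime. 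Relatedly, your product-rule bound for $B\alpha(S(B))$ uses only boundedness of $\xi$ and of $d\xi/dB$, which does not control $B\,d\xi/dB$ for large $B$; this is harmless once the saturation case ($\xi$ constant for large $B$ when $U'(0)<\infty$, and $d\xi/dB=O(1/B)$ via $B=zU'((1-S)z)/\alpha'(S)$ when $U'(0)=\infty$) is spelled out, but as written the step is incomplete. The small-$B$ analysis, including the role of the limit hypothesis and the chain-rule bound $|S'(B)|\le\alpha'(1)^{-1}|\xi'(B)|$, is correct and matches the paper.
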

	\begin{proof}
		We distinguish between the following cases
		\begin{enumerate}
			\item{$B\leq0$:} For non-positive $B$ we see immediately that $S(B) \equiv 0$ has to hold.
			\item{$0<B<\frac{zU'(0)}{\alpha'(1)}$:} In this case there exists a unique solution $S(B)$ given by $S(B)=H^{-1}(\frac{z}{B})$, where we defined $H(S(B))):=\frac{\alpha'(S(B))}{U'((1-S(B))z)}$. The invertibility of the function $H(\cdot)=\frac{\alpha'(\cdot)}{U'((1-\cdot)z)}$ is secured since it is strictly monotonically increasing due to the monotonicity assumptions we made on $\alpha'$ and $U'$. Derivation of $S(B)$ yields
			$$
			S'(B)= -\frac{zU'((1-S(B))z)^2}{B^2\alpha''(S(B))U'((1-S(B))z)+B^2zU''((1-S(B))z)\alpha'(S(B))}.
			$$
			Furthermore, we compute
			\begin{align*}
				&\frac{\md}{\md B} \alpha(S(B))= \alpha'(S(B))S'(B)=\frac{zU'((1-S(B))z)}{B}S'(B)\\
				 &\quad =-\frac{z^2U'((1-S(B))z)^3}{B^3\left[\alpha''(S(B))U'((1-S(B))z)+\alpha'(S(B))zU''((1-S(B))z)\right]},
			\end{align*}
			from which together with the assumption 
			$$
			\lim_{B \to 0}\frac{U'((1-S(B))z)^3}{B^3\left[\alpha''(S(B))U'((1-S(B))z)+\alpha'(S(B))zU''((1-S(B))z)\right]} < 0
			$$
			we obtain positivity of $S'(B)$ for small $B$. This gives piece-wise continuous differentiability of $S$ as a function of $B$. Since due to the aforesaid assumption also  
			$$
			-\lim_{B \to 0}B^2\frac{\left[\alpha''(S(B))U'((1-S(B))z)+\alpha'(S(B))zU''((1-S(B))z)\right]}{U'((1-S(B))z)^3} = \infty
			$$
			has to hold, we conclude (using Assumption \ref{a:U} for $U'$) that
			$$
			\lim_{B\to0} S'(B) = 0.
			$$
			Therefore $S$ is continuous at $0$. Continuity at $\frac{zU'(0)}{\alpha'(1)}$ for finite $U'(0)$ follows from the continuity of $\alpha', \alpha'', U', U''$ on $(0,\infty)$. Thus, we can deduce Lipschitz-continuity for $B \to S(B)$, which moreover ensures Lipschitz-continuity of $B \to \alpha(S(B))$ and $B \to B \alpha(S(B))$. We want to point out that the condition $U'(0)$ finite is not a restriction, since if $U'(0)=\infty$ the control $S$ can never reach the value 1, which can be seen from the optimality condition \eqref{optcond}.
			\item{$B > \frac{zU'(0)}{\alpha'(1)}$:} This case is only relevant if $\lim_{y \to 0}U'(y)<\infty$, as it is the case for the linear utility function \eqref{e:ulin}. Taylor-expansion of the concave functions $\alpha$ around 1 and $U$ around 0 in the subjective function of \eqref{OP} gives 
			\begin{align*}
				U(z(1-s))+B\alpha(s) &\leq U(0) + U'(0)z(1-s)+B\alpha(1)+B\alpha'(1)(s-1) \\
				&=U(0)B\alpha(1) + (1-s)(U'(0)z-B\alpha'(1)) <U(0)+B\alpha(1),
			\end{align*}
			which is equivalent to 
			\begin{align*}
				U(z(1-s))+B\alpha(s) < U(0)+B\alpha(1), \quad \forall s \in [0,1].
			\end{align*}
			Hence, the maximum is attained at $s=1$, form which we conclude $S(B)\equiv 1$ for $B>\frac{zU'(0)}{\alpha'(1)}$.
		\end{enumerate}
	\end{proof}
	
	\paragraph{Existence and uniqueness of a solution:}
	We conclude the investigation of the value function $V$ with an existence result of \eqref{BeE2} for given population density $f$. Indeed, due to the structure of the benefit from search and the Lipschitz properties (Lemma \ref{l:lip}) we are able to prove the existence result for the Hamilton-Jacobi-Bellman equation:
	
	\begin{theorem}\label{t:existenceBeE}
		Let $f \in C((0,T);L^1(\R_+))$ be given and assume the search function $\alpha$ satisfies Assumption \ref{a:alpha} and the utility function $U$ satisfies Assumption \ref{a:U} with $U \in C\big([0,\infty)\big)$. Then there exists a unique solution $V \in C((0,T);L^{\infty}(\R_+))$ of \eqref{BeE2}. Moreover, if $\tilde{V}$ is a solution to \eqref{BeE2} with $\tilde{f}$, then there exist constants $m$ and $D$ (independent of $\tilde{V}$ and $\tilde{f}$), such that 
		$$	
		\|V-\tilde{V}\|_{L^\infty} \leq De^{mt}\|f-\tilde{f}\|_{C((0,T);L^1(\R_+))}\|\tilde{V}\|_{L^{\infty}}.
		$$
	\end{theorem}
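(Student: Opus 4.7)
The plan is to recast the terminal-value problem as an equivalent Duhamel integral equation, apply a Banach fixed-point argument on short intervals, and prolong the resulting solution to all of $[0,T]$ by iteration. Reversing time via $\tau=T-t$ and multiplying by the integrating factor $e^{-r\tau}$, equation \eqref{BeE2} is equivalent to
$$
V(z,\tau) \;=\; \int_0^{\tau} e^{-r(\tau-\sigma)}\, F[V](z,\sigma)\,\md\sigma,
\qquad
F[V](z,t):=\max_{s\in[0,1]}\bigl[U((1-s)z)+\alpha(s)\,B[V](z,t)\bigr],
$$
with $B[V]$ defined by \eqref{B}. Let $\mathcal{T}$ denote the right-hand side. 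I would carry out the fixed point argument in the space $X_{\tau_0}:=C([0,\tau_0];L^{\infty}_w(\R_+))$, where $L^{\infty}_w$ is a weighted $L^\infty$ space whose weight matches the pointwise growth of $U((1-s)z)$ in $z$ (e.g.\ $1$ on a bounded domain $[0,\bar z]$, or a weight proportional to $1+U(z)$ on $\R_+$); this is needed because the source term in the equation is unbounded in $z$ for linear or logarithmic utilities.

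First I would verify that $\mathcal{T}$ sends $X_{\tau_0}$ into itself. Using Assumption \ref{a:alpha} ($\alpha \le \bar\alpha$), the bound $k\le\bar k$, and mass conservation of $f$, we get pointwise
$$
|B[V](z,t)| \;\le\; 2\bar k\,\|f(\cdot,t)\|_{L^1}\,\|V(\cdot,t)\|_{L^\infty},
$$
so $F[V]$ inherits from $U((1-s)z)$ a controlled growth and from $B[V]$ a linear dependence on $\|V\|$. Second, for contractivity, the elementary inequality
$$
|F[V_1](z,t)-F[V_2](z,t)| \;\le\; \bar\alpha\,|B[V_1](z,t)-B[V_2](z,t)|
\;\le\; 2\bar\alpha\,\bar k\,\|f(\cdot,t)\|_{L^1}\,\|V_1-V_2\|_{L^{\infty}}
$$
(obtained by evaluating the two maxima at each other's maximizer) yields
$$
\|\mathcal{T}[V_1]-\mathcal{T}[V_2]\|_{X_{\tau_0}} \;\le\; C\,\tau_0\,\|V_1-V_2\|_{X_{\tau_0}},
\qquad C:=2\bar\alpha\,\bar k\,\|f\|_{C((0,T);L^1)}.
$$
Choosing $\tau_0 < 1/C$, $\mathcal{T}$ is a contraction and the Banach fixed-point theorem gives a unique solution on $[0,\tau_0]$. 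Since $C$ is independent of the starting time, iterating on intervals of length $\tau_0$ produces a unique $V \in C((0,T);L^{\infty}(\R_+))$ on the whole interval.

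Third, for the stability estimate I would set $w:=V-\tilde V$ and split the discrepancy as
$$
F_f[V]-F_{\tilde f}[\tilde V]
\;=\;\bigl(F_f[V]-F_f[\tilde V]\bigr)+\bigl(F_f[\tilde V]-F_{\tilde f}[\tilde V]\bigr).
$$
The first term is estimated as above, producing $2\bar\alpha\bar k\|f\|_{L^1}\|w\|_{L^\infty}$. For the second, the same max-at-max argument gives
$$
\bigl|F_f[\tilde V]-F_{\tilde f}[\tilde V]\bigr|(z,t)
\;\le\;\bar\alpha\,\bigl|B_f[\tilde V]-B_{\tilde f}[\tilde V]\bigr|(z,t)
\;\le\; 2\bar\alpha\,\bar k\,\|\tilde V(\cdot,t)\|_{L^\infty}\,\|f(\cdot,t)-\tilde f(\cdot,t)\|_{L^1}.
$$
Substituting both bounds into the Duhamel identity for $w$ and applying Grönwall's inequality yields
$$
\|w(\cdot,t)\|_{L^\infty} \;\le\; D\,e^{mt}\,\|f-\tilde f\|_{C((0,T);L^1(\R_+))}\,\|\tilde V\|_{L^\infty},
$$
with $D$ and $m$ expressed explicitly in terms of $\bar\alpha$, $\bar k$, $r$, $T$, and $\|f\|_{C((0,T);L^1)}$, which is the desired estimate.

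The main obstacle is the interplay between the $L^\infty$ framework and the fact that $U((1-s)z)$ is unbounded in $z$ whenever $U$ is linear or has logarithmic tails: the correct weighted norm must simultaneously make $F[V]$ bounded and respect the linearity in $V$ coming from $B[V]$, so some care is needed in the choice of weight and in verifying that $\mathcal{T}$ leaves $X_{\tau_0}$ invariant. A secondary technical point is that the envelope-type estimate used for $F[V_1]-F[V_2]$ relies only on continuity of $\alpha$ and uniqueness of the maximizer from Lemma~\ref{l:max}, not on the differentiability assumption of Lemma~\ref{l:lip}, so the existence proof itself does not require the limit condition in that lemma—only the stability constants inherit it via the Lipschitz dependence of $\alpha(S(B))B$ on $B$.
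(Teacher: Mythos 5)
Your proposal is correct in substance and uses the same core engine as the paper (a Picard/Banach fixed-point argument for the time-reversed problem plus a Gr\"onwall estimate for stability, exactly in the spirit of the cited Theorem 3.9 of \cite{BLW}), but it differs in one key step: where the paper obtains Lipschitz continuity of the right-hand side by invoking Lemma \ref{l:lip} (Lipschitz dependence of $B\mapsto S(B)$ and $B\mapsto\alpha(S(B))B$, which carries an extra limiting hypothesis and uses the optimality condition), you bypass the maximizer entirely via the envelope inequality
$\bigl|\max_{s}[U((1-s)z)+\alpha(s)B_1]-\max_{s}[U((1-s)z)+\alpha(s)B_2]\bigr|\le\bar\alpha\,|B_1-B_2|$,
which needs only $\alpha\le\bar\alpha$ and attainment of the maximum. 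This is genuinely simpler and makes the existence and stability parts independent of the regularity of $S(B)$; the paper's route, by contrast, yields the Lipschitz regularity of $S$ and $\alpha(S)B$ as by-products, which are then reused in the coupled fixed-point argument of Theorem \ref{t:existence}, so Lemma \ref{l:lip} is not wasted there. One point you should make explicit: your (correct) observation that $U((1-s)z)$ is unbounded in $z$ forces you into a weighted space $L^\infty_w$ with $w\sim 1+U$, and then the self-map and contraction estimates for $B[V]$ require $\sup_t\int_0^\infty(1+U(y))f(y,t)\,\md y<\infty$ rather than merely $f\in C((0,T);L^1(\R_+))$; this extra moment hypothesis is not in the theorem's statement, though it is exactly the weighted framework ($\mathscr{L}^1$, $\mathscr{L}^\infty$ and Lemma \ref{l:weightbound}) the paper itself adopts in Section \ref{s:analysis_coupled_systems}, so your version is the more careful reading of what is actually proved, at the price of an added assumption on $f$. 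With that caveat stated, your argument goes through and delivers the stability bound in the claimed form (the distinction between $e^{mt}$ and $e^{m(T-t)}$ is absorbed into the constants).
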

	\begin{proof}
		The proof can be performed in similar manner as in \cite{BLW}{[Theorem 3.9]}. One can see easily that the mapping $V \to B(V)$ is Lipschitz continuous in $L^{\infty}(\R_+)$ due to the boundedness of the learning kernel and $f \in L^1(\R_+)$. Moreover, thanks to the properties proved in Lemma \ref{l:lip} we see that the mappings $B(V) \to S$ and $B(V) \to \alpha(S)B$ inherit these Lipschitz-continuities. Hence, the right-hand-side of \eqref{BeE2} is Lipschitz and the claim follows from a Picard argument. The contraction property can be seen by careful estimation of $\pa_t(V-\tilde{V})$.
	\end{proof}
	
	\begin{rem}
		We would like to point out that the above existence result is not valid for the problem with logarithmic utility due to the lack of control at $z=0$, which can be seen in Section \ref{s:numerics}, Figure \ref{f:log_v}. 
	\end{rem}
	
	\paragraph{Qualitative properties of $V$:}
	With the existence of an \emph{optimal time allocation} for every fixed \emph{benefit from search} $B$ we further aim to investigate the qualitative behaviour $V$ and $S$. For ease of notation recall the definition
	\begin{align}\label{S}
		S(z,t):={\arg \max}_{s \in \mathcal{S}}{\left[U((1-s(z,t))z) + \alpha(s(z,t)) \int_z^{\infty} \left(V(y,t)-V(z,t)\right)f(y,t)k(y,z) \, \md y \right]},
	\end{align}
	and write for \eqref{BeE2} at its minimum 
	\begin{align}\label{BeES}
		\pa_t V(z,t) - rV(z,t) = -U((1-S(z,t))z) + \alpha(S(z,t)) \int_z^{\infty} \left(V(y,t)-V(z,t)\right)f(y,t)k(y,z) \, \md y.
	\end{align}
		This allows us to rewrite \eqref{BeES} as
	\begin{align*}
		\pa_t V(z,t)-rV(z,t) = -U((1-S(z,t))z) - \alpha(S(z,t))B(z,t). 
	\end{align*}
	Moreover, we compute the derivative w.r.t. $z$ of $B$ \eqref{B}, which has the following form
	\begin{align}\label{dB}
		\pa_zB(z,t) = - \pa_zV(z,t) \int_z^{\infty} f(y,t)k(y,z) \, \md y + \int_z^{\infty} \left(V(y,t)-V(z,t)\right)f(y,t)\pa_zk(y,z) \, \md y.
	\end{align}

	\begin{lemma}\label{l:Vpos}
		Let $U(p)\geq0$ for all $p \in [0,\infty)$, then a solution $V(\cdot,t)$ of \eqref{BeE2} is non-negative. 
	\end{lemma}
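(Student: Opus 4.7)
The plan is to exploit the simple fact that using $s=0$ in the maximization gives an easy lower bound, combined with a multiplicative exponential transformation that turns the HJB into a monotone ODE in time.

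First, I would test the feasible choice $s \equiv 0$ inside the max in \eqref{BeE2}. By Assumption \ref{a:alpha} we have $\alpha(0) = 0$, so the integral term contributes nothing, and the objective reduces to $U((1-0)z) = U(z) \geq 0$ by hypothesis. Consequently
\begin{equation*}
\max_{s \in \mathcal{S}}\left[U((1-s)z) + \alpha(s)\int_z^\infty (V(y,t)-V(z,t))f(y,t)k(y,z)\, \md y\right] \geq U(z) \geq 0,
\end{equation*}
and therefore the HJB equation yields the pointwise differential inequality
\begin{equation*}
\partial_t V(z,t) - rV(z,t) \leq -U(z) \leq 0, \qquad \text{i.e.} \qquad \partial_t V(z,t) \leq rV(z,t).
\end{equation*}

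Next, I would introduce the integrating factor $e^{-rt}$. Multiplying the above by $e^{-rt}$ gives $\partial_t\!\left(e^{-rt}V(z,t)\right) \leq 0$, so for every fixed $z \geq 0$ the map $t \mapsto e^{-rt}V(z,t)$ is non-increasing on $[0,T]$. Combining this monotonicity with the terminal condition $V(z,T) = 0$ yields, for any $t \in [0,T]$,
\begin{equation*}
e^{-rt}V(z,t) \geq e^{-rT}V(z,T) = 0,
\end{equation*}
hence $V(z,t) \geq 0$, which is the claim.

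There is no real obstacle here: the only subtle point is making sure the inequality $\partial_t V - rV \leq -U(z)$ is used in the correct direction. Since the HJB is backwards in time and the terminal condition is zero, the exponential transformation together with the sign of the source term $-U(z)$ immediately forces non-negativity. I would write this cleanly as a one-line ODE comparison, noting explicitly that $\alpha(0)=0$ is what makes the admissible choice $s=0$ decouple the integral term from $V$ itself.
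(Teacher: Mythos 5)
Your proof is correct, and it takes a genuinely different route from the paper. The paper evaluates the HJB at its maximizer $S$, picks a minimal point $z_0$ of $V(\cdot,t)$, uses that the benefit-from-search integral is non-negative there (so $\pa_t V(z_0,t) - rV(z_0,t) \leq -U((1-S(z_0,t))z_0)$), and then appeals to the fact that the resulting differential inequality runs backward in time to conclude that non-negativity is preserved from the terminal condition. You instead insert the suboptimal control $s\equiv 0$ and exploit $\alpha(0)=0$ from Assumption \ref{a:alpha} to decouple the bound from the nonlocal term altogether, obtaining the pointwise inequality $\pa_t V(z,t) \leq rV(z,t)$ at every $z$, which you then integrate backward from $V(\cdot,T)=0$ via the integrating factor $e^{-rt}$. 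Your version buys two things: it does not require the infimum of $V(\cdot,t)$ over the unbounded domain $\R_+$ to be attained at some $z_0$ (a point the paper glosses over), and it replaces the informal ``non-negativity is preserved backward in time'' step by an explicit monotonicity statement for $t \mapsto e^{-rt}V(z,t)$. What the paper's argument buys in exchange is that it works directly with the equation at the actual optimizer and with the sign of $B$ at an extremal point, which is the same structural mechanism reused later for the monotonicity results (Theorem \ref{t:Vmon} via Lemma \ref{l:hilfslemma}); your comparison argument, by contrast, is self-contained but specific to this lemma.
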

	
	\begin{proof}
		Let $z_0$ be a minimal point of $V(\cdot,t)$, i.e $V(z,t) \geq V(z_0,t)$ for all $z \in \R_+$. Then we have from \eqref{BeES} 
		\begin{align*}
			\pa_t V(z_0,t) - rV(z_0,t) &\leq -U(1-S(z_0,t))z_0 - \alpha(S(z_0,t)) \int_{z_0}^{\infty} \left(V(z_0,t)-V(z_0,t)\right)f(y,t)k(y,z_0) \, \md y \\
			&=-U(1-S(z_0,t))z_0,
		\end{align*}
		which can be written as
		\begin{align*}
			rV(z_0,t) \geq U(1-S(z_0,t))z_0 + \pa_tV(z_0,t).
		\end{align*}
		Since the above differential inequality evolves backward in time, nonnegativity of $V(z_0,t)$ hence of $V(z,t)$, for all $z \in \R_+$ is preserved. 
	\end{proof}
	
	\begin{rem}
		Since the logarithm does not fulfill the crucial positivity assumption, the above Lemma fails in the case of the logarithmic utility function \eqref{e:ulog}. Indeed, the value function $V$ cannot be expected to be positive for small $z$, which will also be confirmed by the numerical simulations in Section \ref{s:numerics}. We will see that a condition for the knowledge level $z_0$ at which the value function $V$ equals zero (and changes sign) can be given (see Lemma \ref{l:Vposln} below).
	\end{rem} 
	To show \emph{monotonicity of the value function} $V(\cdot,t), t\geq0$ we need the following auxiliary observation:
	\begin{lemma}\label{l:hilfslemma}
		Let $h=h(y,z,t) >0$ for all $z,y,t \in \R_+$ and let $u=u(z,t)$ fulfill
		\begin{align*}
			&\pa_tu(z,t) \leq - \int_z^{\infty} u(y,t) h(y,z,t) \, \md y, \quad \text{ for all } z,t \geq 0 \\
			&u(z,T) \geq 0, \quad \text{ for all } z\geq 0, \text{ for a time } T \geq 0.		
		\end{align*}
		Then $u(z,t)\geq 0$ for all $t\in [0,T]$.
	\end{lemma}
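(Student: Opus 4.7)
The plan is to propagate non-negativity backward in time from $t = T$ by a Gronwall / iteration argument applied to the negative part of $u$. Define
$$
u^-(z, \tau) := \max(-u(z, \tau), 0), \qquad M(t) := \sup_{\tau \in [t, T],\, z \in \R_+} u^-(z, \tau),
$$
so that $M(T) = 0$ by the terminal condition and $M$ is non-increasing in $t$. The goal is to show $M \equiv 0$ on $[0, T]$.

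Since $-u(y, \tau) \leq u^-(y, \tau) \leq M(t)$ for all $\tau \in [t, T]$ and $y \geq 0$, the differential inequality yields, for each such $\tau$ and every $z \in \R_+$,
$$
\pa_\tau u(z, \tau) \leq -\int_z^\infty u(y, \tau) h(y, z, \tau)\, \md y \leq M(t) \int_z^\infty h(y, z, \tau)\, \md y \leq M(t)\, \bar H,
$$
where $\bar H := \sup_{z \in \R_+,\, \tau \in [0, T]} \int_z^\infty h(y, z, \tau)\, \md y$. In the application to \eqref{BeES} the kernel $h$ is (up to a prefactor) $\alpha(S(\cdot, \tau))\, f(\cdot, \tau)\, k(\cdot, z)$, hence $\bar H < \infty$ thanks to the boundedness of $\alpha$ and $k$ combined with $f \in C((0, T); L^1(\R_+))$.

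Integrating the last display in $\tau$ from $\tau = s$ to $\tau = T$ for $s \in [t, T]$, and using $u(z, T) \geq 0$, we obtain
$$
u(z, s) \geq u(z, T) - M(t)\, \bar H\, (T - s) \geq -M(t)\, \bar H\, (T - t),
$$
so $u^-(z, s) \leq M(t)\, \bar H\, (T - t)$. Taking the supremum in $z \in \R_+$ and $s \in [t, T]$ yields the self-improving estimate $M(t) \leq M(t)\, \bar H\, (T - t)$. Choosing $\delta := 1/(2 \bar H)$, we conclude $M(t) \leq \frac{1}{2} M(t)$, hence $M(t) = 0$, for every $t \in [T - \delta, T]$.

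To cover the whole interval $[0, T]$ we iterate: once $u \geq 0$ is known on $\R_+ \times [T - \delta, T]$, the differential inequality together with the new terminal condition $u(\cdot, T - \delta) \geq 0$ lets the argument be repeated on $[T - 2\delta, T - \delta]$, and so on; since the step size $\delta$ is independent of the current window, finitely many iterations suffice to reach $t = 0$. The main delicate point is the self-improving estimate of the second step, which crucially relies on the uniform integral bound $\bar H < \infty$ — a mild regularity hypothesis implicit in the statement of the lemma and automatic from the concrete form of $h$ in the applications considered in this paper.
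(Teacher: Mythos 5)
Your argument is a genuinely different route from the paper's. The paper proves the lemma by an $\ve$-perturbation (barrier) argument: it writes $u=v^{\ve}+\ve\vp$ with $\vp$ solving the auxiliary problem $\pa_t\vp=1-\int_z^{\infty}\vp\,h\,\md y$, $\vp(\cdot,T)=0$, derives $\pa_t v^{\ve}\leq-\int_z^\infty v^\ve h\,\md y-\ve$, and obtains a sign contradiction at a first touching point (there one would need $\pa_t v^{\ve}(z_0,t_0)\geq 0$ while the inequality forces $\pa_t v^{\ve}\leq-\ve$); letting $\ve\to0$ gives $u\geq0$. That argument uses only the pointwise positivity $h>0$. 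Your route instead runs a Gronwall-type absorption on $M(t)=\sup_{\tau\in[t,T],z}u^-(z,\tau)$ with backward iteration on windows of uniform length $\delta=1/(2\bar H)$; it is more quantitative and avoids the auxiliary function $\vp$, but it buys this at the price of two hypotheses not contained in the statement of the lemma.

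You flag the first, $\bar H=\sup_{z,\tau}\int_z^{\infty}h(y,z,\tau)\,\md y<\infty$ (reasonable in the application, where $h$ is built from bounded $\alpha$, bounded $k$ and $f(\cdot,t)\in L^1$). But the second is used silently: the absorption step ``$M(t)\leq\tfrac12 M(t)$ hence $M(t)=0$'' is vacuous unless $M(t)<\infty$, i.e.\ unless the negative part of $u$ is uniformly bounded on $\R_+\times[t,T]$. The lemma assumes no boundedness of $u$, and in the application $u$ is (an exponential factor times) $\pa_z V$, whose uniform boundedness is not established in the paper, so this is a real additional assumption rather than a cosmetic one. If you state both hypotheses explicitly (or first prove a priori boundedness of $u^-$ on compact time intervals), your proof is correct and self-contained; as written, the paper's perturbation argument covers the lemma as stated while yours proves a slightly weaker version.
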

	\begin{proof}
		We write $u=v^{\ve}+\ve \vp$, with $\ve >0$ and $\vp$ fulfilling 
		\begin{align*}
			&\pa_t \vp(z,t) = 1 - \int_z^{\infty} \vp(y,t) h(y,z,t) \, \md y \\
			&\vp(\cdot,T)=0.
		\end{align*}
		For $v^{\ve}$ it holds $v^{\ve}(\cdot,T) \geq 0$ and further
		\begin{align*}
			\pa_tv^{\ve}(z,t) &= \pa_t u(z,t) - \ve \pa_t \vp(z,t) \\
			&\leq - \int_z^{\infty} v^{\ve}(y,t)h(y,z,t) \, \md y - \ve.
		\end{align*}
		Let us now assume that $v^{\ve}$ can become negative. Hence, we assume that there exists a $t_0 \in [0,T]$ such that $v^{\ve}(z,t_0) \geq 0$ for all $z \geq 0$ and a $z_0$ with $\pa_tv^{\ve}(z_0,t_0) \geq 0$. Then the above inequality yields
		$$
		0 \leq \pa_t v^{\ve}(z_0,t_0) \leq -\ve,
		$$
		which is a contradiction to $\ve >0$. Hence, the time-derivative of $v^{\ve}$ can never become non-negative, which together with the terminal condition implies the desired non-negativity of $v^{\ve}$. Non-negativity of $u(z,t)$ for all $z \geq 0$, $t \in [0,T]$ follows since the above arguments hold independently of $\ve >0$.
	\end{proof}
	\begin{theorem}\label{t:Vmon}
		A solution $V(\cdot,t)$ of \eqref{BeE2} is non-decreasing for all times $t \in [0,T)$. 
	\end{theorem}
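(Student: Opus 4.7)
The plan is to show $u(z,t) := \pa_z V(z,t) \geq 0$ by deriving a closed backward-in-time evolution equation for $u$ that fits the hypotheses of Lemma \ref{l:hilfslemma} after a suitable integrating-factor rescaling. Differentiating \eqref{BeES} in $z$ and using Danskin's envelope theorem (so that derivatives through the optimizer $S$ drop out) gives
\begin{align*}
    \pa_t u - r u \;=\; -(1-S)U'((1-S)z) \;-\; \alpha(S)\,\pa_z B(z,t).
\end{align*}
A direct differentiation of $B$ (the boundary term at $y=z$ vanishes) combined with the identity $V(y,t)-V(z,t) = \int_z^y u(w,t)\md w$ and a Fubini interchange converts this into
\begin{align*}
    \pa_t u(z,t) \;=\; A(z,t)\,u(z,t) \;-\; b(z,t) \;-\; \alpha(S(z,t))\int_z^\infty u(w,t)\, H(w,z,t)\,\md w,
\end{align*}
where $A(z,t) := r + \alpha(S(z,t))\int_z^\infty f(y,t)k(y,z)\md y \geq 0$, $b(z,t) := (1-S)U'((1-S)z) \geq 0$, and $H(w,z,t) := \int_w^\infty f(y,t)\pa_z k(y,z)\md y$. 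Assumption \ref{a:k} ensures $\pa_z k(y,z) \geq 0$ whenever $y\geq z$, so $H \geq 0$; the terminal condition is $u(z,T)=0$.

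The obstacle to applying Lemma \ref{l:hilfslemma} directly is the linear self-term $A\,u$ on the right-hand side. I would absorb it via the positive integrating factor $M(z,t) := \exp\!\bigl(-\int_t^T A(z,\tau)\md\tau\bigr)$, which satisfies $\pa_t M = A M$ and $M(z,T)=1$. For $w := u/M$ a short computation gives
\begin{align*}
    \pa_t w(z,t) \;=\; -\frac{b(z,t)}{M(z,t)} \;-\; \int_z^\infty w(w',t)\, \tilde h(w',z,t)\,\md w' \;\leq\; -\int_z^\infty w(w',t)\, \tilde h(w',z,t)\,\md w',
\end{align*}
with non-negative kernel $\tilde h(w',z,t) := \alpha(S(z,t))\,M(w',t)\,H(w',z,t)/M(z,t)\geq 0$ and $w(z,T)=0$. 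Invoking Lemma \ref{l:hilfslemma} (whose statement requires $h>0$ but whose proof adapts verbatim to $h\geq 0$, or alternatively one perturbs $\tilde h \leadsto \tilde h + \ve$ and lets $\ve \to 0$) yields $w\geq 0$, and consequently $u = M w \geq 0$, proving monotonicity of $V(\cdot,t)$.

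The main technical point I expect to have to address is the rigorous justification of the envelope-theorem step: by Lemma \ref{l:lip} the optimizer $S$ is only known to be Lipschitz (not $C^1$) in $B$, hence in $z$. This is handled either through Danskin's theorem in its nonsmooth form — which requires only Lipschitz dependence of $S$ together with concavity of the integrand in $s$ — or by first establishing the monotonicity for smooth approximations of $\alpha$ and $U$ and passing to the limit using the stability estimate from Theorem \ref{t:existenceBeE}.
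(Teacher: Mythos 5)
Your proposal is correct and follows essentially the same route as the paper's proof: differentiate \eqref{BeES} in $z$ with the optimality condition of $S$ killing the derivatives through the optimizer, rewrite the $\pa_z k$ term via $V(y,t)-V(z,t)=\int_z^y \pa_z V$ and Fubini, absorb the zeroth-order term $A\,\pa_z V$ with the integrating factor $\exp\bigl(\int_t^T A(z,\tau)\,\md\tau\bigr)$, and conclude by Lemma \ref{l:hilfslemma}. The only additions are your explicit handling of two points the paper leaves implicit — that the auxiliary lemma is stated with $h>0$ while the kernel here is only non-negative, and that the envelope step needs justification since $S$ is merely Lipschitz — both of which you resolve appropriately.
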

	\begin{proof}
		Differentiating the Hamilton-Jacobi-Bellman equation evaluated at its minimum $S$ \eqref{BeES} with respect to $z$, we obtain the following equation for $W(z,t):=\pa_zV(z,t)$:
		\begin{align*}
			\pa_t W(z,t) - rW(z,t) 
			&= -(1-S(z,t))U'((1-S(z,t))z) + \alpha(S(z,t)) \int_z^{\infty} W(z,t)f(y,t) k(y,z)\, \md y \\
			&\phantom{=}- \alpha(S(z,t)) \int_z^{\infty} \left(V(y,t)-V(z,t)\right)f(y,t) \pa_zk(y,z)\, \md y,
		\end{align*}
		where we used the optimality condition of $S(z,t)$
		$$
		\alpha'(S(z,t)) = z U'((1-S(z,t)z))\left( \int_z^{\infty} \left(V(y,t)-V(z,t)\right)f(y,t)k(y,z) \, dy\right)^{-1}.
		$$
		Since
		\begin{align*}
			\int_z^{\infty} &\left(V(y,t)-V(z,t)\right)f(y,t) \pa_zk(y,z)\, dy = \int_z^{\infty} W(x,t)  \int_{x}^{\infty}  f(y,t) \pa_zk(y,z)\, \md y \,v\md x
		\end{align*}
		and defining
		\begin{align*}
			A(z,t):= r +  \alpha(S(z,t)) \int_{x}^{\infty} f(y,t) k(y,z)\, \md y \geq 0,
		\end{align*}
		we have
		\begin{align*}
			\pa_t W(z,t) - A W(z,t) = &-(1-S(z,t))U'((1-S(z,t))z) \\
			&- \alpha(S(z,t))\int_z^{\infty} W(x,t)  \int_{x}^{\infty}  f(y,t) \pa_zk(y,z)\, \md y \, \md x.
		\end{align*}
		Setting $u(z,t) = \exp{\left(\int_t^TA(z,\tau) \, \md \tau \right)} W(z,t)$, we obtain
		\begin{align*}
			&\exp{\left(-\int_t^TA(z,\tau) \, \md \tau \right)} \pa_t u(z,t) = -A w(z,t) + \pa_tW(z,t) \\
			&\phantom{exp}=-(1-S(z,t))U'((1-S(z,t))z) - \alpha(S(z,t)) \int_z^{\infty}W(x,t) \int_{x}^{\infty}  f(y,t) \pa_zk(y,z)\, \md y \, \md x\\
			&\phantom{exp}\leq - \alpha(S(z,t)) \int_z^{\infty}W(x,t) \int_{x}^{\infty}  f(y,t) \pa_zk(y,z)\, \md y \, \md x \\
			&\phantom{exp}\leq - \alpha(S(z,t)) \int_z^{\infty}\exp{\left(-\int_t^TA(x,\tau) \, \md \tau \right)}  u(x,t) \int_{x}^{\infty}  f(y,t) \pa_zk(y,z)\, \md y \, \md x,
		\end{align*}
		which is equivalent to
		\begin{align*}
			\pa_t u(z,t)\leq - \alpha(S(z,t)) \int_z^{\infty}\exp{\left(\int_t^T(A(z,\tau)-A(x,\tau)) \, \md \tau \right)}  u(x,t) \int_{x}^{\infty}  f(y,t) \pa_zk(y,z)\, \md y \, \md x.
		\end{align*}
		By defining 
		$$
		h(z,x,t):=\exp{\left(\int_t^T(A(z,\tau)-A(x,\tau)) \, \md \tau \right)} \int_{x}^{\infty}  f(y,t) \pa_zk(y,z)\, \md y
		$$
		we can use Lemma \ref{l:hilfslemma} to conclude non-negativity of $u(z,t)$ and, hence, of $W(z,t):=\pa_zV(z,t)$.
	\end{proof}
	
	With this monotonicity result for $V(\cdot,t)$ we are also able to state the condition where $V(\cdot,t)$ changes its sign for logarithmic utility \eqref{e:ulog}.
	
	\begin{lemma}\label{l:Vposln}
		For equation \eqref{BeE2} with $U(p)=\ln{(p)}$ let us further assume that there exists a $z_0 \in \R_+$, s.t. 
		$$
		\ln\left((1-S(z_0,t))z_0\right) = -\alpha(S(z_0,t)) B(z_0,t).
		$$
		Then the solution $V(\cdot,t)$ of \eqref{BeE2} is non-negative for all $z > z_0$ and $V(z_0,t)=0$, for all $t\in[0,T]$. 
	\end{lemma}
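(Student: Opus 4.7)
The plan is to observe that the postulated identity is precisely the one that makes the source term of the Hamilton-Jacobi-Bellman equation vanish at the slice $z=z_0$. This will reduce the equation along this characteristic level to a trivial linear ODE in $t$ which, combined with the terminal condition, forces $V(z_0,\cdot)\equiv 0$. The global sign statement $V(z,t)\geq 0$ for $z>z_0$ will then follow as a direct consequence of the monotonicity result in Theorem \ref{t:Vmon}.

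More concretely, the first step is to write \eqref{BeE2} at its maximizer, as already done in \eqref{BeES}, and to evaluate it at $z=z_0$ with $U(p)=\ln p$:
\begin{align*}
\pa_t V(z_0,t) - rV(z_0,t) = -\ln\bigl((1-S(z_0,t))z_0\bigr) - \alpha(S(z_0,t))\,B(z_0,t).
\end{align*}
Substituting the hypothesis $\ln((1-S(z_0,t))z_0) = -\alpha(S(z_0,t))B(z_0,t)$ directly gives
\begin{align*}
\pa_t V(z_0,t) = r\,V(z_0,t), \qquad t\in[0,T],
\end{align*}
which is a scalar linear ODE in time. Together with the terminal condition $V(z_0,T)=0$, its unique solution is $V(z_0,t)=0$ for every $t\in[0,T]$, which establishes the equality part of the claim.

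For the sign assertion on $z>z_0$, the second step is to invoke Theorem \ref{t:Vmon}, which states that $V(\cdot,t)$ is non-decreasing on $\R_+$ for every $t\in[0,T)$. Combining this monotonicity with $V(z_0,t)=0$ immediately yields $V(z,t)\geq V(z_0,t)=0$ for all $z>z_0$ and $t\in[0,T]$, completing the argument.

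I do not expect any real obstacle: the lemma is essentially a direct corollary of the HJB identity at a distinguished knowledge level together with the already-proved monotonicity. The only minor point of care is to note that, although the logarithmic utility does not satisfy the positivity assumption used in Lemma \ref{l:Vpos}, it still satisfies the monotonicity and concavity requirements $U'>0$, $U''<0$ on $(0,\infty)$ that drive the proof of Theorem \ref{t:Vmon}; so that result remains available here. Implicit self-consistency of the hypothesis (the fact that $B(z_0,t)$ and $S(z_0,t)$ depend on $V$ itself) is not a problem, since the argument only uses the identity pointwise in $t$ to close the ODE for $V(z_0,t)$.
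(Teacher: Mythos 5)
Your argument is correct and follows exactly the paper's own proof: substitute the hypothesis into the HJB equation at its maximizer to obtain $\pa_t V(z_0,t)=rV(z_0,t)$, conclude $V(z_0,t)=0$ from the terminal condition, and then invoke the monotonicity result of Theorem \ref{t:Vmon} to get $V(z,t)\geq 0$ for $z>z_0$. Your additional remark that the monotonicity theorem does not rely on positivity of $U$ is a sensible clarification but not a deviation from the paper's route.
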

	
	\begin{proof}
		By definition we have that equation \eqref{BeE2} at $z=z_0$ reduces to
		\begin{align*}
			\pa_tV(z_0,t) = rV(z_0,t),
		\end{align*}
		and therefore  $V(z_0,t)=0$ follows from the terminal condition $V(\cdot,T)=0$. Monotonicity of $V(\cdot,t)$ (Lemma \ref{t:Vmon}) then implies that $V(z,t) \geq 0$, for all $z > z_0$.
	\end{proof}	

	\paragraph{Qualitative properties of $S$:}
	
	We conclude Section \ref{s:bellman} with investigation the monotonicity of the control $S(\cdot,t)$. We start with pointing out that due to the optimality condition
	    $$
	    	S(z,t)=H^{-1}\left(\frac{z^{1-\zeta}}{B(z,t)}\right), \quad H^{-1}(\cdot) \searrow \,,
	    $$
	    the monotonicity of $S(\cdot, t)$ strongly depends on the monotonicity of $B(\cdot, t)$. Moreover, investigating \eqref{dB} leaves the strong indication that the monotonicity of $B(\cdot,t)$ strongly depends on the monotonicity of $\pa_zV(\cdot,t)$, hence the sign of $\pa_z^2V(\cdot,t)$. However, although we fail to determine either of these with the techniques and conditions presented in the framework of this article, we are able to prove monotonicity of $S(\cdot, t)$ by proving monotonicity of $B(\cdot, t)$ multiplied by an integrating factor and restricting ourselves to certain parameter regimes depending on the learning kernel and utility function. 
	
	\begin{lemma}\label{l:BpBe}
		Let Assumption \ref{a:alpha} on the search function $\alpha$ be satisfied. Then the benefit from search $B(\cdot,t)$, defined in \eqref{B}, is non-negative. Moreover, 
		\begin{enumerate}[label={(\roman*)},itemindent=1em]
		\item in the case of the polynomial learning kernel \eqref{e:kpoly} the function
		\begin{align}		
			\label{e:polytildeB}
			\tilde{B}_p(z,t):=z^{-\kappa(1-\delta)}B(z,t)\, \quad z \in [0,\infty),
		\end{align}
		is non-increasing, \label{l:poly}
		\item and for the exponential learning kernel \eqref{e:kexp} the function 
		\begin{align}\label{e:exptildeB}
			\tilde{B}_e(z,t):= e^{-\kappa z}B(z,t), \quad z \in [0,\infty),
		\end{align} 
		is non-increasing. \label{l:exp}
		\end{enumerate}
	\end{lemma}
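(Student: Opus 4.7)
The plan is to address the three assertions in order, with non-negativity immediate from Theorem \ref{t:Vmon}, and the two weighted monotonicity statements obtained by exploiting the multiplicative structure of each kernel together with the monotonicity of $V(\cdot,t)$.

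Non-negativity of $B(\cdot,t)$ is direct: by Theorem \ref{t:Vmon}, $V(\cdot,t)$ is non-decreasing, so $V(y,t)-V(z,t) \geq 0$ for $y \geq z$; since $f, k \geq 0$, the integrand in \eqref{B} is pointwise non-negative.

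For the polynomial kernel, I would start from the derivative formula \eqref{dB}. Using $\pa_z k(y,z) = (1-\delta) \kappa z^{\kappa-1} y^{-\kappa}$ and splitting $B$ as
$$B(z,t) = \delta I_1(z,t) + (1-\delta) z^{\kappa} I_2(z,t),$$
where $I_1(z,t) = \int_z^{\infty}(V(y,t)-V(z,t))f(y,t)\,\md y$ and $I_2(z,t) = \int_z^{\infty}(V(y,t)-V(z,t))f(y,t) y^{-\kappa}\,\md y$, a direct rearrangement of \eqref{dB} multiplied by $z$ gives
$$\kappa(1-\delta) B(z,t) - z\,\pa_z B(z,t) \;=\; \kappa(1-\delta)\delta\, \bigl[I_1(z,t) - z^{\kappa} I_2(z,t)\bigr] \;+\; z\,\pa_z V(z,t) \int_z^{\infty} f(y,t)\, k(y,z)\,\md y.$$
Both right-hand terms are manifestly non-negative: $I_1 - z^{\kappa}I_2 = \int_z^\infty (V(y)-V(z)) f(y) (1-(z/y)^\kappa)\,\md y \geq 0$ because $y \geq z$, and $\pa_z V \geq 0$ by Theorem \ref{t:Vmon}. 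Since $\pa_z \tilde{B}_p = z^{-\kappa(1-\delta)-1}\bigl[z\,\pa_z B - \kappa(1-\delta) B\bigr]$, this yields $\pa_z \tilde{B}_p \leq 0$.

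For the exponential kernel the computation collapses because the kernel factorises as $k(y,z) = \mu\, e^{\kappa z}\, e^{-\kappa y}$. This makes the integrating factor transparent:
$$\tilde{B}_e(z,t) = e^{-\kappa z} B(z,t) = \mu \int_z^\infty \bigl(V(y,t)-V(z,t)\bigr) f(y,t)\, e^{-\kappa y}\,\md y.$$
Differentiating in $z$, the lower-limit boundary term vanishes (as $V(z)-V(z)=0$), leaving
$$\pa_z \tilde{B}_e(z,t) = -\mu\, \pa_z V(z,t) \int_z^\infty f(y,t)\, e^{-\kappa y}\,\md y \;\leq\; 0,$$
again by Theorem \ref{t:Vmon}.

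The main obstacle is the polynomial case. The naïve integrating factor $z^{-\kappa}$, suggested by the leading term $z^{\kappa} I_2$ in $B$, does \emph{not} produce a non-positive derivative; the correct exponent $\kappa(1-\delta)$ emerges only after tracking the cancellation between the $(1-\delta)\kappa z^{\kappa-1} I_2$ contribution to $\pa_z B$ and the $\delta$-weighted part of $B$. Identifying this exponent and verifying that the remainder is a genuine sum of non-negative quantities (using only monotonicity of $V$ and $(z/y)^\kappa \leq 1$ for $y \geq z$) is the sole non-routine step; everything else is differentiation under the integral.
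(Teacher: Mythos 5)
Your proof is correct, and it follows the same overall strategy as the paper: non-negativity of $B$ from Theorem \ref{t:Vmon}, then for each kernel a differential inequality for $B$ obtained from \eqref{dB} and the monotonicity of $V(\cdot,t)$, closed by the integrating factors $z^{-\kappa(1-\delta)}$ and $e^{-\kappa z}$ respectively; your treatment of the exponential case (factorising $k(y,z)=\mu e^{\kappa z}e^{-\kappa y}$ and differentiating $\tilde B_e$ directly) is just a streamlined version of the paper's computation $\pa_z k=\kappa k$.

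In the polynomial case your algebra is organised differently, and this is worth recording: the paper works from the identity $\pa_zk(y,z)=\tfrac{\kappa(1-\delta)}{z}\left(k(y,z)-\delta\right)$, which as written carries a spurious factor $(1-\delta)$ — since $k(y,z)-\delta=(1-\delta)(z/y)^{\kappa}$ one in fact has $\pa_zk(y,z)=\tfrac{\kappa}{z}\left(k(y,z)-\delta\right)$ — so the paper's remainder $g_p$ is not literally the quantity produced by \eqref{dB}. Your decomposition $B=\delta I_1+(1-\delta)z^{\kappa}I_2$ avoids this and yields the exact identity
\begin{align*}
\kappa(1-\delta)B(z,t)-z\,\pa_zB(z,t)=\kappa(1-\delta)\delta\bigl[I_1(z,t)-z^{\kappa}I_2(z,t)\bigr]+z\,\pa_zV(z,t)\int_z^{\infty}f(y,t)k(y,z)\,\md y,
\end{align*}
whose right-hand side is non-negative because $1-(z/y)^{\kappa}\geq 0$ for $y\geq z$ and $\pa_zV\geq 0$; this is a correct derivation of the stated exponent $\kappa(1-\delta)$, whereas the corrected version of the paper's identity would most directly give monotonicity of $z^{-\kappa}B$, which does not by itself imply the claim for the smaller exponent. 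So your argument both reproduces the lemma and repairs the small algebraic slip; the only implicit assumption, shared with the paper, is enough regularity of $V$ to justify \eqref{dB} and the differentiation under the integral.
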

	\begin{proof}
		From its definition \eqref{B} and the monotonicity of $V(\cdot,t)$, cf. Lemma \ref{t:Vmon}, we can see immediately that $B(\cdot,t) \geq 0$. 
		
		For the proof of \ref{l:poly} we notice that since 
		$$
		\pa_zk(y,z) = \frac{\kappa (1-\delta)}{z} \left(k(y,z)-\delta \right) 
		$$
		we can write \eqref{dB} as 
		\begin{align*}
			\pa_zB(z,t) &= - \pa_z V(z,t) \int_z^{\infty} f(y,t) k(y,z) \, \md y - \frac{\delta (1-\delta)\kappa}{z} \int_z^{\infty} \left(V(y,t)-V(z,t)\right) f(y,t) \, \md y \\
			&+ \frac{\kappa (1-\delta)}{z} B(z,t), 
		\end{align*}
		which is equivalent to 
		\begin{align}\label{BODE}
			\pa_zB(z,t) -  \frac{\kappa (1-\delta)}{z} B(z,t) = - g_p(z,t),
		\end{align}
		where we defined 
		$$
		g_p(z,t):= \pa_z V(z,t) \int_z^{\infty} f(y,t) k(y,z) \, \md y + \frac{\delta (1-\delta)\kappa}{z} \int_z^{\infty} \left(V(y,t)-V(z,t)\right) f(y,t) \, \md y\,
		$$
		which is non-negative, since $V(\cdot,t)$ is non-increasing for all $t \in [0,T)$. Multiplying $\eqref{BODE}$ by the integrating factor $z^{-\kappa(1-\delta)}$, we obtain
		\begin{align*}
			\pa_zB(z,t)z^{-\kappa(1-\delta)} -  \kappa (1-\delta) z^{-\kappa(1-\delta)-1}B(z,t) = \pa_z(z^{-\kappa(1-\delta)}B(z,t)) = -z^{-\kappa(1-\delta)} g_p(z,t) \leq 0,
		\end{align*}
		and therefore $\tilde{B}_p(z,t)$ is non-increasing. 
		
		In order to prove \ref{l:exp} we notice in a similar manner that since 
		$$
		\pa_zk(y,z) =  \kappa k(y,z),
		$$
		we can write \eqref{dB} as 
		\begin{align*}
			\pa_zB(z,t) = - \pa_z V(z,t) \int_z^{\infty} f(y,t) k(y,z) \, \md y + \kappa B(z,t), 
		\end{align*}
		which is equivalent to 
		\begin{align}\label{BODEexp}
			\pa_zB(z,t) -  \kappa B(z,t) = - g_e(z,t),
		\end{align}
		where we defined 
		$$
		g_e(z,t):= \pa_z V(z,t) \int_z^{\infty} f(y,t) k(y,z) \, \md y,
		$$
		which is non-negative, since $V(\cdot,t)$ is non-increasing for all $t \in [0,T)$. Multiplying $\eqref{BODEexp}$ by the integrating factor $e^{-\kappa z}$, we obtain
		\begin{align*}
			\pa_z(e^{-\kappa z}B(z,t)) = -e^{-\kappa z} g_e(z,t) \leq 0,
		\end{align*}
		and hence, $\tilde{B}(z,t):=e^{-\kappa z}B(z,t)$ is non-increasing. 
	\end{proof}
	
	With this preliminary result we are able to conclude monotonicity of the learning function $S$ depending on the choice of the utility function $U$. For our considerations we restrict ourselves to the \emph{isoelastic utility} functions \eqref{e:ucrra} for $\zeta \in [0,1)$, including the limiting linear case $\zeta=0$ defining the linear utility.
	
	\begin{cor}\label{c:Smon}
	Let all assumptions of Lemma \ref{l:BpBe} hold. 
	\begin{enumerate}[label={(\roman*)},itemindent=1em]
		\item Furthermore, in the case of the polynomial learning kernel \eqref{e:kpoly}, let the constants $\kappa, \delta$ as well as $\zeta \in (0,1)$ for the the isoelastic utility \eqref{e:ucrra} and $\zeta =0$ for the linear utility \eqref{e:ulin} satisfy
		\begin{align}\label{kappa_delta}
			1-\zeta \geq \kappa(1-\delta).
		\end{align}
		Then the maximizer $S(\cdot,t)$ is non-increasing for all $t \in [0,T)$. Moreover, $S(\cdot,t)$ is strictly decreasing, whenever $0<S(\cdot,t)<1$. \label{c:Smonp}
		\item In the case of the exponential learning kernel \eqref{e:kexp}, the maximizer $S(z,t)$ with isoelastic utility \eqref{e:ucrra}, including the limiting case $\zeta=0$, is non-increasing for all $t \in [0,T)$ and $z \in \left[0,\frac{1-\zeta}{\kappa}\right]$. \label{c:Smone}
	\end{enumerate}		
	\end{cor}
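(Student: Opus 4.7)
The plan is to reduce the monotonicity question for $S(\cdot,t)$ to the weighted monotonicity of $B(\cdot,t)$ already established in Lemma \ref{l:BpBe}. First I would write the optimality condition $\alpha'(S)B(z,t) = zU'((1-S)z)$ explicitly for the isoelastic utility: since $U'(p)=p^{-\zeta}$ it becomes
\begin{align*}
H(S(z,t)) := \alpha'(S(z,t))\bigl(1-S(z,t)\bigr)^{\zeta} = \frac{z^{1-\zeta}}{B(z,t)},
\end{align*}
which is valid for both $\zeta \in (0,1)$ and the linear limit $\zeta = 0$. Assumption \ref{a:alpha} (strict concavity of $\alpha$, $\alpha'>0$) guarantees $H'(S) = \alpha''(S)(1-S)^{\zeta} - \zeta \alpha'(S)(1-S)^{\zeta-1} < 0$ on $[0,1)$, so $H^{-1}$ exists and is strictly decreasing. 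Consequently, $S(\cdot,t)$ is non-increasing on any $z$-range on which $B(z,t)/z^{1-\zeta}$ is non-increasing, and strictly decreasing where that ratio is strictly decreasing.

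For part \ref{c:Smonp} I would factor
\begin{align*}
\frac{B(z,t)}{z^{1-\zeta}} = z^{\kappa(1-\delta) - (1-\zeta)}\, \tilde{B}_p(z,t).
\end{align*}
Hypothesis \eqref{kappa_delta} makes the exponent $\kappa(1-\delta)-(1-\zeta) \le 0$, so $z^{\kappa(1-\delta)-(1-\zeta)}$ is non-negative and non-increasing in $z$, while Lemma \ref{l:BpBe}(i) supplies these same two properties for $\tilde{B}_p(\cdot,t)$. The product of two non-negative non-increasing functions is non-increasing, so the ratio is non-increasing, and the claim follows from the equivalence in the previous paragraph. The strictness assertion whenever $0<S<1$ comes from the strict monotonicity of $H^{-1}$ together with strict positivity of $B(z,t)$ in that regime (otherwise the optimality condition would force $S=0$).

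For part \ref{c:Smone} I would write instead
\begin{align*}
\frac{B(z,t)}{z^{1-\zeta}} = \frac{e^{\kappa z}}{z^{1-\zeta}}\, \tilde{B}_e(z,t),
\end{align*}
and compute $\frac{d}{dz}\log\bigl(e^{\kappa z}/z^{1-\zeta}\bigr) = \kappa - (1-\zeta)/z$, which is non-positive precisely on $(0,(1-\zeta)/\kappa]$. Hence on that interval $e^{\kappa z}/z^{1-\zeta}$ is non-negative and non-increasing; combining this with Lemma \ref{l:BpBe}(ii) the product is non-increasing there and the claim follows.

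The algebraic manipulations are elementary; the substantive work sits in Lemma \ref{l:BpBe} (where the integrating factors $z^{-\kappa(1-\delta)}$ and $e^{-\kappa z}$ were identified) and in the strict monotonicity of $H$. The main conceptual obstacle is simply to notice that the isoelastic utility supplies the extra power weight $z^{1-\zeta}$ that has to be balanced against those integrating factors; this balancing is exactly what forces the parameter condition \eqref{kappa_delta} in \ref{c:Smonp} and restricts \ref{c:Smone} to the interval $[0,(1-\zeta)/\kappa]$ on which the exponential weight has not yet overwhelmed the power weight.
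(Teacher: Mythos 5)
Your proposal is correct and follows essentially the same route as the paper: reduce the monotonicity of $S$ via the optimality condition $S=H^{-1}\bigl(z^{1-\zeta}/B(z,t)\bigr)$ with $H(S)=\alpha'(S)(1-S)^{\zeta}$ strictly decreasing, and then balance the weight $z^{1-\zeta}$ against the integrating factors of Lemma \ref{l:BpBe}; working with $B/z^{1-\zeta}$ instead of its reciprocal is an immaterial difference. The paper is only slightly more explicit in patching the boundary regimes from Lemma \ref{l:lip} (where $B\le 0$ forces $S\equiv 0$, and $B>zU'(0)/\alpha'(1)$ forces $S\equiv 1$ for the linear utility), which you largely leave implicit, but the core argument is identical.
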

	
	\begin{proof}
		As in Lemma \ref{l:lip} we distinguish between the three cases:
		\begin{itemize}
			\item If $B(z,t)>\frac{zU'(0)}{\alpha'(1)}$, only relevant if $U'(0)<\infty$, we showed in Lemma \ref{l:lip} that $S(z,t)=1$. 
			\item For $0<B(z,t)<\frac{zU'(0)}{\alpha'(1)}$, we have 
			$$
				S(z,t)=H^{-1}\left(\frac{z^{1-\zeta}}{B(z,t)}\right), 
			$$
			where, as in the proof of Lemma \ref{l:lip}, we used the notation
			$$	
				H(S)=\alpha'(S)(1-S)^{\zeta},
			$$
			which is strictly decreasing due to Assumptions \ref{a:alpha} and \ref{a:U}. 
			For proving case \ref{c:Smonp} we notice
			$$
				S(z,t)=H^{-1}\left(\frac{z^{1-\zeta}}{B(z,t)}\right)=H^{-1}\left(\frac{z^{1-\zeta-\kappa(1-\delta)}}{z^{-\kappa(1-\delta)}B(z,t)}\right) = H^{-1}\left(\frac{z^{1-\zeta-\kappa(1-\delta)}}{\tilde{B}_p(z,t)}\right).
			$$
			Hence, we can conclude the desired strict monotonicity of $S(\cdot,t)$, since $H^{-1}$ is strictly decreasing and its argument is non-decreasing under assumptions \eqref{kappa_delta}. In the case of the exponential learning kernel \ref{c:Smone} we can write
			$$
				S(z,t)=H^{-1}\left(\frac{z^{1-\zeta}}{B(z,t)}\right)=H^{-1}\left(\frac{z^{1-\zeta}e^{-\kappa z}}{e^{-\kappa z}B(z,t)}\right)=H^{-1}\left(\frac{z^{1-\zeta}e^{-\kappa z}}{\tilde{B}_e(z,t)}\right).
			$$
			Again using the monotonicity of $H^{-1}$, we can conclude that $S(z,t)$ is monotonically decreasing in $z$, if $z \leq \frac{1-\zeta}{\kappa}$.
			\item If we start with $B(0,t)=0$ then $\tilde{B}_p(0,t)=\tilde{B}_e(0,t)=0$ and therefore $\tilde{B}_p(z,t), \tilde{B}_e(0,t) \leq 0$ for all $z > 0$. This implies that $B(z,t) \leq 0$. Then it is clear from \eqref{OP} that $S(z,t) = 0$ has to hold for all $z,t>0$.
		\end{itemize}
	\end{proof}
	
	\begin{rem}
		\begin{itemize}
		\item We want to point out that in the case of the polynomial learning kernel \eqref{e:kpoly} the restrictions on the constants \eqref{kappa_delta} are sufficient for the monotonicity of $S(\cdot, t)$, but not necessary. We will see in Section \ref{s:numerics} that the bounds are not sharp and that we observe monotonicity of solutions for parameter sets violating condition \eqref{kappa_delta}.
		\item Since also in the case of the exponential learning kernel \eqref{e:kexp} we just worked with knowing the monotonicity of the integrating factor $e^{\kappa z}B(z,t)$, we assume that the conditions for monotonicity of the control $S$, stated in in Corollary \ref{c:Smon}\ref{c:Smone} are very likely too strong.  
		\end{itemize}
	\end{rem}

	\begin{rem}\label{r:monB}
		Again the limiting logarithmic case (i.e. $\zeta=1$) is not covered by the above result. In the regime $0<B(z,t)<\frac{zU'(0)}{\alpha'(1)}$ the optimality condition of \eqref{OP} reads as
		$$
		\alpha'(S(z,t))(1-S(z,t))=\frac{1}{B(z,t)}.
		$$ 
		Since both $\alpha'(S)$ and $1-S$ are monotonically decreasing, the function $S(\cdot,t)$  inherits the monotonicity of $B(\cdot,t)$ completely. 
	\end{rem}
	
	\subsection{Analysis of the coupled system}\label{s:analysis_coupled_systems}
	
	Similar as in \cite{BLW} we use a fixed-point argument to prove existence and uniqueness of a solution for the fully coupled Boltzmann mean-field game system \eqref{bmfg} in a specific parameter regime, compatible with the results of Section \ref{s:bellman}. The utility term in \eqref{bmfgV} suggests that $V$ grows w.r.t. $z$ at a rate that is proportional to $U$. Hence, we expect that $\frac{V(z,t)}{1+U(z)}$ is in $C((0,T),L^{\infty}(\R_+))$, where again we have to exclude the case of the logarithmic utility. We introduce the weighted spaces
	\begin{align*}
	    L^p_w(\R^+) = \left\lbrace u:\, \R_+ \rightarrow \R_+ \colon \, \frac{u}{w} \in L^p(\R_{+})  \right\rbrace,
	\end{align*}
	and define $\mathscr{L}^{\infty}(\R^+)$ for the weight $w_1 = 1+U(\cdot)$ and $p=\infty$ and $\mathscr{L}^{1}(\R^+)$ for $w_2 = \frac{1}{1+U(\cdot)}$ and $p=1$.
	\begin{rem}
	Lemma \ref{l:weightbound} ensures that $f(\cdot,t) \in \mathscr{L}^{1}(\R_+)$, where $f$ is a solution to \eqref{BoE2} with initial data satisfying $\int_0^{\infty} (1+U(z)) f_I(z) \, \md z < \infty$ and non-negative utility fulfilling Assumption \ref{a:U}.
	\end{rem}
	For the existence proof, we further need a bound on the solution $V$ of the Hamilton-Jacobi-Bellman equation \eqref{BeE2} stated in the following Lemma.
	\begin{lemma}\label{l:boundLinfty}
		Let $V(z,t)$ be the solution to \eqref{bmfgV} with non-negative utility function $U \in C\left([0,\infty)\right)$, which further fulfils Assumption \ref{a:U}. Then $V(\cdot, t)$ is bounded in $\mathscr{L}^{\infty}(\R^+)$ for all $t \in [0,T)$.
	\end{lemma}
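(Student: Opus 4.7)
The plan is to derive a pointwise estimate of the form $V(z,t) \leq C(1+U(z))$ by applying the Duhamel representation of \eqref{bmfgV} and combining it with a Gronwall argument. Letting $M(t) := \|V(\cdot,t)/(1+U(\cdot))\|_{L^\infty(\R_+)}$, the goal is to show that $M$ remains uniformly bounded on $[0,T]$.

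First, I would exploit the qualitative information already at hand. By Lemma \ref{l:Vpos} one has $V \geq 0$, and by Theorem \ref{t:Vmon} the function $V(\cdot,t)$ is non-decreasing, so $0 \leq V(y,t)-V(z,t) \leq V(y,t)$ for $y \geq z$. Using in addition the monotonicity of $U$, the bound $\alpha(s) \leq \alpha(1)$, and $k(y,z) \leq \bar{k}$, the Hamiltonian on the right-hand side of \eqref{bmfgV} is controlled by
\begin{equation*}
\max_{s \in [0,1]}\!\left[U((1-s)z)+\alpha(s)B(z,t)\right] \;\leq\; U(z)+\alpha(1)\bar{k}\int_0^\infty V(y,t)f(y,t)\,\md y.
\end{equation*}
Next I would invoke Lemma \ref{l:weightbound} applied to $f$ with the non-decreasing weight $w = 1+U$, which yields a uniform-in-time bound $\int_0^\infty (1+U(y))f(y,t)\,\md y \leq C_f$ (assuming the corresponding moment of the initial datum $f_I$ is finite, which must be added as a standing hypothesis here). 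Combined with the tautological inequality $V(y,t) \leq M(t)(1+U(y))$, this gives $\int_0^\infty V(y,t)f(y,t)\,\md y \leq C_f M(t)$.

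Integrating \eqref{bmfgV} backward in time via the integrating factor $e^{-rt}$ and using the terminal condition $V(z,T)=0$ then produces
\begin{equation*}
V(z,t) \;=\; \int_t^T e^{-r(\tau-t)} \max_{s \in [0,1]}\!\left[U((1-s)z)+\alpha(s)B(z,\tau)\right]\md\tau \;\leq\; e^{|r|T}\!\int_t^T\!\!\left[U(z)+\alpha(1)\bar{k}C_f\, M(\tau)\right]\md\tau.
\end{equation*}
Dividing by $1+U(z)$, using $U(z) \leq 1+U(z)$, and taking the supremum over $z \in \R_+$ yields the backward integral inequality
\begin{equation*}
M(t) \;\leq\; C_1 + C_2 \int_t^T M(\tau)\,\md\tau,
\end{equation*}
with explicit constants $C_1 = e^{|r|T}T$ and $C_2 = e^{|r|T}\alpha(1)\bar{k}C_f$. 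A backward Gronwall argument then delivers $M(t) \leq C_1 e^{C_2(T-t)}$, which is the claimed bound.

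The main obstacle is to justify that $M(t)$ is \emph{a priori} finite, so that the bound $\int_0^\infty V f\,\md y \leq C_f M(t)$ is not vacuous. This can be handled by a continuation argument anchored at $M(T)=0$: Theorem \ref{t:existenceBeE} provides $V \in C((0,T];L^\infty(\R_+))$, so $M$ is certainly finite on a small terminal interval $[T-\delta,T]$, and the integral inequality above propagates this finiteness backward to $t=0$ in finitely many steps. An alternative route is to apply the same Duhamel/Gronwall step to the successive Picard iterates used in the proof of Theorem \ref{t:existenceBeE}, obtaining a uniform $\mathscr{L}^\infty$ bound that passes to the limit.
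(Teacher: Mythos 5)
Your argument is essentially the paper's own proof: both bound the benefit term by $\bar{k}\,\|V(\cdot,t)\|_{\mathscr{L}^{\infty}}\|f(\cdot,t)\|_{\mathscr{L}^{1}}$ using $V\geq 0$ and the boundedness of $k$ and $\alpha$, control $\|f(\cdot,t)\|_{\mathscr{L}^{1}}$ via Lemma \ref{l:weightbound} (the paper states the required moment condition on $f_I$ in the remark preceding the lemma), and close with a Gronwall-type argument on the weighted sup-norm, the paper via a differential inequality after time reversal and you via the Duhamel/integral form. Your extra care about the a priori finiteness of $M(t)$ and the explicit constants is a legitimate refinement but not a different method.
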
 
	\begin{proof}
		We will write $B(V,f)$ to emphasize the dependence of the benefit function $B$, defined in \eqref{B}, on $V$ and $f$. Then
		\begin{align*}
			B(V,f) &\leq \bar{k} \int_z^{\infty} V(y,t) f(y,t) \, \md y \leq \bar{k} \|V(\cdot,t)\|_{\mathscr{L}^{\infty}}\int_z^{\infty} (1+U(y)) f(y,t) \, \md y \\
			&\leq \bar{k} \|V(\cdot,t)\|_{\mathscr{L}^{\infty}} \|f(\cdot,t)\|_{\mathscr{L}^{1}}.
		\end{align*}
		The coordinate transform $t \mapsto -t$ in \eqref{BeE2} together with the non-negativity of $V$ as well as the monotonicity of $U$ and $S \in [0,1]$ yield
		$$
			\pa_t V(z,t) \leq U(z) + \alpha(S(z,t)) B(V,f)(z,t).
		$$
		After division of the inequality above by $1+U(z)$ and using the estimate on the benefit from search function we obtain
		\begin{align*}
			\pa_t \frac{V(z,t)}{1+U(z)} &\leq \frac{U(z)}{1+U(z)} + \frac{\bar{\alpha}\bar{k}}{1+U(z)}  \|f(\cdot,t)\|_{\mathscr{L}^1} \|V(\cdot,t)\|_{\mathscr{L}^{\infty}}  \\
			&\leq 1 + \bar{\alpha}\bar{k}\left(1+e^{\bar{\alpha}\bar{k}t} \|f_I\|_{\mathscr{L}^1}\right) \|V(\cdot,t)\|_{\mathscr{L}^{\infty}},
		\end{align*}
		where we used Lemma \ref{l:weightbound} in the last step.
	\end{proof}
	We are now able to prove the main theorem of this section:

	\begin{theorem}\label{t:existence}
		Let $f_I$ fulfill Assumption \ref{a:fI} as well as 
		$$
			\int_0^{\infty} U(z) f_I(z) \, \md z < \infty.
		$$
		Moreover let the search function $\alpha$ satisfy Assumption \ref{a:alpha} and the utility function $U$ be non-negative and fulfill Assumption \ref{a:U}. Then there exists a unique solution $(f,V) \in C\left((0,T);\mathscr{L}^{1}(\R^+) \times \mathscr{L}^{\infty}(\R^+)\right)$ of the fully coupled system \eqref{bmfg} for final time $T$ small enough.
	\end{theorem}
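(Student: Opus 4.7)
The plan is to set up a Banach fixed-point argument on the coupled system, leveraging the decoupled existence and stability results of Section \ref{s:boltzmann} and Section \ref{s:bellman}. More precisely, for a small $T>0$ and constants $R_1, R_2 > 0$ to be chosen, define the complete metric space
\begin{align*}
\mathcal{M} := \bigl\{ (f,V) \in C\bigl((0,T);\mathscr{L}^1(\R_+)\bigr) \times C\bigl((0,T);\mathscr{L}^{\infty}(\R_+)\bigr) : \, \|f\|_{\mathscr{L}^1} \leq R_1, \|V\|_{\mathscr{L}^\infty} \leq R_2 \bigr\},
\end{align*}
equipped with the natural product norm. I would introduce the solution map $\Phi:\mathcal{M} \to \mathcal{M}$ that sends $(\bar f, \bar V)$ to $(f,V)$ defined as follows: first compute the associated benefit function $\bar B(z,t)$ from $(\bar f, \bar V)$ via \eqref{B} and the pointwise maximizer $\bar S(z,t) = S(\bar B(z,t))$ given by Lemma \ref{l:max}; then let $f$ be the Boltzmann solution of \eqref{BoE2} with learning rate $\alpha(\bar S)$ (Theorem \ref{t:existenceBoE}) and $V$ the HJB solution of \eqref{BeE2} driven by $\bar f$ (Theorem \ref{t:existenceBeE}).

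The self-mapping property $\Phi(\mathcal{M}) \subset \mathcal{M}$ will follow from the weighted a priori bounds already established: Lemma \ref{l:weightbound} applied with $w = 1 + U$ gives $\|f(\cdot,t)\|_{\mathscr{L}^1} \leq e^{\bar\alpha \bar k t} \int_0^\infty (1+U(z)) f_I(z)\,\md z$, so choosing $R_1$ slightly larger than $\int(1+U) f_I$ and restricting $T$ suffices for the first component. For the second component, Lemma \ref{l:boundLinfty} together with Gr\"onwall's inequality applied to the differential inequality derived there yields a bound of $V$ in $\mathscr{L}^\infty$ that only grows like $e^{CT}$, and so $R_2$ can be chosen large and $T$ small to absorb this growth.

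The contraction step is the heart of the argument and I would handle it in three pieces. First, for $(\bar f_i, \bar V_i) \in \mathcal{M}$, $i=1,2$, the boundedness of $k$ and the Lipschitz estimate on $B \mapsto \alpha(S(B))B$ proved in Lemma \ref{l:lip} yield a weighted Lipschitz bound of the form $\|\alpha(\bar S_1)B_1 - \alpha(\bar S_2)B_2\|_{\mathscr{L}^\infty} \lesssim \|\bar V_1 - \bar V_2\|_{\mathscr{L}^\infty} + \|\bar f_1 - \bar f_2\|_{\mathscr{L}^1}$, where the weighted norms control the unbounded support because of the $1+U$ weight on $f$ and the matching $\mathscr{L}^\infty$ norm on $V$. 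Second, subtracting two copies of the Boltzmann equation with different learning rates and applying the gain/loss estimates from the proof of Theorem \ref{t:existenceBoE} together with $\|f\|_{\mathscr{L}^1}\leq R_1$ yields $\|f_1-f_2\|_{\mathscr{L}^1} \leq CT(\|\bar f_1-\bar f_2\|_{\mathscr{L}^1} + \|\bar V_1 - \bar V_2\|_{\mathscr{L}^\infty})$. Third, the stability estimate of Theorem \ref{t:existenceBeE} directly gives $\|V_1-V_2\|_{\mathscr{L}^\infty} \leq D e^{mT}\|\bar f_1-\bar f_2\|_{\mathscr{L}^1}\|\bar V_2\|_{\mathscr{L}^\infty}$. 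Combining these three estimates, choosing $T$ sufficiently small makes the overall Lipschitz constant of $\Phi$ strictly less than one, and Banach's theorem yields the unique fixed point in $\mathcal{M}$, which is the desired solution of \eqref{bmfg}.

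The main obstacle is ensuring that all estimates remain compatible in the weighted spaces: the Lipschitz bounds of Lemma \ref{l:lip} and Theorem \ref{t:existenceBoE} are stated in plain $L^1, L^\infty$, so one has to revisit them and verify that the $1+U(\cdot)$ weight does not destroy the contraction, using Assumption \ref{a:U} (monotonicity and concavity of $U$, so that $1+U(y) \geq 1+U(z)$ for $y \geq z$ inside the integral defining $B$). A secondary difficulty is the backward-in-time nature of the HJB equation: because of the terminal condition $V(\cdot,T)=0$, the Gr\"onwall factor $e^{mT}$ appears, forcing $T$ to be small, and the estimate for $\pa_t \tfrac{V}{1+U}$ in Lemma \ref{l:boundLinfty} must be integrated backwards from $T$. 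Once these weighted variants of the earlier Lipschitz and stability estimates are verified, the Banach fixed point argument closes without further difficulty.
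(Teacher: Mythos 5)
Your plan is correct and is essentially the paper's own argument: a small-time Picard/Banach fixed point in the weighted spaces $\mathscr{L}^1\times\mathscr{L}^{\infty}$, with self-mapping from Lemma \ref{l:weightbound} and Lemma \ref{l:boundLinfty}, and contraction from the Lipschitz properties of $B\mapsto S(B)$, $\alpha(S(B))$, $\alpha(S(B))B$ in Lemma \ref{l:lip} together with the decoupled results (Theorems \ref{t:existenceBoE} and \ref{t:existenceBeE}), including the same key verification that the weight $1+U$ absorbs the factor $zU'((1-S)z)$. The only cosmetic difference is that the paper contracts the map $g\mapsto f$ on $C\left((0,T);\mathscr{L}^1(\R^+)\right)$ with $V$ as an intermediate quantity, whereas you iterate on the pair $(f,V)$ in a product ball; both close with the same estimates.
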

	\begin{proof}
		We define the mapping 
		\begin{align*}
		\mathcal{F}:\, C\left([0,T);\mathscr{L}^{1}(\R^+)\right) &\to C\left([0,T);\mathscr{L}^{1}(\R^+)\right), \\ 
		g &\overset{\mathcal{F}}{\rightarrow} f
		\end{align*}
		given by first solving 
		\begin{align}\label{F1}
			\pa_t V(z,t)  = r V(z,t) - \max_{s \in \mathcal{S}}{\left[U((1-s)z) + \alpha(s(z,t)) B(V,g)(z,t)\right]},
		\end{align}
		for $V$ and determining the maximizer $S \in \mathcal{S}$. For this optimal time allocation $S$ we solve 
		\begin{align}\label{F2}
			\pa_t f(z,t) &= f(z,t) \int_0^z \alpha(S(y,t)) k(z,y)f(y,t) \, \md y-\alpha(S(z,t)) f(z,t) \int_z^{\infty} k(y,z)f(y,t) \, \md y 
		\end{align}
		for $f$. That $\mathcal{F}$ is a self-mapping is obvious due to Lemma \ref{l:weightbound}, Theorem \ref{t:existenceBoE} and Theorem \ref{t:existenceBeE}. What remains to show is that $\mathcal{F}$ is a contraction. Taking the difference on the right-hand-side of \eqref{F1} for data $g, \tilde{g} \in C\left([0,T);\mathscr{L}^{1}(\R^+)\right)$ we obtain at their maxima $S(B), S(\tilde{B})$
		\begin{align}\label{Vlip}
			&\pa_t(V-\tilde{V}) = r(V-\tilde{V})  + \bigg(U((1-S(\tilde{B}))z)-U((1-S(B))z)\bigg) + \bigg(\alpha(S(\tilde{B}))B(\tilde{V},\tilde{g})-\alpha(S(B))B(V,g)\bigg) \notag \\ 
			&\qquad  \leq r(V-\tilde{V})  + \bigg(zU'((1-S(B))z)(S(\tilde{B})-S(B))\bigg)+ \bigg(\alpha(S(\tilde{B}))B(\tilde{V},\tilde{g})-\alpha(S(B))B(V,g)\bigg) \notag \\
			& \qquad \leq r(V-\tilde{V})  + \bigg(L_1zU'((1-S(B))z) + L_2\bigg)\left(\tilde{B}-B\right). 
		\end{align}
		where we used the concavity of $U$ from Assumption \ref{a:U} as well as the Lipschitz continuity of $B \to S(B)$ and $B \to \alpha(S(B))B$ from Lemma \ref{l:lip}. 
		The difference $|\tilde{B}-B|$ can be controlled in the following way
		\begin{align*}
			|\tilde{B}-B| &\leq \left|\int_z^{\infty} k(y,z)\left(V(y,t)g(y,t)-\tilde{V}(y,t)\tilde{g}(y,t)\right)\, \md y\right| \\
			&\phantom{\leq}+ \left|V(z,t)\int_z^{\infty} k(y,z)g(y,t) \, \md y - \tilde{V}(z,t)\int_z^{\infty}k(y,z)\tilde{g}(y,t)\, \md y\right| \\
			&\leq 2\bar{k} \left(\|V\|_{\mathscr{L}^{\infty}}\|g-\tilde{g}\|_{\mathscr{L}^{1}} + \|V-\tilde{V}\|_{\mathscr{L}^{\infty}}\|\tilde{g}\|_{\mathscr{L}^{1}}\right).
		\end{align*}
		Division of \eqref{Vlip} by $1+U(z)$ and taking the $\sup_{z \in R_+}$ we obtain the following estimate
		\begin{align*}
			\pa_t\|V-\tilde{V}\|_{{\mathscr{L}^{\infty}}} &\leq \left(r+\|\tilde{g}\|_{\mathscr{L}^{1}}\right)\|V-\tilde{V}\|_{\mathscr{L}^{\infty}} \\
			&\phantom{\leq}+ 2\bar{k}\|V\|_{\mathscr{L}^{\infty}}\sup_{z\in \R_+} \frac{L_1zU'((1-S(B))z) + L_2}{1+U(z)}\|g-\tilde{g}\|_{\mathscr{L}^{1}}.
		\end{align*}
		We notice that \eqref{a:U} ensures that $zU'((1-S(B))z) < \infty$ holds. 
		Hence, if 
		$
			\int_0^T \|g-\tilde{g}\|_{\mathscr{L}^{1}} \, dt 
		$
		is sufficiently small, the same holds true for $\|V-\tilde{V}\|_{\mathscr{L}^{\infty}}$. For \eqref{F2} we estimate
		\begin{align*}
			&\frac{\md}{\md t} \|f-\tilde{f}\|_{\mathscr{L}^{1}} \\
			&\quad \leq \int_0^{\infty} \left|\int_0^z k(z,y)(1+U(z))\left(\alpha(S(B)(z,t))f(z,t)f(y,t)-\alpha(S(\tilde{B})(z,t))\tilde{f}(z,t)\tilde{f}(y,t)\right)\, dy \right| \, dz \\
			&\quad\leq \bar{k} \int_0^{\infty}\int_0^z(1+U(z))\left|\alpha(S(B)(z,t))f(z,t)f(y,t)-\alpha(S(\tilde{B})(z,t))\tilde{f}(z,t)\tilde{f}(y,t)\right| \md y \, \md z  \\
			&\quad\leq \int_0^{\infty}\int_0^z (1+U(z))\alpha(S(B)(z,t))\left| f(z,t)f(y,t)- \tilde{f}(z,t)\tilde{f}(y,t)\right|\, \md y \, \md z \\
			&\quad\phantom{\leq} + \int_0^{\infty}\int_0^z (1+U(z))\left|\alpha(S(B)(z,t))-\alpha(S(\tilde{B})(z,t))\right| \tilde{f}(z,t)\tilde{f}(y,t)\, \md y \, \md z \\
			&\quad\leq  \bar{k} \bar{\alpha} \left(\|f\|_{\mathscr{L}^1}\|f-\tilde{f}\| _{\mathscr{L}^{1}}+ \|\tilde{f}\|_{\mathscr{L}^{1}}\|f-\tilde{f}\|_{\mathscr{L}^1}\right) \\
			&\quad \phantom{\leq}+ 2\bar{k}^2 L_3 \|\tilde{f}\|_{\mathscr{L}^1}\|\tilde{f}\|_{\mathscr{L}^1} \left(\|V\|_{\mathscr{L}^{\infty}}\|g-\tilde{g}\|_{\mathscr{L}^1} + \|V-\tilde{V}\|_{\mathscr{L}^{\infty}}\|\tilde{g}\|_{\mathscr{L}^1}\right),
		\end{align*}
		where we used the Lipschitz continuity of $B \to \alpha(S(B))$ from Lemma \ref{l:lip}. Since control over $\|V-\tilde{V}\|_{\mathscr{L}^{\infty}}$ by $\|g-\tilde{g}\|_{\mathscr{L}^1}$ is ensured due to the previous estimate, we also see that $\|f-\tilde{f}\|_{\mathscr{L}^1}$ can be controlled by $\|g-\tilde{g}\|_{\mathscr{L}^1}$. Choosing the time interval $[0,T)$ small enough we can enforce
		$$
			\|f-\tilde{f}\|_{\mathscr{L}^1} \leq L \|g-\tilde{g}\|_{\mathscr{L}^1}, \quad L<1,
		$$
		which proves that $\mathcal{F}$ is a contraction concluding the proof.
	\end{proof}
	
		\section{Balanced growth path solutions}
	\label{s:bgp}
	
	In this section we discuss possible existence of balance growth path solutions, which relate to exponential growth of the overall economic production function \eqref{e:overallproductivity}. We recall that we wish to find a growth rate $\gamma >0$ to investigate the problem in the rescaled variables
	\begin{align}
		f(z,t)=e^{-\gamma t} \phi(z e^{-\gamma t}), \quad V(z,t) = e^{\gamma t} v(ze^{-\gamma t}), \quad s(z,t) = \sigma(ze^{-\gamma t}), \quad x=ze^{-\gamma t},
	\end{align} 
	where $(\phi,v, \sigma)$ are solutions to \eqref{bgp}. 
	
	Burger et al \cite{BLW} showed that BGPs exist for $k \equiv 1$ and linear utility \eqref{e:ulin} if the initial 
	CDF $F(Z,0)=\int_0^Zf_I(y) \, \md y$ \eqref{F} has a \emph{Pareto tail}.  A crucial criterion for the existence of BGP solutions was the preservation of the Pareto tail property with time, from which existence of a growth parameter $\gamma$ can be deduced. In this section, we investigate the extension of these results to the case of non-constant learning kernels. \\
	We assume that the initial CDF has a Pareto tail, that is
	\begin{align}\label{defPareto}
		\exists \, \theta, \beta > 0\,\, \text{ s.t. } \,\, \lim_{Z\to \infty} (1-F(Z,0))Z^{\frac{1}{\theta}}=\beta.
	\end{align} 
 The first Lemma shows that under these assumptions on the initial data, the Pareto tail property will be preserved in time, has the same decay $\theta$, but a different tail index $\beta$.
	\begin{lemma}\label{l:Pareto}
		Consider the Boltzmann mean-field game \eqref{bmfg} under assumption \eqref{defPareto} for the initial data and let further the learning kernel be bounded from above and below by
		$$
			\underline{k} < k(z,y) \leq \bar{k}, \quad 0 \leq \underline{k}<\bar{k},
		$$
		for all $z>y$. Then for all $t\in [0,T]$ the cumulative distribution function $F(\cdot,t)$ solving \eqref{BoEF} has a Pareto tail with the same decay rate $\theta$ as $F(\cdot,0)$.
	\end{lemma}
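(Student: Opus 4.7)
The plan is to derive a closed evolution equation for the survival function $G(Z,t) := 1 - F(Z,t)$ and then sandwich it by linear ODE inequalities in $t$, from which the $Z^{-1/\theta}$ tail is preserved via Gronwall and a passage to the limit $Z \to \infty$.

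First, I would use mass conservation, writing $\partial_t F(Z,t) = -\int_Z^\infty \partial_t f(z,t)\,\md z$, and then split the gain and loss double integrals according to whether the inner variable is below or above $Z$. The contributions from the region $\{Z \leq y \leq z\}$ in the gain term cancel exactly with those of the loss term (after Fubini), which is the same cancellation used in the proof of Theorem \ref{t:concentration}. What remains is the clean representation
\begin{equation*}
\partial_t G(Z,t) \;=\; \int_Z^{\infty} f(z,t) \int_0^Z \alpha(y,t)\, k(z,y)\, f(y,t)\,\md y\,\md z.
\end{equation*}

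Second, using $\underline{\alpha} \leq \alpha \leq \bar{\alpha}$ (which holds since $s \in [0,1]$ and Assumption \ref{a:alpha}) together with the two-sided assumption $\underline{k} \leq k \leq \bar{k}$, I would sandwich the right-hand side by $F\,G$:
\begin{equation*}
\underline{\alpha}\,\underline{k}\, F(Z,t)\, G(Z,t) \;\leq\; \partial_t G(Z,t) \;\leq\; \bar{\alpha}\,\bar{k}\, F(Z,t)\, G(Z,t).
\end{equation*}
Since $F \leq 1$, Gronwall in $t$ at fixed $Z$ gives the immediate upper bound $G(Z,t) \leq G(Z,0)\,e^{\bar{\alpha}\bar{k}\,t}$. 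Combined with the assumed Pareto decay of the initial CDF, this already yields $G(Z,t) \to 0$ as $Z \to \infty$ \emph{uniformly} in $t \in [0,T]$; hence for any $\varepsilon > 0$ there exists $Z_\varepsilon$ with $F(Z,t) \geq 1 - \varepsilon$ for $Z \geq Z_\varepsilon$, $t \in [0,T]$. On this regime Gronwall produces the matching lower bound $G(Z,t) \geq G(Z,0)\,e^{\underline{\alpha}\underline{k}(1-\varepsilon)\,t}$.

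Third, I would multiply both bounds by $Z^{1/\theta}$ and let $Z \to \infty$, using \eqref{defPareto}, to conclude
\begin{equation*}
\beta\, e^{\underline{\alpha}\underline{k}\,t} \;\leq\; \liminf_{Z\to\infty} G(Z,t)\,Z^{1/\theta} \;\leq\; \limsup_{Z\to\infty} G(Z,t)\,Z^{1/\theta} \;\leq\; \beta\, e^{\bar{\alpha}\bar{k}\,t},
\end{equation*}
so that $(1-F(\cdot,t))$ decays precisely at polynomial rate $Z^{-1/\theta}$ for every $t \in [0,T]$; the exponent $\theta$ is preserved while the prefactor may drift within the bracketed range.

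The main obstacle is that the lemma as stated, in analogy with \eqref{defPareto}, suggests a genuine limit rather than only matched $\liminf$ and $\limsup$. To upgrade the sandwich to an equality one needs a finer asymptotic identification of $\int_0^Z \alpha(y,t)\,k(z,y)\,f(y,t)\,\md y$ as $z \geq Z \to \infty$; this is kernel-dependent (for the polynomial kernel \eqref{e:kpoly} the inner integral tends to $\delta\int_0^\infty \alpha f\,\md y$, yielding a clean limiting linear ODE, while for the exponential kernel \eqref{e:kexp} the inner integral decays exponentially in $z$ and the tail essentially freezes). A unified argument treating both kernels at once should derive a limiting identity of the form $\frac{d}{dt}\beta(t) = c(t)\,\beta(t)$ with $c(t) \in [\underline{\alpha}\underline{k}, \bar{\alpha}\bar{k}]$, which then guarantees a well-defined time-dependent tail coefficient $\beta(t)$ and completes the proof.
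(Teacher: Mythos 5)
Your route is essentially the paper's: the proof there also starts from the reformulation $\frac{\md}{\md t}\,(1-F(Z,t)) = \int_Z^{\infty}\int_0^Z \alpha(s(y,t))\,k(z,y)\,f(y,t)\,f(z,t)\,\md y\,\md z$ obtained by Fubini in the loss term, bounds $k$ between $\underline{k}$ and $\bar{k}$, integrates the resulting differential inequality in time, and multiplies by $Z^{1/\theta}$. Two small points on your version of the sandwich: the lemma allows $\underline{k}=0$ and Assumption \ref{a:alpha} imposes $\alpha(0)=0$, so a positive constant $\underline{\alpha}$ does not exist in general; this is harmless, because the only lower bound you need is $\pa_t G\geq 0$, i.e. $G(Z,t)\geq G(Z,0)$, which already gives $\liminf_{Z\to\infty}Z^{1/\theta}G(Z,t)\geq\beta>0$. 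The paper is also slightly sharper in that it keeps the factor $\exp\bigl(\int_0^t\int_0^Z \alpha(s(y,s))f(y,s)\,\md y\,\md s\bigr)$, bounded by $e^{t\alpha(1)}$, instead of replacing $\alpha$ by a constant, but that refinement is cosmetic for the inequality part.

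The genuine gap is the one you flag yourself and then leave open: the conclusion of the lemma is that $F(\cdot,t)$ has a Pareto tail in the sense of \eqref{defPareto}, i.e. that the limit $\beta(t):=\lim_{Z\to\infty}Z^{1/\theta}\bigl(1-F(Z,t)\bigr)$ exists, whereas your sandwich only traps $\liminf$ and $\limsup$ inside the interval $[\beta,\beta e^{\bar{\alpha}\bar{k}t}]$, which is strictly weaker than the claimed statement. The paper closes this step not by the kernel-by-kernel asymptotic identification you sketch (which would indeed have to distinguish \eqref{e:kpoly} from \eqref{e:kexp}), but by a soft argument: after multiplication by $Z^{1/\theta}$ it notes that $\int_0^Z\alpha(S(y,s))f(y,s)\,\md y$ is increasing in $Z$ and bounded by $\alpha(1)$, argues that $Z^{1/\theta}\bigl(1-F(Z,t)\bigr)$ is monotonically decreasing for $Z$ large, and concludes that the limit $\beta(t)$ exists, after which the time-integrated bounds give $\underline{k}\,e^{t\alpha(1)}\beta<\beta(t)\leq\bar{k}\,e^{t\alpha(1)}\beta$. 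So your first two steps reproduce the paper's mechanism, but without some argument for the existence of the limit (monotonicity in $Z$ for large $Z$, or the limiting ODE for $\beta(t)$ you allude to, carried out rather than announced) the proof does not reach the statement as formulated; the final paragraph of your proposal is a plan, not a proof, and it asks for more than the paper itself needs.
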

	\begin{proof}
		We use the reformulation from \eqref{BoEF} 
		\begin{align*}
			&\frac{\md }{\md t} F(Z,t) = - \int_Z^{\infty} \int_0^Z \alpha(s(y,t)) k(z,y)f(y,t)f(z,t)\,\md y \,\md z 
		\end{align*}
		obtained by interchanging the order of integration in the loss-term. It further follows
		\begin{align*}
			&\frac{\md }{\md t} (1-F(Z,t)) =  \int_Z^{\infty} \int_0^Z \alpha(s(y,t)) k(z,y)f(y,t)f(z,t)\,\md y \,\md z 
		\end{align*}
		and by using that $\underline{k} < k \leq \bar{k}$, we obtain that
		\begin{align*}
			(1-F(Z,t))\leq \bar{k} (1-F(Z,0))\exp\left(\int_0^t \int_0^Z \alpha(s(y,s)) f(y,s) \, \md y \, \md s \right)
		\end{align*}
		as well as 
		$
			(1-F(Z,t)) > \underline{k} (1-F(Z,0))\exp\left(\int_0^t \int_0^Z \alpha(s(y,s)) f(y,s) \, \md y \, \md s \right).
		$
	 Multiplication by $Z^{\frac{1}{\theta}}$ yields
		\begin{align*}
			Z^{\frac{1}{\theta}}(1-F(Z,t))\leq \bar{k} Z^{\frac{1}{\theta}}(1-F(Z,0))\exp\left(\int_0^t \int_0^Z \alpha(s(y,s)) f(y,s) \, \md y \, \md s \right)
		\end{align*}
		and, hence, the respective expression for the lower bound. The integral 
		$
		\int_0^Z \alpha(S(y,s)) f(y,s) \, \md y
		$
		is strictly increasing in $Z$ and bounded by $\alpha(1)$ for a fixed time $s$. Furthermore the integrability of $f(\cdot,t)$ ensures that $Z^{\frac{1}{\theta}}(1-F(Z,t))$ is monotonically decreasing for large $Z$, and therefore the limit \newline $\lim_{Z \to \infty} Z^{\frac{1}{\theta}}(1-F(Z,t))=:\beta(t)$ exists. This implies
		$$
		\underline{k} e^{t \alpha(1)}\beta < \beta(t)\leq \bar{k}e^{t \alpha(1)}\beta.
		$$
		\end{proof}

	\begin{rem}
		The bounds on $k$ are given by $\underline{k}= \tilde{\delta}<\delta$ and $\bar{k}=1$ for the polynomial kernel \eqref{e:kpoly} and by $\underline{k}=0$ and $\bar{k}=\mu$ for the exponential learning interaction \eqref{e:kexp}. For the polynomial kernel \eqref{e:kpoly} we obtain the concrete lower bound for $\beta(t)$, 
		$$	
			\delta e^{t \alpha(t)} \leq \beta(t) \leq e^{t \alpha(1)}\beta;
		$$
		for the exponential kernel \eqref{e:kexp} we only have
		$$
			0< \beta(t)\leq \mu e^{t \alpha(1)}\beta.
		$$
	\end{rem}
	
	The previous results allow us to deduce the existence of a growth parameter $\gamma$ for bounded learning kernels, which defines the $BGP$ solutions. 
	
	\begin{theorem}\label{t:growthpar}
		Let the initial CDF satisfy \eqref{defPareto} and let the learning kernel be bounded from above and below by
		$$
		\underline{k} < k(z,y) \leq \bar{k}, \quad 0 \leq \underline{k}<\bar{k},
		$$
		for all $z>y$. Then there exists a growth parameter $\gamma >0$ which can be estimated by the following conditions
		$$
		\frac{\underline{k}}{\theta} \int_0^{\infty} \alpha(\sigma(y))\phi(y) \, \md y < \gamma \leq \frac{\bar{k}}{\theta}  \int_0^{\infty} \alpha(\sigma(y))\phi(y) \, \md y.
		$$
	\end{theorem}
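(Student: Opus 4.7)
The plan is to pin down $\gamma$ by evaluating the rate $\dot\beta(t)/\beta(t)$ in two independent ways, where $\beta(t):=\lim_{Z\to\infty} Z^{1/\theta}(1-F(Z,t))$ is the Pareto tail coefficient whose existence and monotone evolution are controlled by Lemma~\ref{l:Pareto}. The first evaluation uses the BGP rescaling $f(z,t)=e^{-\gamma t}\phi(ze^{-\gamma t})$, which gives $F(Z,t)=\Phi(Ze^{-\gamma t})$ with $\Phi(x)=\int_0^x\phi(y)\,\md y$. Assuming $\phi$ inherits a Pareto tail with the same exponent $\theta$ and some coefficient $\tilde\beta$ (a consistency requirement imposed by Lemma~\ref{l:Pareto} at $t=0$), the change of variable $u=Ze^{-\gamma t}$ yields $\beta(t)=\tilde\beta\, e^{\gamma t/\theta}$, hence $\dot\beta(t)/\beta(t)=\gamma/\theta$.

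For the second evaluation I would return to the identity
$$
\frac{\md}{\md t}\bigl(1-F(Z,t)\bigr) = \int_Z^{\infty}\!\!\int_0^Z \alpha(s(y,t))\,k(z,y)\,f(y,t)f(z,t)\,\md y\,\md z
$$
already derived inside the proof of Lemma~\ref{l:Pareto}, and sandwich its right-hand side between $\underline{k}(1-F(Z,t))\int_0^Z\alpha(s(y,t))f(y,t)\,\md y$ and its $\bar{k}$-counterpart via the two-sided bound on $k$. Multiplying by $Z^{1/\theta}$ and letting $Z\to\infty$, the inner integral converges, through the BGP change of variable $x=ye^{-\gamma t}$ and monotone convergence, to $I:=\int_0^{\infty}\alpha(\sigma(y))\phi(y)\,\md y$, while $Z^{1/\theta}(1-F(Z,t))\to\beta(t)$. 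Dividing by $\beta(t)>0$ produces
$$
\underline{k}\,I \;<\; \frac{\dot\beta(t)}{\beta(t)} \;\le\; \bar{k}\,I,
$$
the strict inequality on the left being inherited from $k>\underline{k}$ on a set of positive measure. Equating with $\dot\beta/\beta=\gamma/\theta$ and rearranging yields the two-sided bound on $\gamma$ claimed in the theorem.

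The delicate step is the interchange of $Z\to\infty$ with the time derivative in the sandwich. I would circumvent this by multiplying the Boltzmann identity by $Z^{1/\theta}$ and first integrating in time, thereby reducing the claim to showing that $\beta(t)-\beta(0)$ equals the $Z\to\infty$ limit of $\int_0^t Z^{1/\theta}\frac{\md}{\md s}(1-F(Z,s))\,\md s$; dominated convergence then applies with the uniform-in-$Z$ majorant $\bar{k}\,e^{s\alpha(1)}\beta$ already furnished inside the proof of Lemma~\ref{l:Pareto}, combined with $\alpha(\sigma)\le\alpha(1)$ and $\phi\in L^1(\R_+)$ from the BGP version of mass conservation. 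A further subtlety worth flagging is that the whole argument is conditional on the a priori existence of a BGP profile $(\phi,v,\sigma)$; as the authors note in the introduction, this existence question remains open for general learning kernels, so the theorem should be read as identifying $\gamma$ whenever such a profile exists.
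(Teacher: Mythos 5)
Your proposal is correct and is built on the same pillars as the paper's own argument: preservation of the Pareto tail (Lemma \ref{l:Pareto}), the BGP rescaling $F(Z,t)=\Phi(Ze^{-\gamma t})$ which forces the tail coefficient to satisfy $\beta(t)=\beta e^{\gamma t/\theta}$, and the two-sided kernel bound to sandwich the collision integral by $\underline{k}$ and $\bar{k}$ times $\beta(t)\int_0^\infty\alpha(\sigma(y))\phi(y)\,\md y$. Where you genuinely deviate is in how the delicate limit is justified. The paper stays pointwise in the rescaled variable: it writes $\partial_t F=-\gamma X\Phi'(X)$, multiplies by $X^{1/\theta}$, identifies the left-hand limit $\tfrac{\gamma}{\theta}\tilde\beta(t)$ by L'H\^opital, and then has to prove separately that $X^{1/\theta}\int_{Xe^{\gamma t}}^{\infty}\int_0^{Xe^{\gamma t}}\alpha\,k\,f\,f$ actually possesses a limit, which costs the explicit derivative computation showing this quantity is eventually non-increasing in $X$. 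Your time-integrated route sidesteps exactly that step: after integrating the sandwiched identity in $s$ over $[0,t]$, the middle term telescopes to $Z^{1/\theta}(1-F(Z,t))-Z^{1/\theta}(1-F(Z,0))$, whose $Z\to\infty$ limit $\beta(t)-\beta(0)$ exists by Lemma \ref{l:Pareto} alone, while only the two bounding integrands need pointwise limits (monotone convergence for the inner integral) plus the uniform exponential majorant from Lemma \ref{l:Pareto}; combined with $\beta(t)-\beta(0)=\tfrac{\gamma}{\theta}\int_0^t\beta(s)\,\md s$ this gives the sandwich without any eventual-monotonicity argument, which is a slightly more elementary treatment of the hardest step. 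Two caveats, both shared with the paper rather than defects of your route: strictness of the lower bound can degrade to non-strict when passing to the limit, so "inherited from $k>\underline{k}$" needs the same gloss the paper tacitly uses; and your sandwich, exactly like the paper's own computation, bounds $\gamma/\theta$ between $\underline{k}\int\alpha(\sigma)\phi\,\md y$ and $\bar{k}\int\alpha(\sigma)\phi\,\md y$, i.e. $\gamma\in(\theta\underline{k}I,\theta\bar{k}I]$, whereas the theorem as printed places $\theta$ in the denominator — so "rearranging yields the claimed bound" should be stated with the correct placement of $\theta$. Your closing remark that the whole argument is conditional on the a priori existence of a BGP profile matches the paper's standpoint.
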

	
	\begin{proof}
		Lemma \ref{l:Pareto} ensures that $F(Z,t)$ has a Pareto tail for all times $t>0$. We introduce the cumulative distribution function in BGP variables by defining $F(Z,t)=\Phi(Ze^{-\gamma t})$, $X=Ze^{-\gamma t}$ and calculate for every fixed time $t \geq 0$
		$$
			\lim_{X \to \infty}X^{\frac{1}{\theta}}\left(1-\Phi(X)\right) = e^{-\frac{\gamma t}{\theta}} \beta(t) =:\tilde{\beta}(t),
		$$
		as well as
		\begin{align*}
			-\gamma X \Phi'(X) =  - \int_{Xe^{\gamma t}}^{\infty} \int_0^{Xe^{\gamma t}} \alpha(S(y,t)) k(z,y)f(y,t)f(z,t)\,\md y \,\md z . 
		\end{align*}
		If we now multiply this inequality by $X^{\frac{1}{\theta}}$ and pass to the limit $X \to \infty$, we obtain due to L'Hôpital's rule
		\begin{align*}
			\frac{\gamma}{\theta} \tilde{\beta}(t) = - \lim_{X \to \infty} \left( X^{\frac{1}{\theta}} \int_{Xe^{\gamma t}}^{\infty} \int_0^{Xe^{\gamma t}} \alpha(S(y,t)) k(z,y)f(y,t)f(z,t)\,\md y \,\md z \right).
		\end{align*}
		Since $k(z,y) \leq \bar{k} $ for $z>y$, we see that
		\begin{align*}
			&X^{\frac{1}{\theta}} \int_{Xe^{\gamma t}}^{\infty} \int_0^{Xe^{\gamma t}} \alpha(S(y,t)) k(z,y)f(y,t)f(z,t)\,\md y \,\md z 
			\, \leq \,  \bar{k} X^{\frac{1}{\theta}} (1-\phi(X)) \int_0^{Xe^{\gamma t}} \alpha(S(y,t)) f(y,t)\,\md y 
		\end{align*}
		and is therefore bounded from above by $\bar{k} \beta(t) \int_0^\infty \alpha(S(y,t)) f(y,t)\,\md y,$ as $ X \to \infty $.
		On the other hand, since $k(z,y) > \underline{k}$, we have the lower bound
		\begin{align*}
			&X^{\frac{1}{\theta}} \int_{Xe^{\gamma t}}^{\infty} \int_0^{Xe^{\gamma t}} \alpha(S(y,t)) k(z,y)f(y,t)f(z,t)\,\md y \,\md z  
			>  \underline{k} X^{\frac{1}{\theta}} (1-\Phi(X)) \int_0^{Xe^{\gamma t}} \alpha(S(y,t)) f(y,t)\,\md y,
			\end{align*}
			which gives the lower bound 
			$\underline{k} \beta(t) \int_0^{Xe^{\gamma t}} \alpha(S(y,t)) f(y,t)\,\md y$, as $X \to \infty .$
		Furthermore, we calculate 
		\begin{align*}
			&\pa_X\left[ X^{\frac{1}{\theta}} \int_{Xe^{\gamma t}}^{\infty} \int_0^{Xe^{\gamma t}} \alpha(S(y,t)) k(z,y)f(y,t)f(z,t)\,\md y \,\md z \right] \\
			&\quad =e^{\gamma t \left(1-\frac{1}{\theta}\right)} \pa_Z\left[ Z^{\frac{1}{\theta}} \int_{Z}^{\infty} \int_0^{Z} \alpha(S(y,t)) k(z,y)f(y,t)f(z,t)\,\md y \,\md z \right] \\
			&\quad = e^{\gamma t \left(1-\frac{1}{\theta}\right)} \left[\frac{1}{\theta} Z^{\frac{1}{\theta}-1} \int_{Z}^{\infty} \int_0^{Z} \alpha(S(y,t)) k(z,y)f(y,t)f(z,t)\,\md y \,\md z\right. \\
			&\quad\quad  \phantom{=e^{\gamma t \left(1-\frac{1}{\theta}\right)} } + Z^{\frac{1}{\theta}} \alpha(S(Z,t)) f(Z,t) \int_{Z}^{\infty}k(z,Z)f(z,t) \, \md z - \left. Z^{\frac{1}{\theta}} f(Z,t) \int_0^Z \alpha(S(y,t)) k(Z,y) f(y,t) \md y \right] \\
				&\quad \leq e^{\gamma t \left(1-\frac{1}{\theta}\right)} \left[ \frac{\overline{k}}{\theta Z} Z^{\frac{1}{\theta}} (1-F(Z,t)) \int_0^{Z} \alpha(S(y,t)) f(y,t) \, \md y \right.  \\
			&\quad \phantom{ \leq e^{\gamma t \left(1-\frac{1}{\theta}\right)} } + \bar{k} e^{\gamma t \left(1-\frac{1}{\theta}\right)}  Z^{\frac{1}{\theta}}(1-F(Z,t)) \alpha(S(Z,t)) f(Z,t) -\left.  \underline{k} e^{\gamma t \left(1-\frac{1}{\theta}\right)}  Z^{\frac{1}{\theta}} f(Z,t) \int_0^Z \alpha(S(y,t))f(y,t) \, \md y\right].
		\end{align*}
Both positive terms include $Z^{\frac{1}{\theta}}(1-F(Z,t))$, which (due to Lemma \ref{l:Pareto}) can be bounded by $C \beta(t)$, with a constant $C>0$ for all $Z>Z'$. This implies, together with the boundedness of $\int_0^{Z} \alpha(S(y,t)) f(y,t) \, \md y $, $\alpha(S(Z,t))$ and $\lim_{Z \to \infty} f(Z,t) = 0$, that the above expression is negative for large values of $Z \in \R_+$. We further observe that $X^{\frac{1}{\theta}} \int_{Xe^{\gamma t}}^{\infty} \int_0^{Xe^{\gamma t}} \alpha(S(y,t)) k(z,y)f(y,t)f(z,t)\,\md y \,\md z$ is non-increasing for large $X$, from which together with the bounds we are able to conclude the existence of a limit
		$$
		\lim_{X \to \infty} \int_{Xe^{\gamma t}}^{\infty} \int_0^{Xe^{\gamma t}} \alpha(S(y,t)) k(z,y)f(y,t)f(z,t)\,\md y \,\md z.
		$$
		The bounds on the growth parameter $\gamma$ follow trivially by the preceding estimates.
	\end{proof}
	
		In the case of the exponential learning kernel \eqref{e:kexp} we have $\underline{k}=0$, which implies that the growth parameter $\gamma$ can be arbitrarily close to 0. Thus, the exponential growth can be very small and the BGP solutions very close to steady states. \\

	    The existence of BGP solutions is still an open problem. We expect that the proof follows the arguments of Burger et al. \cite{BLW2} at least for linear utility functions. We leave this interesting problem in the future research.

	\section{Local mean field game model for knowledge growth}\label{s:lmfg}
	
	We finish considering BMFG models \eqref{bmfg} with non-constant learning rate $k(\cdot,\cdot)$, in particular localised kernels. We will see that this allows us to formally derive a \emph{local mean-field game model} in a suitable scaling limit.\\
	
	We assume that
	\begin{align}\label{a:klocal}
		k(z,y) = \ve^{-1}k_*\left(\frac{z-y}{\ve}\right),
	\end{align}
	where $k_*$ should be symmetric and $\ve \ll 1$ is the scaling parameter. Moreover, we assume that 
	$$
	\int_{\frac{z}{\ve}}^{\infty} k_*(x) \, \md z \in \mathcal{O}(\ve^2), \quad \text{and} \quad \int_{\frac{z}{\ve}}^{\infty} x k_*(x) \, \md z \in \mathcal{O}(\ve)
	$$
	for $\ve \to 0$. Hence individuals only learn from others with almost same knowledge level as themselves. \\
	Note that the exponential interaction rate \eqref{e:kexp} fulfils the aforesaid assumptions.  Define $\kappa = \ve^{-1}\tilde{\kappa}$ and $\mu = \ve^{-1}\tilde{\mu}$ then 
		$$
		k(z,y) = \frac{\tilde{\mu}}{\ve} e^{-\tilde{\kappa}\frac{|z-y|}{\ve}},
		$$
		where $\tilde{\kappa}, \tilde{\mu} = \mathcal{O}(1)$.\\

	We first investigate the Boltzmann equation \eqref{bmfgf} with the kernel of the form \eqref{a:klocal} in the limit $\ve \to 0$. We calculate
	\begin{align*}
		\pa_t f(z,t) &= f(z,t) \left[\ve^{-1}\int_0^z \alpha\left(S(y,t)\right) k_*\left(\frac{z-y}{\ve}\right) f(y,t) \, \md y - \alpha\left(S(z,t)\right) \ve^{-1} \int_z^{\infty} k_*\left(\frac{z-y}{\ve}\right) f(y,t) \, \md y \right] \\
		&= f(z,t) \left[\int_0^{\frac{z}{\ve}} \alpha(S(z-\ve x,t))k_*(x) f(z-\ve x,t) \, \md x - \alpha(S(z,t)) \int_0^{\infty} k_*(x) f(z+\ve x) \, \md x  \right]
	\end{align*}
	where we first performed the coordinate change $x=\frac{z-y}{\ve}$ in the gain term and $x=\frac{y-z}{\ve}$ in the loss term. By expanding $f(\cdot,t)$ and $\alpha(S(\cdot,t))$ around $z$ we further obtain
	\begin{align*}
		\pa_t f(z,t) &=   f(z,t) \left[f(z,t)\alpha(S(z,t))\int_0^{\frac{z}{\ve}} k_*(x) \, \md x - \ve \pa_zf(z,t)\alpha(S(z,t))  \int_0^{\frac{z}{\ve}}x k_*(x) \, \md x \right. \\
		&\phantom{=}\left.- f(z,t)\pa_z (\alpha(S(z,t))) \int_0^{\frac{z}{\ve}}x k_*(x) \, \md x - f(z,t) \alpha(S(z,t)) \int_0^{\infty} k_*(x) \, \md x  \right. \\
		&\phantom{=}\left. - \ve \alpha(S(z,t))\pa_zf(z,t)\int_0^{\infty} x k_*(x) \, \md x \right]  + \mathcal{O} (\ve^2) \\
		&=-\ve \lambda f(z,t) \bigg[2\pa_zf(z,t)\alpha(S(z,t)) + f(z,t) \pa_z(\alpha(S(z,t)))\bigg] + \mathcal{O} (\ve^2)  \\
		&= - \ve \lambda \pa_z\bigg[f^2(z,t)\alpha(S(z,t))\bigg]+ \mathcal{O} (\ve^2) \,,
	\end{align*}
	where we defined 
	\begin{align}\label{lambda}
		\lambda := \int_{0}^\infty x k_*(x) \, \md x.
	\end{align}
	Using \eqref{a:klocal} in the Hamilton-Jacobi-Bellman equation \eqref{bmfgV} yields
	\begin{align*}
		\pa_t V(z,t) - r V(z,t) = - \max_{s \in \mathcal{S}}{\left[U((1-s)z)+\alpha(s) \ve^{-1}\int_z^{\infty}\left(V(y,t)-V(z,t)\right)f(y,t)k_*\left(\frac{z-y}{\ve}\right)\, \md y \right]}.
	\end{align*}
	Next we expand the integral on the right-hand-side (after the coordinate chage $x=\frac{y-z}{\ve}$): 
	\begin{align*}
		\ve^{-1}\int_z^{\infty}&\left(V(y,t)-V(z,t)\right)f(y,t)k_*\left(\frac{z-y}{\ve}\right)\, \md y \\
		&= \int_0^{\infty}\left(V(z+\ve x,t)-V(z,t)\right)f(z+\ve x,t)k_*\left(x\right)\, \md x \\
		&= \ve \lambda f(z,t) \pa_zV(z,t) + \mathcal{O}(\ve^2),
	\end{align*}
	where the constant $\lambda$ is defined as in \eqref{lambda}. We rescale the interaction function 
	\begin{align}\label{e:scalealpha}
	\tilde{\alpha} = \ve \lambda \alpha,
	\end{align}
	hence individuals interact at a very high rate with others sharing the same knowledge level. We then
	obtain (omitting higher order terms and the tilde notation) the \emph{local mean field game model}
	\begin{subequations}\label{localmfg0}
	\begin{align}
		&\pa_t f(z,t) = -\pa_z\left(f^2(z,t)\alpha(s(z,t))\right), \label{e:burgers0} \\
		& \pa_tV(z,t)-rV(z,t) = -\max_{s \in \mathcal{S}}{\left[U((1-s)z)+\alpha(s(z,t)) f(z,t) \pa_zV(z,t)\right]}.
	\end{align}
	\end{subequations}
	We see that the Boltzmann equation formally converges to a Burgers' type equation. It is well know that the viscous Burgers' equation, which corresponds to the case of a constant learning function $\alpha \equiv 1$, admits travelling wave solutions. We recall that the rescaled original BMFG model, that is the system in log-variables with $k\equiv 1$, has travelling wave solutions in case of a constant learning function $\alpha \equiv 1$. \\
	
	Next we introduce the control variable 
	$$
	v(z,t) = \alpha(s(z,t))f(z,t), \quad s(z,t) = \alpha^{-1} \left(\frac{v(z,t)}{f(z,t)}\right)
	$$
	with $\mathcal{V} := \{v:\, \R_+ \to \R_+\}.$ This reformulation allows us to write \eqref{localmfg} as a generalised potential mean field game with a conservation law constraint.  In particular
	\begin{subequations}\label{localmfg}
		\begin{align}
			&\pa_tf(z,t)+\pa_z(f(z,t)v(z,t)) = 0 , \label{mfgf}\\
			&\pa_tV(z,t) - rV(z,t) = - \max_{v \in \mathcal{V}}{\left[U\left(\left(1-\alpha^{-1} \left(\frac{v(z,t)}{f(z,t)}\right)\right)z\right)+v(z,t)\pa_zV(z,t) \right]}, \label{mfgV} \\
			&f(z,0)=f_I(z), \label{mfgfI} \\
			&V(z,T)=0. \label{mfgVT}
		\end{align}
	\end{subequations}
	This system can be written as an \emph{optimal control problem} or \emph{potential mean field game}
	\begin{equation}\label{opV}
		\max_{v \in \mathcal{V}}{\int_0^T \int_0^\infty e^{-rt} w\left(\frac{v(z,t)}{f(z,t)},z\right) f(z,t) \, dz \, \md t}
	\end{equation}
	subject to 
	\begin{equation}\label{opf}
		\pa_tf(z,t) + \pa_z\left(v(z,t)f(z,t)\right) = 0,
	\end{equation}
	where due to the choice of the utility of an agent's productivity $U$, the function $w(\cdot,\cdot)$ has to fulfil the following differential equation
	\begin{align}\label{c:w}
		w(p,z)- p \pa_p w(p,z)= -U((1-\alpha^{-1}(p))z), \quad \forall \, z \in \R_+.
	\end{align}
	Note that \eqref{opV} is a generalisation of a potential mean field game. There is a well established existence theory for potential mean field games, see \cite{C2013, Benamou2017}, and we believe that some of these results can be generalised for certain utility functions (like the linear utility). However, a general existence result is an open problem that we will address in the future. 
	\begin{rem}
		Solving the maximization problem in the Hamilton-Jacobi-Bellman equation gives the following optimality condition
		$$
			v(z,t)= f(z,t) H^{-1}\left(z^{-1}f(z,t)\pa_zV(z,t)\right), 
		$$
		with $H(r(z,t)):=U'\left(\left(1-\alpha^{-1}\left(r(z,t)\right)\right)z\right)\left(\alpha^{-1}\right)'\left(r(z,t)\right)$ monotonically decreasing due to Assumptions \ref{a:alpha}, \ref{a:U}. We see that  the monotonicity of $\pa_zV(\cdot,t)$ has a direct influence on the monotonicity of the control $v(\cdot,t)$; a similar correlation that we already discussed for the non-local Hamilton-Jacobi-Bellman equation in Remark \ref{r:monB}.
	\end{rem}
	
	We conclude by discussing solutions to equation \eqref{c:w} for different utilities and interaction functions. Differentiating condition \eqref{c:w} with respect to $p$ yields
		\begin{align}\label{e:w}
			-p \pa_p^2 w(p,z) = z U'((1-\alpha^{-1}(p))z)\left(\alpha^{-1}\right)'(p),
		\end{align}
		Then the function $w$, relating to the time discounted running cost of an individual at time $t$
		$$
		\int_0^\infty e^{-rt} w\left(\frac{v(z,t)}{f(z,t)},z\right) f(z,t) \, dz,
		$$
can be computed explicitly in some cases assuming that the interaction function $\alpha$ is of the form
		\begin{align*}
			\alpha(s):=s^{\frac{1}{\eta}}, \quad \eta > 1.
		\end{align*}
		In the case of a linear utility
	 equation \eqref{e:w} is of the form
			$$
				\pa_p^2 w(p,z) = - z\eta p^{\eta-2}.
			$$
			Since $\eta \neq 1$ we find that
			$$
				w(p,z) = -\frac{z}{\eta-1} p^{\eta} +c_1(z)p+c_2(z),
			$$
			where $c_1(z),c_2(z) \in \R$ are suitable integration constants. In case of a logarithmic utility we obtain 
		$$
			\pa_p^2 w(p,z) =- \eta \frac{p^{\eta-2}}{1-p^\eta}.
			$$
				For $\eta=2$ it simplifies to 
				$$
				\pa_p^2 w(p,z)=-\eta \frac{1}{1-p^{2}}.
				$$
				Integration gives 
				$$
				\pa_pw(p,z) = -\frac{\eta}{2} \left(\ln{(p+1)}- \ln{(|p-1|)}\right) + c_1,
				$$
				from which we can conclude
				$$
				w(p,z) = -\eta (p+1) \ln{(p+1)}+(p-1) \ln{(|p-1|)}+c_1(z)p+c_2(z), 
				$$
				where $c_1(z), c_2(z) \in \R$ again describe integration constants. Note that $w$ is not defined for $p \equiv 1$.\\
				We can not give a closed form solution for $w$ in case of an isoelastic utility. 

	\section{Numerical simulations}\label{s:numerics}
	
	In this section we present an iterative solver for \eqref{bmfg} as well as \eqref{localmfg} and
	computational results supporting and exemplifying the presented	analysis.\\

	Consider the computational domain $\mathcal{I} = [0, 10]$ and a fixed time horizon $\mathcal{T} = [0,25]$; both split into intervals of size $\Delta z = 0.01$ and $\Delta t = 0.01$ respectively. We will use superscripts to refer to the discrete in time functions, for example $\mathbf{f}^n = f(z,t^n)$ where $t^n = n \Delta t$.\\
		We choose an initial distribution of agents, which has a Pareto tail
	\begin{align*}
	    f_I(z) = \frac{\beta}{(z+1)^\beta}
	\end{align*}
	and set the interaction function $\alpha(s) = 2\sqrt{s}$. Furthermore we set the temporal discount factor to $r=0.05$. We will choose these initial conditions, interaction function and discount factor for all computational experiments if not stated otherwise. \\
	
	\noindent The numerical solver is based on the following fixed point iteration:
	\begin{enumerate}[nosep]
		\item \textit{Explicit in time finite difference discretisation of the
			Boltzmann equation \eqref{bmfgf}:}  Given $\mathbf{A}^n	=\alpha(\mathbf{S}^n)$ for $n=0, \ldots N$, starting with $\mathbf{S}^0 \equiv 1$, solve the Boltzmann
		equation forward in time:
		\begin{align*}
			\mathbf{f}^{n+1} = \mathbf{f}^n + \Delta t \, \mathbf{f}^n  \int_0^z \mathbf{A}^n k(z,y)
			\mathbf{f}^n \md y - \Delta t\, \mathbf{A}^n \mathbf{f}^n(y) \int_z^{\bar{z}} k(y,z)
			\mathbf{f}^n \md y. 
		\end{align*}
		We calculate the integrals on the right-hand-side using the trapezoidal rule.
		\item \textit{Policy iteration to solve the HJB equation \eqref{bmfgV}:} Let
		$\mathbf{V}^n$ and $\mathbf{S}^n$ denote the time-discrete
		solution to \eqref{bmfgV} at time $t^n$ and let $\mathbf{A}^n :=
		\alpha(\mathbf{S}^n)$.
		\begin{enumerate}[nosep]
			\item Given the terminal condition $\mathbf{V}^{N}\equiv 0$ and $\mathbf{A}^{n}$, $\mathbf{S}^{n}$
			and $\mathbf{f}^{n}$ for every $n=0,1 \ldots ,N$ solve the HJB backward in time 
			\begin{align*}
				\frac{\mathbf{V}^{n+1}-\mathbf{V}^n}{\Delta t} - r \mathbf{V}^{n} =
				-(1-\mathbf{S}^{n} ) z  - \mathbf{A}^{n} \int_z^{\bar{z}}
				(\mathbf{V}^{n}(y)-\mathbf{V}^{n}(z) )\mathbf{f}^{n}(y) k(y,z) \md y
			\end{align*}
			or equivalently
			\begin{align*}
				-\left(1+r\Delta t\right)\mathbf{V}^{n} + \Delta t \, \mathbf{A}^{n} \int_z^{\bar{z}}
				(\mathbf{V}^{n}(y)-\mathbf{V}^{n}(z) )\mathbf{f}^{n}(y) k(y,z) \md y =
				-\mathbf{V}^{n+1} - \Delta t(1-\mathbf{S}^{n} ) z.
			\end{align*}
			\item Given $\mathbf{V}^n$ and $\mathbf{f}^n$ for $n=0, \ldots N$,
			update $\mathbf{S}^n$ by computing \newline
			$\mathbf{B}^n = \int_z^{\bar{z}}
			(\mathbf{V}^{n}(y)-\mathbf{V}^n(z))\mathbf{f}^n(y) k(y,z)dy$ for
			all time $t^n$, $n=0, \dots N$ and calculating (elementwise) for the linear utility:
			\begin{itemize}[nosep]
				\item $\mathbf{S}^n=0$ for $\mathbf{B}^n \leq 0$.
				\item $\mathbf{S}^n =1$ if $\mathbf{B}^n \mathbf{A}(1) \leq z$
				\item $\alpha'(\mathbf{S}^n) =\frac{z}{\mathbf{B}^n}$.
			\end{itemize}
		\end{enumerate}
	\end{enumerate}
	Note that the update of $S$ in (8b) has to be adapted in case of the isoelastic and logarithmic utility.

	\subsection{Polynomial learning kernel}
	
		\paragraph{Linear utility:}

	We start by illustrating the behaviour of solutions in case of a polynomial learning kernel $k$ as defined in \eqref{e:kpoly} with parameters 
	\begin{align*}
	    \delta =0.5 \text{ and } \kappa = 1
	\end{align*}
	and a linear utility $U$ given by \eqref{e:ulin}. Note that these parameter satisfy condition \eqref{kappa_delta}; we therefore expect monotonicity of $V$ and $S$. Figure \ref{f:poly_nv} shows the expected behaviour of the learning function $S$ as well as the value function $V$ at different times. We see that agent choose to spend their entire time on learning, therefore $S\equiv 1$, up to a certain knowledge level $\tilde{z}$. The value of $\tilde{z}$ decreases as time increases.
	\begin{figure}[h!]
		\centering
		\subfigure[$V$]{\includegraphics[scale=0.4]{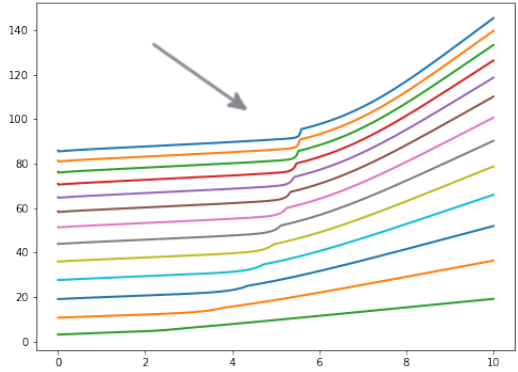}}
		\hspace*{1em}
		\subfigure[$S$]{\includegraphics[scale=0.4]{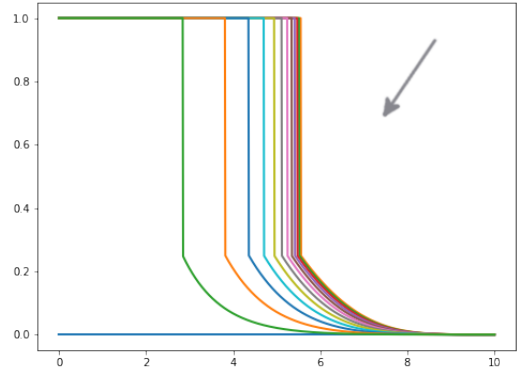}}
		\caption{Solution to \eqref{bmfg} with linear utility and a polynomial learning kernel \eqref{e:kpoly} with $\delta = 0.5$ and $\kappa = 1$, which satisfies \eqref{kappa_delta}. The direction of the arrow indicates the increase of time.}
		\label{f:poly_nv}
	\end{figure}
	
	\noindent The numerically observed monotonicity behaviour does not change even if we choose parameters violating condition \eqref{kappa_delta}. For example, if we set
	\begin{align*}
	    \delta = 0.1 \text{ and } \kappa = 3
	\end{align*}
	solutions are still monotone, see Figure \ref{f:poly_v}. 
	We observe that $S$ is still non-increasing, yet the incentive to learn is lower than in the previous example, as the rate of learning is smaller and decay faster.
		\begin{figure}[h!]
		\centering
		\subfigure[$V$]{\includegraphics[scale=0.4]{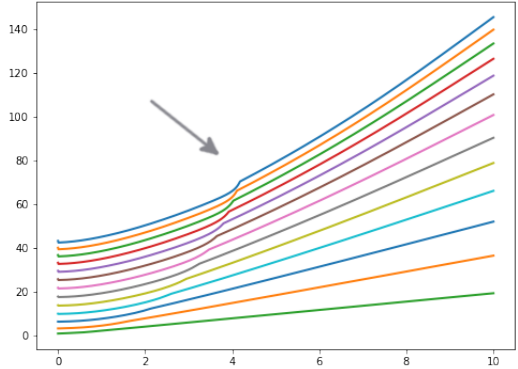}}
		\hspace*{1em}
		\subfigure[$S$]{\includegraphics[scale=0.4]{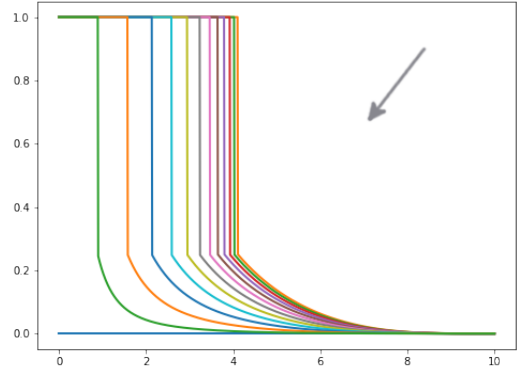}}
		\caption{Solution to \eqref{bmfg} with linear utility and a polynomial learning kernel \eqref{e:kpoly} with $\delta = 0.1$ and $\kappa = 3$, which violates \eqref{kappa_delta}. The direction of the arrow indicates the increase of time.}
		\label{f:poly_v}
	\end{figure}

	\paragraph{Isoelastic utility} Next we use the isoelastic utility \eqref{e:ucrra} with $\zeta = 0.5$, in particular 
	$$U(p) = \sqrt{p}.
$$
Figure \ref{f:iso} shows the value function $V$ and the optimally allocated fraction of time $S$ for a polynomial interaction kernel with $\delta = 0.5$ and $k=1$. We observe that $S$ only takes the value $1$ at $z=0$, and decreases for all $z>0$. This can be explained by the fact that we never satisfy the conditions stated in the first bullet point in the proof of Corollary \ref{c:Smon}, in particular $U'(0) < \infty$ holds for the linear utility only.

	\begin{figure}[h!]
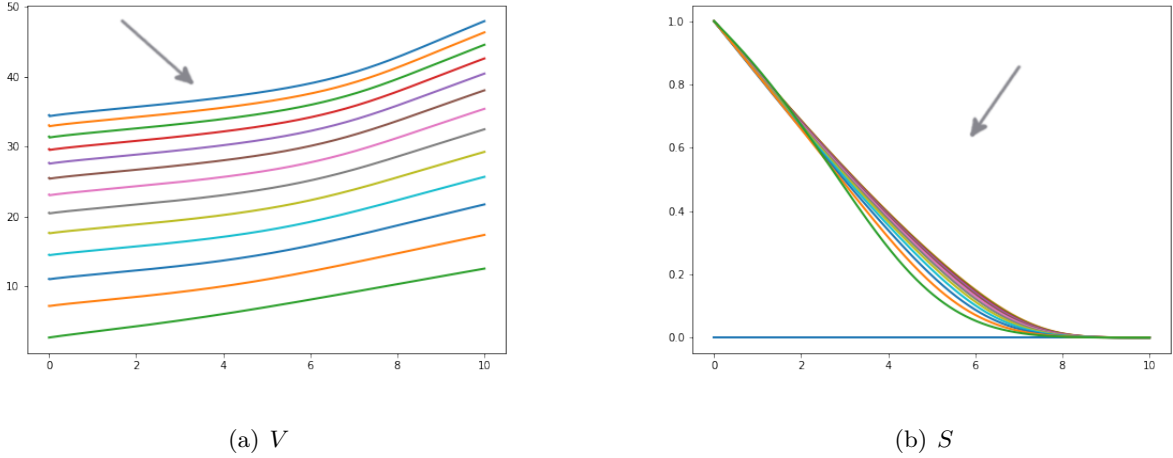

		\centering
		\subfigure[$V$]{\includegraphics[scale=0.4]{V_sqrt_utility.png}}
		\hspace*{1em}
		\subfigure[$S$]{\includegraphics[scale=0.4]{S_sqrt_utility.png}}
		\caption{Solution to \eqref{bmfg} with square root utility and a polynomial learning kernel \eqref{e:kpoly} with $\delta = 0.5$ and $\kappa = 1$. The direction of the arrow indicates the increase of time.}
		\label{f:iso}
	\end{figure}
	
	\paragraph{Logarithmic utility} Lucas and Moll \cite{ML} reported non-monotonic behaviour of solutions to \eqref{bmfg} for logarithmic utilities of the form \eqref{e:ulog}. We recall that most of the presented analytical results exclude log utilities. In the computations we consider the following regularised version of \eqref{OP}:
	\begin{align}
	    \max_{s \in S} \left[\log((1-s)(z+\varepsilon) + \alpha(s) B\right]
	\end{align}
	with $\varepsilon \ll 1$. Then the first-order optimality condition for $S$ and $\alpha = \alpha_0 \sqrt{s}$ reads as
	\begin{align*}
	    \frac{-1}{(1-s)} + \frac{\alpha_0}{2\sqrt{s}} B = 0.
	\end{align*}
	Hence the maximiser is independent of the regularisation parameter $\varepsilon$. Figure shows the utility $V$ as well as the optimal allocated time. We observe that the value function does indeed take negative values (as stated in Lemma \ref{l:Vposln}), and that we cannot expect monotonicity of solutions.
	\begin{figure}[h!]
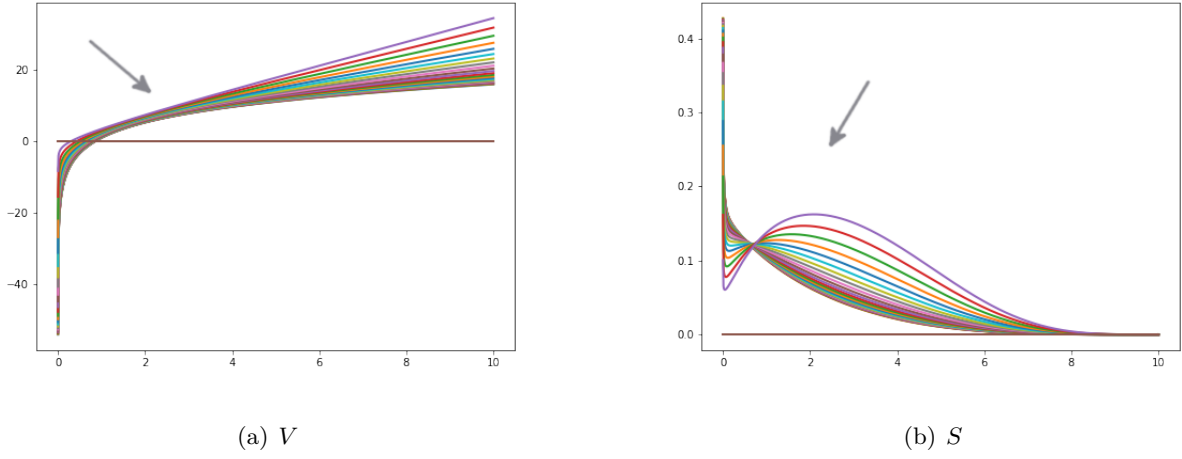

		\centering
		\subfigure[$V$]{\includegraphics[scale=0.4]{V_log_utility.png}}
		\hspace*{1em}
		\subfigure[$S$]{\includegraphics[scale=0.4]{S_log_utility.png}}
		\caption{Solution to \eqref{bmfg} with logarithmic utility and a polynomial learning kernel \eqref{e:kpoly} with $\delta = 0.05$ and $\kappa = 1.5$. The direction of the arrow indicates the increase of time. }
		\label{f:log_v}
	\end{figure}

	\subsection{Exponential learning kernel}
	We continue by considering exponential learning kernels of the form \eqref{e:kexp} with $\kappa = 1$. Although we were not able to show monotonicity of solutions on $\mathcal{I}$, we observe a very similar behaviour as for the polynomial learning kernel in Figure \ref{f:bmfgexp}. The faster decay of the learning rate leads to a faster decay of $S$; however, the qualitative behaviour of solutions is very similar.
		\begin{figure}[h!]
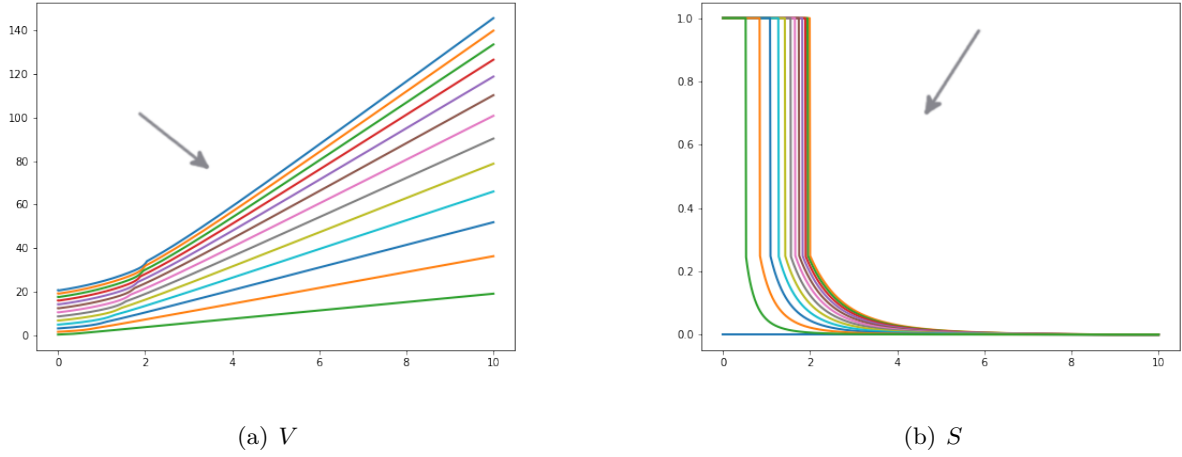

		\centering
		\subfigure[$V$]{\includegraphics[scale=0.4]{V_lin_exp_v.png}}
		\hspace*{1em}
		\subfigure[$S$]{\includegraphics[scale=0.4]{S_lin_exp_v.png}}
		\caption{Solution to \eqref{bmfg} with linear utility and a exponential learning kernel \eqref{e:kexp} with $\kappa = 1$. The direction of the arrow indicates the increase of time.}
		\label{f:bmfgexp}
	\end{figure}

	\subsection{Local mean-field games}
	We conclude by showing first computational results, to illustrate solutions to \eqref{localmfg0}. In doing so we replace Step 1 in the fixed point algorithm with a solver for a Burger's equation with general mobility. In particular 
	\begin{enumerate}
	\item \textit{Explicit in time finite difference discretisation of Burger's equation \eqref{e:burgers0}}, see \cite{T2001}: Given $\mathbf{A}^n = \alpha(\mathbf{S}^n)$ for $n=0, \ldots N$ solve Burger's equation forward in time
		\begin{align*}
			\mathbf{f}^{n+1}_j = \mathbf{f}^n_j + \lambda \left(\bar{\mathbf{A}}^n_{j+\frac{1}{2}} \mathbf{J}_{j+\frac{1}{2}} - \bar{\mathbf{A}}^n_{j-\frac{1}{2}} \mathbf{J}_{j - \frac{1}{2}}  \right)
		\end{align*}
	where $\lambda = \frac{\Delta t}{\Delta x}$, $\mathbf{J}_{j+\frac{1}{2}}$ is a Godunov flux and $\bar{\mathbf{A}}$ the average of $\mathbf{A}$ over the cell $I_j = [x_j - \frac{\Delta x}{2}, x_j + \frac{\Delta x}{2}]$. Step 2 of the fixed point iteration is unchanged, except for the definition of $B$, which is given by
	\begin{align*}
	    \mathbf{B}^n = \mathbf{D}(\mathbf{V}^n) \mathbf{f}^2,
	\end{align*}
	where $\mathbf{D}$ is the discrete gradient operator.\\
	Figure \ref{f:lmfg} shows the solution to the local MFG model, using the same parameters as in the previous examples. We observe a very similar behaviour to the BMFG - such as monotonicity of solutions - matter of future research.
	\end{enumerate}
		\begin{figure}[h!]
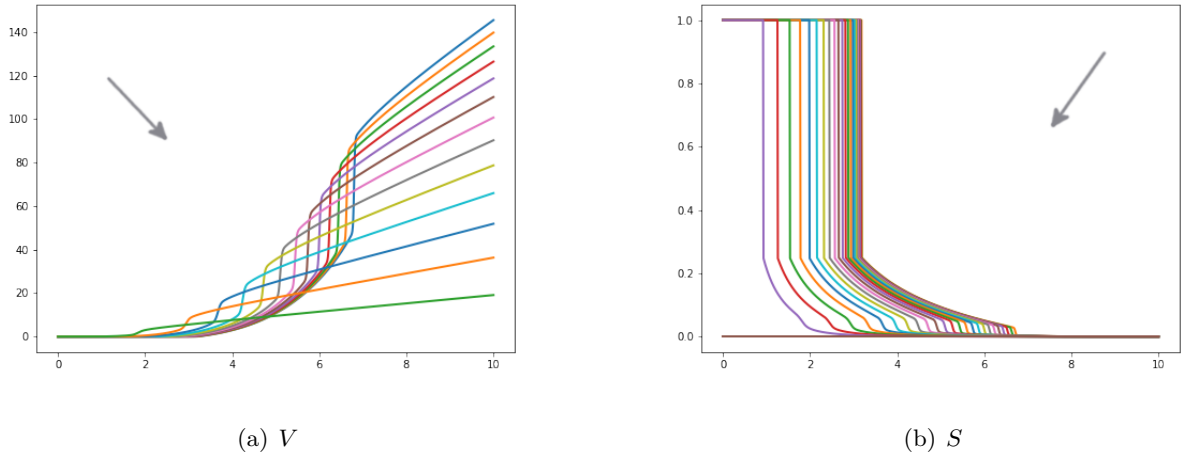

		\centering
		\subfigure[$V$]{\includegraphics[scale=0.4]{lmfg_V_lin_v.png}}
		\hspace*{1em}
		\subfigure[$S$]{\includegraphics[scale=0.4]{lmfg_S_lin_v.png}}
		\caption{Solution to \eqref{localmfg0} with linear utility. The direction of the arrow indicates the increase of time. }
		\label{f:lmfg}
	\end{figure}
	\newpage 
	\textbf{Acknowledgements:}
	\textit{M.B. acknowledges support from DESY (Hamburg, Germany), a member of the Helmholtz Association HGF, and the German Science Foundation (DFG) through CRC TR 154, subproject C06. L.K. received funding by a grant from the FORMAL team at ISCD - Sorbonne Université.}
	
	\printbibliography

@Control{biblatex-control,
  options = {3.7:0:0:1:0:1:1:0:0:0:0:1:3:1:3:1:0:0:3:1:79:+:+:nty},
}

@article{T2001,
	author = {John D. Towers},
	journal = {SIAM Journal on Numerical Analysis},
	number = {2},
	pages = {681--698},
	publisher = {Society for Industrial and Applied Mathematics},
	title = {Convergence of a Difference Scheme for Conservation Laws with a Discontinuous Flux},
	urldate = {2022-08-03},
	volume = {38},
	year = {2001}}

@article{T2006,
	author = {Toscani, Giuseppe},
	journal = {Communications in mathematical sciences},
	number = {3},
	pages = {481--496},
	publisher = {International Press of Boston},
	title = {Kinetic models of opinion formation},
	volume = {4},
	year = {2006}}

@article{BCMW2013,
	author = {Burger, Martin and Caffarelli, Luis and Markowich, Peter and Wolfram, Marie-Therese},
	journal = {Royal Society of London Proceedings Series A},
	month = {02},
	title = {On a Boltzmann type price formation model},
	volume = {469},
	year = {2013}}

@article{JJ2015,
	author = {Jabin, Pierre-Emmanuel and Junca, St{\'e}phane},
	journal = {SIAM Journal on Applied Mathematics},
	number = {2},
	pages = {420-442},
	title = {A Continuous Model For Ratings},
	volume = {75},
	year = {2015}}

@article{dimarco2021kinetic,
	author = {Dimarco, G and Perthame, B and Toscani, G and Zanella, M},
	journal = {Journal of Mathematical Biology},
	number = {1},
	pages = {1--32},
	publisher = {Springer},
	title = {Kinetic models for epidemic dynamics with social heterogeneity},
	volume = {83},
	year = {2021}}

@article{During:strongleaders,
	author = {Bertram D{\"u}ring and Peter Markowich and {Jan-Frederik} Pietschmann and {Marie-Therese} Wolfram},
	journal = {Proc. R. Soc. A.},
	pages = {3687--3708},
	title = {{B}oltzmann and {F}okker-{P}lanck equations modelling opinion formation in the presence of strong leaders},
	volume = {465},
	year = {2009}}

@article{pareschi2014wealth,
	author = {Pareschi, Lorenzo and Toscani, Giuseppe},
	journal = {Philosophical Transactions of the Royal Society A: Mathematical, Physical and Engineering Sciences},
	number = {2028},
	pages = {20130396},
	publisher = {The Royal Society Publishing},
	title = {Wealth distribution and collective knowledge: a {B}oltzmann approach},
	volume = {372},
	year = {2014}}

@article{CC2000,
	author = {Chakraborti, A. and Chakrabarti, B.},
	journal = {Eur. Phys. J. B},
	pages = {167--170},
	title = {Statistical mechanics of money: how saving propensity affects its distribution},
	volume = {17},
	year = {2000}}

@article{C2002,
	author = {Chakraborti, A.},
	journal = {International Journal of Modern Physics C},
	number = {10},
	pages = {1315-1321},
	title = {Distributions of money in model markets of economy},
	volume = {13},
	year = {2002}}

@article{MT2008,
	author = {Matthes, D. and Toscani, G.},
	journal = {Journal of Statistical Physics},
	number = {6},
	pages = {1087--1117},
	publisher = {Springer},
	title = {On steady distributions of kinetic models of conservative economies},
	volume = {130},
	year = {2008}}

@article{BB2000,
	author = {J.D. Benamou and Y. Brenier},
	journal = {Numer. Math.},
	pages = {375-393},
	title = {A computational fluid mechanics solution to the Monge-Kantorovich mass transfer problem},
	volume = {84}}

@inbook{Benamou2017,
	address = {Cham},
	author = {Benamou, J.-D. and Carlier, G. and Santambrogio, F.},
	booktitle = {Active Particles, Volume 1 : Advances in Theory, Models, and Applications},
	editor = {Bellomo, Nicola and Degond, Pierre and Tadmor, Eitan},
	isbn = {978-3-319-49996-3},
	pages = {141--171},
	publisher = {Springer International Publishing},
	title = {Variational Mean Field Games},
	url = {https://doi.org/10.1007/978-3-319-49996-3_4},
	year = {2017},
	bdsk-url-1 = {https://doi.org/10.1007/978-3-319-49996-3_4}}

@article{CHM2006,
	author = {P. E. Caines and M. Huang and R. P. Malham{\'e}},
	journal = {Commun. Inf. Syst.},
	pages = {221-252},
	title = {Large population stochastic dynamic games: closed-loop McKean-Vlasov systems and the Nash certainty equivalence principle},
	volume = {6},
	year = {2006}}

@article{LL2007,
	author = {Lasry, J.-M. and Lions, P.-L.},
	journal = {Japanese Journal of Mathematics},
	title = {Mean field games},
	volume = {2},
	year = {2007}}

@book{PT2013,
	author = {Pareschi, L. and Toscani, G.},
	publisher = {OUP Oxford},
	title = {Interacting multiagent systems: kinetic equations and Monte Carlo methods},
	year = {2013}}

@inbook{CP2020,
	address = {Cham},
	author = {Cardaliaguet, P. and Porretta, A.},
	booktitle = {Mean Field Games: Cetraro, Italy 2019},
	editor = {Cardaliaguet, Pierre and Porretta, Alessio},
	isbn = {978-3-030-59837-2},
	pages = {1--158},
	publisher = {Springer International Publishing},
	title = {An Introduction to Mean Field Game Theory},
	url = {https://doi.org/10.1007/978-3-030-59837-2_1},
	year = {2020},
	bdsk-url-1 = {https://doi.org/10.1007/978-3-030-59837-2_1}}

@book{BD,
	author = {M. Bardi and I. Capuzzo Dolcetta},
	date-added = {2021-06-08 19:49:12 +0200},
	date-modified = {2021-06-09 14:32:16 +0200},
	publisher = {Birkh{\"a}user},
	title = {Optimal Control and Viscosity Solutions of Hamilton- Jacobi-Bellman Equations},
	year = {1997}}

@article{BLW2,
	author = {Burger, M. and Lorz, A. and Wolfram, M.-T.},
	date-added = {2021-06-07 11:40:02 +0200},
	date-modified = {2021-06-07 11:43:19 +0200},
	journal = {Kinetic and Related Models},
	month = {02},
	title = {Balanced growth path solutions of a Boltzmann mean field game model for knowledge growth},
	volume = {10},
	year = {2016}}

@article{PRV,
	author = {Papanicolaou, G. and Ryzhik, L. and Velcheva, K.},
	date-added = {2021-06-07 11:34:53 +0200},
	date-modified = {2021-06-07 11:37:14 +0200},
	issue = {10},
	journal = {Nonlinearity},
	title = {Traveling waves in a mean field learning model},
	volume = {34},
	year = {2021}}

@article{PR,
	author = {Porretta, A. and Rossi, L.},
	date-added = {2021-06-07 11:31:50 +0200},
	date-modified = {2021-06-07 11:34:37 +0200},
	journal = {Ann. Inst. H. Poincare},
	title = {Traveling waves for a nonlocal KPP equation and mean-field game models of knowledge diffusion},
	year = {2022}}

@article{ML,
	author = {R. E. Lucas Jr., B. Moll},
	date-added = {2021-06-07 08:49:01 +0200},
	date-modified = {2021-06-07 11:45:00 +0200},
	journal = {Journal of Political Economy},
	month = {02},
	number = {1},
	title = {Knowledge Growth and the Allocation of Time},
	volume = {122},
	year = {2014}}

@article{BLW,
	author = {Burger, M. and Lorz, A. and Wolfram, M.-T.},
	date-added = {2021-06-07 08:45:43 +0200},
	date-modified = {2021-06-07 08:46:54 +0200},
	journal = {SIAM Journal on Applied Mathematics},
	month = {03},
	title = {On a Boltzmann Mean Field Model for Knowledge Growth},
	volume = {76},
	year = {2015}}

@article{S1916,
	author = {Von Smoluchowski, M},
	journal = {Z. Phys.},
	pages = {557-585},
	title = {{Drei Vortrage uber Diffusion. Brownsche Bewegung und Koagulation von Kolloidteilchen}},
	url = {http://cds.cern.ch/record/473379},
	volume = {17},
	year = {1916},
	bdsk-url-1 = {http://cds.cern.ch/record/473379}}

@article{FL2005,
	author = {Fournier, Nicolas and Lauren{\c{c}}ot, Philippe},
	journal = {Communications in Mathematical Physics},
	number = {3},
	pages = {589--609},
	publisher = {Springer},
	title = {Existence of self-similar solutions to Smoluchowski's coagulation equation},
	volume = {256},
	year = {2005}}

@article{NV2013,
	author = {Niethammer, Barbara and Vel{\'a}zquez, Juan J.L.},
	journal = {Communications in Mathematical Physics},
	number = {2},
	pages = {505--532},
	publisher = {Springer},
	title = {Self-similar solutions with fat tails for Smoluchowski's coagulation equation with locally bounded kernels},
	volume = {318},
	year = {2013}}

@report{C2013,
	author = {P. Cardaliaguet},
	date-modified = {2023-06-08 12:08:00 +0200},
	title = {Notes on Mean Field Games},
	url = {https://www.ceremade.dauphine.fr/~cardaliaguet/MFG20130420.pdf},
	year = {2013},
	bdsk-url-1 = {https://cvgmt.sns.it/media/doc/paper/4646/Lecture%40Notes%40CIME%40MFG.pdf}}
	
\end{document}